\definecolor{rouge}{rgb}{0.7,0.00,0.00}
\definecolor{vert}{rgb}{0.00,0.5,0.00}
\definecolor{bleu}{rgb}{0.00,0.00,0.8}
\newtheorem{theorem}{Theorem}[section]
\newtheorem*{theorem*}{Theorem}
\newtheorem{lemma}[theorem]{Lemma}
\newtheorem{proposition}[theorem]{Proposition}
\newtheorem{condition}{Condition}
\newtheorem{conditionA}{A\kern-0.1mm}
\theoremstyle{definition}
\newtheorem{example}[theorem]{Example}
\newtheorem{remark}[theorem]{Remark}
\def \eref#1{\hbox{(\ref{#1})}}
\numberwithin{equation}{section}
\def\geq{\geqslant}
\def\leq{\leqslant}
\def\RR{\mathbb{R}}
\def\PP{\mathbb{P}}
\def\EE{\mathbb{E}}
\def \eref#1{\hbox{(\ref{#1})}}
\def\EE{\mathbb{ E}}
\def\ep{{\epsilon}}
\begin{document}

\title[Strong convergence order for slow-fast MVSDE]
{Strong convergence order for slow-fast McKean-Vlasov stochastic differential equations}

\author{Michael R\"{o}ckner}
\curraddr[R\"{o}ckner, M.]{Fakult\"{a}t f\"{u}r Mathematik, Universit\"{a}t Bielefeld, D-33501 Bielefeld, Germany, and Academy of Mathematics and Systems Science,
  Chinese Academy of Sciences (CAS), Beijing, 100190, China}
\email{roeckner@math.uni-bielefeld.de}

\author{Xiaobin Sun}
\curraddr[Sun, X.]{ School of Mathematics and Statistics, Jiangsu Normal University, Xuzhou, 221116, China}
\email{xbsun@jsnu.edu.cn}

\author{Yingchao Xie}
\curraddr[Xie, Y.]{ School of Mathematics and Statistics, Jiangsu Normal University, Xuzhou, 221116, China}
\email{ycxie@jsnu.edu.cn}

\begin{abstract}
In this paper, we consider the averaging principle for a class of McKean-Vlasov stochastic differential equations with slow and fast time-scales. Under some proper assumptions on the coefficients, we first prove that the slow component strongly converges to the solution of the corresponding averaged equation with convergence order $1/3$ using the approach of time discretization. Furthermore, under stronger regularity conditions on the coefficients, we use the technique of Poisson equation to improve the order to $1/2$, which is the optimal order of strong convergence in general.
\end{abstract}

\date{\today}
\subjclass[2000]{ Primary 60H10; Secondary 34F05}
\keywords{Averaging principle; McKean-Vlasov stochastic differential equations; Slow-fast; Poisson equation; Strong convergence rate.}

\maketitle

\section{Introduction}
Let $\{W^{1}_t\}_{t\geq 0}$ and $\{W^{2}_t\}_{t\geq 0}$ be mutually independent $d_1$ and $d_2$ dimensional standard Brownian motions on a complete probability space $(\Omega, \mathscr{F}, \mathbb{P})$ and $\{\mathscr{F}_{t},t\geq0\}$ be the natural filtration generated by $W^{1}_t$ and $W^{2}_t$. Let the following maps $b=b(t,x,\mu,y)$, $\sigma=\sigma(t,x,\mu)$, $f=f(t,x,\mu,y)$ and $g=g(t,x,\mu,y)$ be given:
\begin{eqnarray*}
&&b: [0, \infty)\times\RR^n\times\mathscr{P}_2\times\RR^m \rightarrow \RR^{n};\\
&& \sigma: [0, \infty)\times\RR^n\times \mathscr{P}_2\rightarrow \RR^{n\times d_1};\\
&&f:[0, \infty)\times\RR^n\times\mathscr{P}_2\times\RR^m\rightarrow \RR^{m};\\
&&g:[0, \infty)\times\RR^n\times\mathscr{P}_2\times\RR^m\rightarrow \RR^{m\times d_2}
\end{eqnarray*}
such that $b$, $\sigma$, $f$ and $g$ are continuous in $(t,x,\mu,y)\in [0, \infty)\times\RR^n\times\mathscr{P}_2\times\RR^m$, where $\mathscr{P}_2$ is defined by
$$
\mathscr{P}_2:=\Big\{\mu\in \mathscr{P}: \mu(|\cdot|^2):=\int_{\RR^n}|x|^2\mu(dx)<\infty\Big\},
$$
where $\mathscr{P}$ is the set of all probability measure on $(\RR^n, \mathscr{B}(\RR^n))$. Then $\mathscr{P}_2$ is a polish space under the $L^2$-Wasserstein distance, i.e.,
$$
\mathbb{W}_2(\mu_1,\mu_2):=\inf_{\pi\in \mathscr{C}_{\mu_1,\mu_2}}\left[\int_{\RR^n\times \RR^n}|x-y|^2\pi(dx,dy)\right]^{1/2}, \quad \mu_1,\mu_2\in\mathscr{P}_2,
$$
where $\mathscr{C}_{\mu_1,\mu_2}$ is the set of all couplings for $\mu_1$ and $\mu_2$.

\vspace{0.1cm}
In this paper, we consider the following slow-fast McKean-Vlasov stochastic differential  equations (SDEs):
\begin{equation}\left\{\begin{array}{l}\label{Equation}
\displaystyle
d X^{\ep}_t = b(t, X^{\ep}_t, \mathscr{L}_{X^{\ep}_t}, Y^{\ep}_t)dt+\sigma(t, X^{\ep}_t, \mathscr{L}_{X^{\ep}_t})d W^{1}_t,\quad X^{\ep}_0=x\in\RR^n, \\
d Y^{\ep}_t =\frac{1}{\ep}f(t, X^{\ep}_t, \mathscr{L}_{X^{\ep}_t}, Y^{\ep}_t)dt+\frac{1}{\sqrt{\ep}}g(t, X^{\ep}_t, \mathscr{L}_{X^{\ep}_t}, Y^{\ep}_t)d W^{2}_t,\quad Y^{\ep}_0=y\in\RR^m,
\end{array}\right.
\end{equation}
where $\mathscr{L}_{X^{\ep}_t}$ is the law of $X^{\ep}_t$, $\ep$ is a small and positive parameter describing the ratio of the time scale between the slow component $X^{\ep}_t\in\RR^n$ and fast component $Y^{\ep}_t\in\RR^m$.

\vspace{0.1cm}
The averaging principle has a long and rich history in multiscale models, which have wide applications in material sciences, chemistry,
fluid dynamics, biology, ecology, climate dynamics etc., see e.g., \cite{BR,EW,ELV,HKWJ,MHR,WTRY16} and references therein. The averaging principle is essential to describe the asymptotic behavior of the slow component as $\ep\to 0$, i.e., the slow component will convergence to the so-called averaged equation. Bogoliubov and Mitropolsky \cite{BM} first studied the averaging principle for deterministic systems. The averaging principle for SDEs was first studied by  Khasminskii in \cite{K1}, see e.g., \cite{GKK,G,GL,L,LRSX,V0} for further developments. The averaging principle for slow-fast stochastic partial differential equations (SPDEs) was first investigated by Cerrai and Freidlin in \cite{CF}, see e.g., \cite{B1,C1,C2,CL,DSXZ,FLL,FWLL,GP,GP1,GP2,LRSX2,WR12,WRD12} for further developments.

\vspace{0.1cm}
The McKean-Valsov SDEs (also called distribution dependent SDEs) describe stochastic systems whose evolution is determined by both the microcosmic location and the macrocosmic distribution of the particle. The time marginal laws of the solution of such SDEs satisfies a nonlinear Fokker-Planck-Kolmogorov equation. The existence and uniqueness of weak and strong solutions have been studied intensively (see \cite{MV, WFY} and references therein). Further properties, such as the Harnack inequality or the Bismut formula for the Lions Derivative have been investigated in \cite{WFY} and \cite{RW} respectively. However, to the authors' knowledge, this paper is the first in which the averaging principle for two-time scale distribution dependent SDEs is considered. 

\vspace{0.1cm}
For numerical purposes, however, only studying the strong convergence of the slow component to the corresponding averaged equation is not enough, since in addition one needs to know the rate of convergence. Hence, the main purpose of our paper is to study the strong convergence rate for two-time scale distribution dependent SDEs. More precisely, one tries to find the largest possible $\alpha>0$ such that
\begin{eqnarray}
\sup_{t\in [0, T]}\mathbb{E}|X^{\ep}_t-\bar{X}_t|^2\leq C\ep^{\alpha},\label{main result}
\end{eqnarray}
where $C$ is a constant depending on $T, |x|, |y|$, and $\bar{X}$ is the solution of the corresponding averaged equation  (see Eq. \eref{1.3} below).

\vspace{0.1cm}
In the distribution-independent case, the strong convergence rate for two-time scale stochastic system has been studied in a number of papers (see e.g., \cite{GKK,G, L,RSX} for the finite dimensional case, and \cite{B1, B2} for the infinite dimensional case). The approach based on Khasminskii's technique of time discretization is often used to study the strong convergence rate (see \cite{B1,GKK,G, L}). Recently, the technique of Poisson equation has been used to study the strong convergence rate in \cite{B2,RSX}, and the optimal convergence order was obtained in general. Motivated by this, in this paper we will use the techniques of time discretization and Poisson equation to study the strong convergence rate for two-time scale distribution dependent SDEs separately. More precisely, under some proper assumptions on the coefficients, we use the technique of time discretization to obtain the convergence order $1/3$, which is however usually not the optimal order. It turns out that under some stronger assumptions  on the coefficients, the optimal convergence order $1/2$ can indeed be obtained by the method of Poisson equation.

\vspace{0.1cm}
If applying the technique of Poisson equation (see \cite{PV1,PV2,RSX}) to prove our main result, the main difficulty is to analyse the regularity of the solution $\Phi(t,x,\mu,y)$ of the corresponding Poisson equation with respect to ($w.r.t.$) the parameter $\mu$. Indeed, this method highly depends on the regularity of $\Phi$ $w.r.t.$ parameters. However, due to the coefficients dependence on the distribution, $\Phi$ will also depend on the distribution $\mu$. Unlike as for classical SDEs, we have to apply It\^o's formula to $\Phi$ composed with the process $(t, X^{\ep}_t,\mathscr{L}_{X^{\ep}_t}, Y^{\ep}_t)$, which in particular, means that we have to differentiate in the measure $\mu$. As a consequence, some additional terms involving the Lions derivative of $\Phi$, so we have to estimate the regularity of $\Phi$ $w.r.t.$ the parameter  $\mu$ carefully.

\vspace{0.1cm}
The paper is organized as follows. In the next section, we introduce some notation and assumptions that we use throughout the paper, and present out the main results. Sections 3 and 4 are devoted to proving the strong convergence rate by using the techniques of time discretization and Poisson equation respectively. We give an example in Section 5. In the Appendix, we give the detailed  proof of the existence and uniqueness of solutions for our system and prove some important estimates.

\vspace{1mm}
We note that throughout this paper $C$ and $C_T$  denote positive constants which may change from line to line, where the subscript $T$ is used to emphasize that the constant depends on $T$.

\section{Notations and main results}\label{sec.prelim}

Now, we first remind the reader of the definition of differentiability on the Wasserstein space. Following the idea in \cite[Section 6]{C}, for $u: \mathscr{P}_2\rightarrow \RR$ we denote by $U$ its "extension" to $L^2(\Omega, \PP;\RR^n)$ defined by
$$
U(X):=u(\mathscr{L}_{X}),\quad X\in L^2(\Omega,\PP;\RR^n).
$$
Then we say that $u$ is differentiable at $\mu\in\mathscr{P}_2$ if there exists $X\in L^2(\Omega,\PP;\RR^n)$ such that $\mathscr{L}_{X}=\mu$ and $U$ is Fr\'echet differentiable at $X$. By Riesz' theorem, the Fr\'echet derivative $DU(X)$, viewed as an element of $L^2(\Omega,\PP;\RR^n)$, can be represented as
$$
DU(X)=\partial_{\mu}u(\mathscr{L}_{X})(X),
$$
where $\partial_{\mu}u(\mathscr{L}_{X}):\RR^n\rightarrow \RR^n$, which is called Lions derivative of $u$ at $\mu= \mathscr{L}_{X}$. Moreover, $\partial_{\mu}u(\mu)\in L^2(\mu;\RR^n)$, for $\mu\in\mathscr{P}_2$. Furthermore, if $\partial_{\mu}u(\mu)(z):\RR^n\rightarrow \RR^n$ is differentiable at $z\in\RR^n$, we denote its derivative by $\partial_{z}\partial_{\mu}u(\mu)(z):\RR^n\rightarrow \RR^n\times\RR^n$.

\vspace{0.1cm}
Let $|\cdot|$ be the Euclidean vector norm, $\langle\cdot, \cdot\rangle$ be the Euclidean inner product and $\|\cdot\|$ be the matrix norm or the operator norm if there is no confusion possible. We call a vector-valued, or matrix-valued function $u(\mu)=(u_{ij}(\mu))$ differentiable at $\mu\in\mathscr{P}_2$, if its all its components are  differentiable at $\mu$, and set $\partial_{\mu}u(\mu):=(\partial_{\mu}u_{ij}(\mu))$ and $\|\partial_{\mu}u(\mu)\|^2_{L^2(\mu)}:=\sum_{i,j}\int_{\RR^n}|\partial_{\mu}u_{ij}(\mu)(z)|^2\mu(dz)$. Furthermore, we call $\partial_{\mu}u(\mu)(z)$ differentiable at $z\in\RR^n$, if all its components are differentiable at $z$, and set $\partial_{z}\partial_{\mu}u(\mu)(z):=(\partial_{\mu}u_{ij}(\mu)(z))$ and $\|\partial_{z}\partial_{\mu}u(\mu)\|^2_{L^2(\mu)}:=\sum_{i,j}\int_{\RR^n}\|\partial_{z}\partial_{\mu}u_{ij}(\mu)(z)\|^2\mu(dz)$. For convenience, we write $u\in C^{1,1}(\mathscr{P}_2, \RR^n)$, if the $\RR^n$-valued map $\mu\mapsto u(\mu)$ is differentiable at any $\mu\in\mathscr{P}_2$, and $\partial_{\mu}u(\mu)(z):\RR^n\rightarrow \RR^n$ is differentiable at any $z\in\RR^n$.

\vspace{0.1cm}
For a vector-valued or matrix-valued function $F(t,x,y)$ defined on $[0,\infty)\times\RR^n\times\RR^m$. For any $u, v\in \{t,x,y\}$, we use $\partial_u F$ to denote the first order partial derivative of $F$ $w.r.t.$ component $u$ and $\partial^2_{u v} F$ to denote its second order partial derivatives of $F$ $w.r.t.$ components $u$ and $v$. For convenience, we say an $\RR^n$-valued $F$ belongs to $C^{1,2,2}([0,\infty)\times\RR^n\times\RR^m,\RR^n)$, if $\partial_t F(t,x,y)$, $\partial^2_{xx} F(t,x,y)$ and $\partial^2_{yy} F(t,x,y)$ exist for any $(t,x,y)\in[0,\infty)\times\RR^n\times\RR^m$.

\vspace{0.3cm}
We suppose that for any $T>0$, there exist constants $C_T, \beta\in (0,\infty)$ and $\gamma_1,\gamma_2\in (0,1]$ such that the following conditions hold for all $t, t_1, t_2\in [0, T], x,x_1,x_2\in\RR^n, \mu,\mu_1,\mu_2\in \mathscr{P}_2, y,y_1,y_2\in\RR^m$.

\smallskip
\noindent
\begin{conditionA}\label{A1} (Conditions on $b$, $\sigma$, $f$ and $g$ )
\begin{eqnarray}
&&|b(t_1, x_1, \mu_1, y_1)-b(t_2, x_2, \mu_2, y_2)|+\|\sigma(t_1, x_1,\mu_1)-\sigma(t_2, x_2,\mu_2)\|\nonumber\\
\leq \!\!\!\!\!\!\!\!&&C_T\left[|t_1-t_2|+|x_1-x_2|+|y_1-y_2|+\mathbb{W}_2(\mu_1, \mu_2)\right];\label{A11}
\end{eqnarray}
\begin{eqnarray}
&&|f(t_1, x_1,\mu_1, y_1)-f(t_2, x_2, \mu_2,y_2)|+\|g(t_1, x_1, \mu_1,y_1)-g(t_2, x_2, \mu_2, y_2)\|\nonumber\\
\leq \!\!\!\!\!\!&&C_T\left[|t_1-t_2|+|x_1-x_2|+|y_1-y_2|+\mathbb{W}_2(\mu_1, \mu_2)\right];\label{A21}
\end{eqnarray}
and
\begin{eqnarray}
2\langle f(t,x,\mu, y_1)-f(t,x, \mu,y_2), y_1-y_2\rangle\!+\!3\|g(t, x, \mu,y_1)-g(t,x,\mu, y_2)\|^2\!\leq -\beta|y_1-y_2|^2.\label{sm}
\end{eqnarray}
\end{conditionA}

\smallskip
\noindent
\begin{conditionA}\label{A2}(Conditions on first-order partial derivatives)
The first-order partial derivatives $\partial_t b(t,x,\mu,y)$, $\partial_x b(t,x,\mu,y)$, $\partial_{\mu} b(t,x,\mu,y)$ and $\partial_y b(t,x,\mu,y)$ exist for any $(t,x,y,\mu)\in[0,\infty)\times\RR^n\times\RR^m\times\mathscr{P}_2$. Moreover,
\begin{eqnarray}
\sup_{t\in[0,T],x\in\RR^n,\mu\in\mathscr{P}_2}|\partial_t b(t, x, \mu, y_1)-\partial_t b(t, x, \mu, y_2)|\leq C_T|y_1-y_2|^{\gamma_1};\label{A31}
\end{eqnarray}
\begin{eqnarray}
\sup_{t\in[0,T],x\in\RR^n,\mu\in\mathscr{P}_2}\|\partial_x b(t, x, \mu, y_1)-\partial_x b(t, x, \mu, y_2)\|\leq C_T|y_1-y_2|^{\gamma_1};\label{A32}
\end{eqnarray}
\begin{eqnarray}
\sup_{t\in[0,T],x\in\RR^n,\mu\in\mathscr{P}_2}\|\partial_{\mu} b(t, x, \mu, y_1)-\partial_{\mu} b(t, x, \mu, y_2)\|_{L^2(\mu)}\leq C_T|y_1-y_2|^{\gamma_1};\label{A33}
\end{eqnarray}
\begin{eqnarray}
\sup_{t\in[0,T],x\in\RR^n,\mu\in\mathscr{P}_2}\|\partial_{y} b(t, x, \mu, y_1)-\partial_{y} b(t, x, \mu, y_2)\|\leq C_T|y_1-y_2|^{\gamma_1}.\label{A34}
\end{eqnarray}
Furthermore, if $b$ is replaced by $f$ and $g$, the properties \eref{A31}-\eref{A34} also hold.
\end{conditionA}

\smallskip
\noindent
\begin{conditionA}\label{A3} (Conditions on second-order partial derivatives)

The second-order partial derivatives $\partial^2_{xx}b(t,x,\mu,y)$, $\partial^2_{xy}b(t,x,\mu,y)$ and $\partial^2_{yy}b(t,x,\mu,y)$ exist $(t,x,y,\mu)\in[0,\infty)\times\RR^n\times\RR^m\times\mathscr{P}_2$, and $b(t,x,\cdot,y)\in C^{1,1}(\mathscr{P}_2,\RR^n)$. Moreover, $\partial^2_{xy}b(t,x,\mu,y)$, $\partial^2_{yy}b(t,x,\mu,y)$ are uniformly bounded and
\begin{eqnarray}
\sup_{t\in[0,T],x\in\RR^n,\mu\in\mathscr{P}_2}\|\partial^2_{xx} b(t, x, \mu, y_1)-\partial^2_{xx} b(t, x, \mu, y_2)\|\leq C_T|y_1-y_2|^{\gamma_2};\label{A41}
\end{eqnarray}
\begin{eqnarray}
\sup_{t\in[0,T],x\in\RR^n,\mu\in\mathscr{P}_2}\|\partial^2_{xy} b(t, x, \mu, y_1)-\partial^2_{xy} b(t, x, \mu, y_2)\|\leq C_T|y_1-y_2|^{\gamma_2};\label{A42}
\end{eqnarray}
\begin{eqnarray}
\sup_{t\in[0,T],x\in\RR^n,\mu\in\mathscr{P}_2}\|\partial^2_{yy} b(t, x, \mu, y_1)-\partial^2_{yy} b(t, x, \mu, y_2)\|\leq C_T|y_1-y_2|^{\gamma_2};\label{A431}
\end{eqnarray}
\begin{eqnarray}
\sup_{t\in[0,T],x\in\RR^n,\mu\in\mathscr{P}_2}\|\partial_{z}\partial_{\mu} b(t, x, \mu, y_1)-\partial_{z}\partial_{\mu} b(t, x, \mu, y_2)\|_{L^2(\mu)}\leq C_T|y_1-y_2|^{\gamma_2}.\label{A44}
\end{eqnarray}
Furthermore, if $b$ is replaced by $f$ and $g$, the properties \eref{A41}-\eref{A44} also hold, and
\begin{eqnarray*}
\!\!\!\!\!\!\!\!\sup_{t\in[0,T],x\in\RR^n, \mu\in\mathscr{P}_2,y\in\RR^m}\!\!\!\!\!\!\!\!\!\!\max&&\!\!\!\!\!\!\!\!\Big\{\|\partial^2_{xx}f(t,x,\mu,y)\|, \|\partial^2_{xy}f(t,x,\mu,y)\|,\|\partial^2_{yy}f(t,x,\mu,y)\|,\\
&&\!\!\!\!\|\partial^2_{xx}g(t,x,\mu,y)\|,\|\partial^2_{xy}g(t,x,\mu,y)\|,\|\partial^2_{yy}g(t,x,\mu,y)\|,\\
&&\!\!\!\!\|\partial_z\partial_{\mu}f(t,x,\mu,y)\|_{L^2(\mu)},\|\partial_z\partial_{\mu}g(t,x,\mu,y)\|_{L^2(\mu)}\Big\}\leq C_T.
\end{eqnarray*}
\end{conditionA}

\begin{remark}\label{R2.1} We here give some comments on the conditions above.
 \begin{itemize}
\item{ Conditions \eref{A11} and \eref{A21} imply that for any $T>0$, there exists $C_T>0$ such that for any $x\in\RR^n, y\in\RR^m$, $\mu\in\mathscr{P}_2$, $t\in[0,T]$,
\begin{eqnarray}
|b(t, x, \mu, y)|+\|\sigma(t, x, \mu)\|\leq C_T\left\{1+|x|+|y|+[\mu(|\cdot|^2)]^{1/2}\right\}\label{RE1}
\end{eqnarray}
and
\begin{eqnarray}
 |f(t, x,\mu,y)|+\|g(t,x,\mu, y)\|\leq C_T\left\{1+|x|+|y|+[\mu(|\cdot|^2)]^{1/2}\right\}.\label{RE2}
\end{eqnarray}}

\item{ Conditions \eref{A21} and \eref{sm} imply that for any $T>0$, there exists $C_T>0$ such that for any $x\in\RR^n, y\in\RR^m$, $\mu\in\mathscr{P}_2$, $t\in[0,T]$,
\begin{eqnarray}
 2\langle f(t, x,\mu,y), y\rangle+3\|g(t,x,\mu, y)\|^2\leq \frac{-\beta}{2}|y|^2+C_T\left\{1+|x|+[\mu(|\cdot|^2)]^{1/2}\right\}.\label{RE3}
\end{eqnarray}

}

\item{ Condition \eref{sm} is used to guarantee the existence and uniqueness of an invariant measure for the frozen equation (see Eq. (\ref{FEQ1}) below) and the solution of system \eref{Equation} has finite fourth moment.
}

\item{Using the time discretization approach, to prove the strong convergence order we need assumptions \ref{A1} and \ref{A2}. However, if using the technique of Poisson equation to prove the strong convergence order,  we needs the assumption \ref{A3} additionally. }
\end{itemize}
\end{remark}

The following theorem is the existence and uniqueness of strong solutions for system \eref{Equation}, which can be obtained by using the result due to Wang  in \cite{WFY} and whose detailed proof will be presented in the Appendix.
\begin{theorem}\label{main}
Suppose that conditions \eref{A11} and \eref{A21}  hold. For any $\ep>0$, any given initial value $x\in\RR^n, y\in \RR^m$, there exists a unique solution $\{(X^{\ep}_t,Y^{\ep}_t), t\geq 0\}$ to system \eref{Equation} and for all $T>0$,
$(X^{\ep},Y^{\ep})\in C([0,T]; \RR^n)\times C([0,T]; \RR^m), \PP-a.s.$ and
\begin{equation}\left\{\begin{array}{l}\label{mild solution}
\displaystyle
X^{\ep}_t=x+\int^t_0b(s, X^{\ep}_s, \mathscr{L}_{X^{\ep}_s}, Y^{\ep}_s)ds+\int^t_0\sigma(s, X^{\ep}_s,\mathscr{L}_{X^{\ep}_s})d W^{1}_s,\\
\displaystyle
Y^{\ep}_t=y+\frac{1}{\ep} \int^t_0f(s, X^{\ep}_s,\mathscr{L}_{X^{\ep}_s},Y^{\ep}_s)ds
+\frac{1}{\sqrt{\ep}} \int^t_0 g(s, X^{\ep}_s,\mathscr{L}_{X^{\ep}_s}, Y^{\ep}_s)dW^{2}_s.
\end{array}\right.
\end{equation}
\end{theorem}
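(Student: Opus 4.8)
The plan is to reduce the existence and uniqueness for the coupled slow-fast system \eref{Equation} to the general well-posedness theory for McKean-Vlasov SDEs with locally Lipschitz coefficients, as developed by Wang in \cite{WFY}. The first step is to view the pair $Z^{\ep}_t := (X^{\ep}_t, Y^{\ep}_t) \in \RR^{n+m}$ as a single distribution-dependent diffusion. Writing $\mathscr{L}_{Z^{\ep}_t}$ for its joint law and noting that the law $\mathscr{L}_{X^{\ep}_t}$ appearing in the coefficients is simply the first marginal (a Lipschitz function of $\mathscr{L}_{Z^{\ep}_t}$ in $\mathbb{W}_2$), the system \eref{Equation} takes the form $dZ^{\ep}_t = B^{\ep}(t, Z^{\ep}_t, \mathscr{L}_{Z^{\ep}_t})\,dt + \Sigma^{\ep}(t, Z^{\ep}_t, \mathscr{L}_{Z^{\ep}_t})\,dW_t$ with $W = (W^1, W^2)$, and with the drift and diffusion built from $b, \sigma, f/\ep, g/\sqrt{\ep}$ in block form. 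For fixed $\ep > 0$ the factors $1/\ep$ and $1/\sqrt{\ep}$ are harmless constants, so the regularity of $B^{\ep}, \Sigma^{\ep}$ is inherited directly from \ref{A1}.

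Second, I would check that the hypotheses of the well-posedness result in \cite{WFY} are satisfied. From \eref{A11} and \eref{A21}, $B^{\ep}$ and $\Sigma^{\ep}$ are globally Lipschitz in $(x,y)$ and in the measure variable jointly, uniformly in $t$ on compacts, and by the linear growth consequences \eref{RE1}--\eref{RE2} in Remark \ref{R2.1} they have at most linear growth. These are exactly the standard conditions guaranteeing existence and uniqueness of a strong solution with paths in $C([0,T]; \RR^{n+m})$ together with finiteness of the relevant moments; one applies Wang's theorem on each interval $[0,T]$ and then patches, using that the solution does not explode because of linear growth. Continuity of the sample paths, and hence the integral (mild) formulation \eref{mild solution}, is then immediate from the continuity of the coefficients and the standard properties of It\^o integrals.

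A point worth a little care is that Wang's framework is typically phrased for SDEs depending on the law of the \emph{full} solution process, whereas here the coefficients depend only on the law of the slow component $X^{\ep}_t$. This is not an obstacle: since $\mu \mapsto \mu \circ \pi_1^{-1}$ (projection onto the first $n$ coordinates) is a $1$-Lipschitz map from $\mathscr{P}_2(\RR^{n+m})$ to $\mathscr{P}_2(\RR^n)$ for $\mathbb{W}_2$, composing it with $b, \sigma, f, g$ preserves the Lipschitz and growth bounds. One should also note that \eref{A11}--\eref{A21} do not by themselves require \eref{sm}; indeed Theorem \ref{main} is stated under \eref{A11} and \eref{A21} alone, and \eref{sm} (which is needed elsewhere for the invariant measure of the frozen equation and for the fourth-moment bound) plays no role here.

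The main obstacle, if any, is bookkeeping rather than substance: one must verify that the block coefficients $B^{\ep}, \Sigma^{\ep}$ genuinely fall within the class treated in \cite{WFY}, tracking the $\ep$-dependence of the constants (which is irrelevant for this existence/uniqueness statement but must be noted so it is not mistakenly used later as $\ep$-uniform), and confirming that the moment estimates needed to close the fixed-point argument in $\mathscr{P}_2$ hold for the augmented system. Once that translation is in place, the conclusion follows directly, and the detailed write-up — which the authors defer to the Appendix — is a routine adaptation.
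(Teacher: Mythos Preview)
Your proposal is correct and matches the paper's own proof essentially line for line: the Appendix rewrites the system for $Z^{\ep}_t=(X^{\ep}_t,Y^{\ep}_t)$ with block drift and diffusion depending on the marginal $\mu$ of $\tilde\mu\in\mathscr{P}_2(\RR^{n+m})$, verifies the Lipschitz and linear growth bounds from \eref{A11}--\eref{A21} (with $\ep$-dependent constants), and invokes \cite[Theorem~4.1]{WFY}. Your additional remarks that the marginal map is $1$-Lipschitz for $\mathbb{W}_2$ and that \eref{sm} is not used here are accurate and make the argument slightly more explicit than the paper's version.
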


\vskip 0.3cm

Now we formulate our first main result.
\begin{theorem}\label{main result 1}
Suppose that assumptions \ref{A1} and \ref{A2} hold. Then for any $x\in\RR^n, y\in\RR^m$ and $T>0$, we have
\begin{eqnarray}
\sup_{t\in[0,T]}\mathbb{E}|X_{t}^{\ep}-\bar{X}_{t}|^2\leq C\ep^{2/3}, \label{R1}
\end{eqnarray}
where $C$ is a constant depending on $T, |x|, |y|$. Furthermore, if there is no noise in the slow equation (i.e., $\sigma\equiv 0$), we have
\begin{eqnarray}
\sup_{t\in[0,T]}\mathbb{E}|X_{t}^{\ep}-\bar{X}_{t}|^{2}\leq C\ep. \label{R2}
\end{eqnarray}
Here $\bar{X}$ is the solution of the following averaged equation,
\begin{equation}\left\{\begin{array}{l}
\displaystyle d\bar{X}_{t}=\bar{b}(t,\bar{X}_t,\mathscr{L}_{\bar{X}_t})dt+\sigma(t,\bar{X}_t,\mathscr{L}_{\bar{X}_t})d W^{1}_t,\\
\bar{X}_{0}=x,\end{array}\right. \label{1.3}
\end{equation}
where $\bar{b}(t, x,\mu)=\int_{\RR^m}b(t, x,\mu, y)\nu^{t, x, \mu}(dy)$ and $\nu^{t, x, \mu}$ denotes the unique invariant measure for the transition semigroup of the following frozen equation:
\begin{equation}\left\{\begin{array}{l}\label{FEQ1}
\displaystyle
dY_{s}=f(t, x, \mu, Y_{s})ds+g(t, x, \mu, Y_{s})d\tilde {W}_{s}^{2},\\
Y_{0}=y,\\
\end{array}\right.
\end{equation}
where $\{\tilde{W}_{s}^{2}\}_{s\geq 0}$ is a $d_2$-dimensional Brownian motion on another complete probability space $(\tilde{\Omega}, \tilde{\mathscr{F}}, \tilde{\mathbb{P}})$.
\end{theorem}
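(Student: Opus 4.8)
The plan is to follow the classical Khasminskii time-discretization scheme, adapted to the McKean-Vlasov setting. First I would establish a priori moment bounds: using \eref{RE1}--\eref{RE3} and Gronwall's inequality, show that $\sup_{\ep}\sup_{t\in[0,T]}\EE(|X^\ep_t|^4+|Y^\ep_t|^4)\leq C_T(1+|x|^4+|y|^4)$, and that the slow component is H\"older-$1/2$ in time in the sense $\EE|X^\ep_t-X^\ep_s|^2\leq C_T|t-s|$; also the averaged equation \eref{1.3} must be shown to be well-posed, which requires checking that $\bar b(t,x,\mu)$ inherits the Lipschitz property in $(x,\mu)$ from \ref{A1} together with regularity of the invariant measure $\nu^{t,x,\mu}$ in its parameters (this uses the strong dissipativity \eref{sm}, which gives exponential ergodicity of the frozen semigroup with a rate uniform in $(t,x,\mu)$).

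Next I would introduce the auxiliary process: partition $[0,T]$ into intervals of length $\Delta=\Delta(\ep)$ (to be optimized, with $\ep\ll\Delta\ll 1$), and define $(\hat X^\ep,\hat Y^\ep)$ on each interval $[t_k,t_{k+1})$ by freezing the slow variable and its law at the left endpoint in the coefficients of the fast equation, i.e. solving $d\hat Y^\ep_t=\frac1\ep f(t_k,\hat X^\ep_{t_k},\mathscr L_{X^\ep_{t_k}},\hat Y^\ep_t)dt+\frac1{\sqrt\ep}g(t_k,\hat X^\ep_{t_k},\mathscr L_{X^\ep_{t_k}},\hat Y^\ep_t)dW^2_t$ and feeding $\hat Y^\ep$ into the slow drift. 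One then controls $\sup_t\EE|X^\ep_t-\hat X^\ep_t|^2$ and $\sup_t\EE|Y^\ep_t-\hat Y^\ep_t|^2$ by a Gronwall argument; the fast-variable difference is $O(\Delta^2/\ep + \Delta + \mathbb W_2^2(\text{laws}))$-type and closes because the $\mathscr L_{X^\ep_t}$-dependence is Lipschitz and the law differences are themselves controlled by the slow moment estimates. The key fluctuation estimate is then the bound on
\begin{eqnarray*}
\EE\Big|\int_0^t\big[b(s,\hat X^\ep_s,\mathscr L_{X^\ep_s},\hat Y^\ep_s)-\bar b(s,\hat X^\ep_s,\mathscr L_{X^\ep_s})\big]ds\Big|^2,
\end{eqnarray*}
which one splits into a sum over the blocks; on each block one uses the exponential ergodicity of the frozen equation together with a comparison of $\hat Y^\ep$ on $[t_k,t_{k+1}]$ with the stationary frozen dynamics, rescaling time by $\ep$, to get a contribution of order $\ep/\Delta$ per block plus a $\Delta$-discretization error, hence $O(\ep/\Delta)+O(\Delta)$ in total. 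Balancing $\Delta^2/\ep$, $\Delta$ and $\ep/\Delta$ forces $\Delta\sim\ep^{2/3}$ and yields the rate $\ep^{2/3}$ in \eref{R1}; when $\sigma\equiv0$ the slow process is Lipschitz rather than merely H\"older-$1/2$ in time, so the discretization error improves from $\Delta$ to $\Delta^2$, the balance becomes $\Delta\sim\ep^{1/2}$, and one gets $\ep$ as in \eref{R2}.

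Finally I would assemble the pieces: writing $X^\ep_t-\bar X_t$ via the integral equations, inserting $\pm\hat X^\ep$, $\pm b(s,\hat X^\ep_s,\mathscr L_{X^\ep_s},\hat Y^\ep_s)$ and $\pm\bar b(s,\hat X^\ep_s,\mathscr L_{X^\ep_s})$, and using the Lipschitz property of $\bar b$ in $(x,\mu)$, the Burkholder--Davis--Gundy inequality for the $\sigma$-term, the auxiliary-process estimates, and the fluctuation bound, one arrives at $\EE\sup_{s\leq t}|X^\ep_s-\bar X_s|^2\leq C\ep^{2/3}+C\int_0^t\EE\sup_{r\leq s}|X^\ep_r-\bar X_r|^2ds$, and Gronwall closes it (note the $\mathscr L_{X^\ep_s}$ vs $\mathscr L_{\bar X_s}$ discrepancy is harmless since $\mathbb W_2^2(\mathscr L_{X^\ep_s},\mathscr L_{\bar X_s})\leq\EE|X^\ep_s-\bar X_s|^2$ and is absorbed into the Gronwall term). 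The main obstacle I anticipate is the block-wise fluctuation estimate in the presence of the distribution dependence: one must ensure that the ergodic averaging on each short block is genuinely against $\nu^{t_k,\hat X^\ep_{t_k},\mathscr L_{X^\ep_{t_k}}}$ and that the error in replacing the true law $\mathscr L_{X^\ep_s}$ by the frozen law $\mathscr L_{X^\ep_{t_k}}$ inside the coefficients does not destroy the exponential decay; this requires the uniform-in-parameters spectral gap coming from \eref{sm} and careful bookkeeping of the $\mathbb W_2$-Lipschitz constants, but no genuinely new idea beyond the distribution-independent case.
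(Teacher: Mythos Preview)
Your overall strategy---Khasminskii time discretization, auxiliary process, block-wise fluctuation estimate, Gronwall closure---is exactly the one the paper uses. But the fluctuation estimate is where the real work is, and your account of it is too optimistic and internally inconsistent.

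First, your error bookkeeping does not produce $\ep^{2/3}$. With your stated terms $\Delta^2/\ep$, $\Delta$, $\ep/\Delta$, at $\Delta=\ep^{2/3}$ one gets $\ep/\Delta=\ep^{1/3}$, which is \emph{worse} than $\ep^{2/3}$; there is no choice of $\Delta$ that makes all three terms $O(\ep^{2/3})$. (Also, the fast-variable difference is actually $O(\Delta)$, not $O(\Delta^2/\ep+\Delta)$: the dissipativity gives a factor $e^{-\beta(t-s)/\ep}$ whose integral is $O(\ep)$ and cancels the $1/\ep$.) The paper's genuine balance is between $\delta$ (discretization) and $\ep/\delta^{1/2}$ (fluctuation), and $\delta=\ep^{2/3}$ makes both equal to $\ep^{2/3}$.

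Second, and more substantively, you have not addressed the cross terms. When you square $\sum_k\int_{k\delta}^{(k+1)\delta}[b-\bar b]\,ds$, the diagonal terms give $O(\ep+\ep^2/\delta)$ by straightforward ergodicity, but the off-diagonal terms $\sum_{i<j}\EE\langle I_i,I_j\rangle$ do \emph{not} vanish: the blocks are coupled through the initial conditions $\hat Y^\ep_{k\delta}$ and the slow variables $X^\ep_{k\delta}$. The paper handles these by conditioning on $\mathscr F_{(i+1)\delta}$, writing a telescope that slides the frozen parameters from $(j\delta,X^\ep_{j\delta},\mathscr L_{X^\ep_{j\delta}})$ back to $((i+1)\delta,X^\ep_{(i+1)\delta},\mathscr L_{X^\ep_{(i+1)\delta}})$, and bounding each step using exponential-in-$s$ decay of $\partial_t\tilde b$, $\partial_x\tilde b$, $\partial_\mu\tilde b$ where $\tilde b(t,x,\mu,y,s)=\tilde\EE b(t,x,\mu,Y^{t,x,\mu,y}_s)-\bar b(t,x,\mu)$. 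It is precisely for these derivative estimates that assumption~\ref{A2} is needed (the H\"older-in-$y$ regularity of $\partial_t b,\partial_x b,\partial_\mu b,\partial_y b$ and likewise for $f,g$); your proposal invokes only \ref{A1} and never uses \ref{A2}. This telescope argument is what produces the $\ep/\delta^{1/2}$ term, which is the actual bottleneck.

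In short: the architecture is right, but the heart of the proof---controlling the block cross-correlations via the regularity of $\tilde b$ in its parameters, which is where \ref{A2} enters and where the $\ep/\delta^{1/2}$ rate (hence $\delta=\ep^{2/3}$) comes from---is missing.
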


\begin{remark}
The estimates \eref{R1} and \eref{R2} imply that the slow component $X^{\ep}_t$ strongly converges to the solution $\bar{X}_t$ of the corresponding averaged equation with convergence order $\ep^{1/3}$ and $\ep^{1/2}$ respectively. Usually, the convergence order $\ep^{1/2}$ should be optimal. Hence, under more regularity conditions on the coefficients, we will use the technique of Poisson equation to obtain the optimal convergence order in the general case (i.e., $\sigma\neq 0$), which is stated in the following theorem.
\end{remark}

\begin{theorem}\label{main result 2}
Suppose that assumptions  \ref{A1}- \ref{A3} hold. Then for any $x\in\RR^n, y\in\RR^m$ and $T>0$, we have
\begin{eqnarray}
\sup_{t\in[0,T]}\mathbb{E}|X_{t}^{\ep}-\bar{X}_{t}|^2\leq C\ep, \label{R3}
\end{eqnarray}
where $C$ is a constant depending on $T, |x|, |y|$, and $\bar{X}$ is the solution of the corresponding averaged equation \eref{1.3}.
\end{theorem}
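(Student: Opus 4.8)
The plan is to prove Theorem \ref{main result 2} via the Poisson equation technique, following the scheme of \cite{PV1,PV2,RSX} but carefully tracking the dependence on the measure argument $\mu$. The first step is to set up the auxiliary Poisson equation: for the frozen generator $L_2(t,x,\mu)$ associated with Eq. \eref{FEQ1} acting on functions of $y$, one considers
\begin{equation*}
L_2(t,x,\mu)\Phi(t,x,\mu,y) = -\big[b(t,x,\mu,y)-\bar b(t,x,\mu)\big],
\end{equation*}
where $t,x,\mu$ enter only as parameters. By \eref{sm}, the frozen semigroup is exponentially ergodic with a unique invariant measure $\nu^{t,x,\mu}$, and since $b(t,x,\mu,\cdot)-\bar b(t,x,\mu)$ is centered w.r.t.\ $\nu^{t,x,\mu}$ and of linear growth, $\Phi$ is well defined by $\Phi = \int_0^\infty \big(P_s^{t,x,\mu}[b-\bar b]\big)\,ds$. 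The key preliminary lemma — which I expect to occupy the bulk of the work — is to show that under assumptions \ref{A1}--\ref{A3}, $\Phi$ and its derivatives $\partial_x\Phi$, $\partial_\mu\Phi$, $\partial_y\Phi$, $\partial^2_{xx}\Phi$, $\partial^2_{xy}\Phi$, $\partial^2_{yy}\Phi$, $\partial_z\partial_\mu\Phi$, $\partial_t\Phi$ exist, are continuous, and satisfy polynomial growth bounds in $|x|$, $|y|$ and $[\mu(|\cdot|^2)]^{1/2}$. Differentiating the defining integral and using the exponential decay of the frozen semigroup together with its differentiability in the parameters (obtained by differentiating the frozen SDE \eref{FEQ1} in $x$ and, in the Lions sense, in $\mu$), one gets these bounds; the smoothing/ergodicity in $y$ controls the $y$-derivatives.

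Second, with $\Phi$ in hand, I would apply It\^o's formula to $\Phi(t,X^\ep_t,\mathscr{L}_{X^\ep_t},Y^\ep_t)$. This is the step where the McKean--Vlasov structure forces new terms compared with \cite{RSX}: besides the usual drift, diffusion, and the $\frac1\ep L_2\Phi$ term (which by construction equals $-\frac1\ep[b-\bar b]$, yielding precisely the fluctuation $b(s,X^\ep_s,\mathscr{L}_{X^\ep_s},Y^\ep_s)-\bar b(s,X^\ep_s,\mathscr{L}_{X^\ep_s})$ after multiplying through by $\ep$), one must also include the terms coming from differentiating $\Phi$ in its measure slot along the flow $\mathscr{L}_{X^\ep_t}$, i.e.\ $\widetilde{\EE}\big[\langle \partial_\mu\Phi(\cdots)(\widetilde X^\ep_s), b(\cdots)\rangle + \tfrac12\mathrm{tr}(\partial_z\partial_\mu\Phi(\cdots)(\widetilde X^\ep_s)\sigma\sigma^*(\cdots))\big]$ with $(\widetilde X^\ep,\widetilde Y^\ep)$ an independent copy. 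These extra terms are $O(1)$ in $\ep$ and hence, after the factor $\ep$, contribute $O(\ep)$; the bounds from the preliminary lemma plus the finite fourth-moment estimate (guaranteed by \eref{sm}, cf.\ Remark \ref{R2.1}) make them integrable and uniformly controlled.

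Third, I would rearrange the It\^o expansion to write
\begin{equation*}
\int_0^t \big[b(s,X^\ep_s,\mathscr{L}_{X^\ep_s},Y^\ep_s)-\bar b(s,X^\ep_s,\mathscr{L}_{X^\ep_s})\big]\,ds = \ep\big[\Phi(0,\cdots)-\Phi(t,\cdots)\big] + \ep\,(\text{bounded variation terms}) + \sqrt\ep\,(\text{martingale}),
\end{equation*}
so that each piece on the right is, in $L^2$, of size $O(\ep)$ or $O(\sqrt\ep)$; squaring and taking expectations gives $\EE\big|\int_0^t [b-\bar b]\,ds\big|^2 \leq C\ep$. Then, writing the difference $X^\ep_t-\bar X_t$ through its integral equation and splitting $b(s,X^\ep_s,\mathscr{L}_{X^\ep_s},Y^\ep_s)-\bar b(s,\bar X_s,\mathscr{L}_{\bar X_s})$ into the fluctuation term just controlled plus Lipschitz differences $\bar b(s,X^\ep_s,\mathscr{L}_{X^\ep_s})-\bar b(s,\bar X_s,\mathscr{L}_{\bar X_s})$ (using the Lipschitz property of $\bar b$, which follows from \ref{A1} and the uniform ergodicity) and the $\sigma$-difference handled by It\^o isometry, a Gronwall argument in $u_\ep(t):=\sup_{s\le t}\EE|X^\ep_s-\bar X_s|^2$ closes the estimate to yield \eref{R3}.

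The main obstacle is the regularity analysis of $\Phi$ in the measure variable $\mu$ — establishing existence, continuity and polynomial growth of $\partial_\mu\Phi$ and $\partial_z\partial_\mu\Phi$ uniformly in the parameters. This requires differentiating the frozen semigroup $P_s^{t,x,\mu}$ in the Lions sense, which in turn means controlling the Lions derivative of $\nu^{t,x,\mu}$ and of the frozen flow; the exponential decay in $s$ must survive one and two derivatives so that the $ds$-integral defining $\partial_\mu\Phi$ and $\partial_z\partial_\mu\Phi$ still converges. Assumption \ref{A3} (in particular the H\"older continuity \eref{A44} of $\partial_z\partial_\mu b$ in $y$ and the uniform bounds on the second-order and mixed derivatives of $f,g$) is exactly what is needed to push this through, but verifying it rigorously — including that the various derivatives of $\Phi$ solve the corresponding differentiated Poisson equations — is the technically heaviest part and is where I would spend most of the effort, likely deferring the details to a separate section or the Appendix.
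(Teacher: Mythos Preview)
Your proposal is correct and follows essentially the same route as the paper: the paper also defines $\Phi=\int_0^\infty(P^{t,x,\mu}_s[b-\bar b])\,ds$, establishes the regularity of $\Phi$ in $(t,x,\mu,y)$ (including $\partial_\mu\Phi$ and $\partial_z\partial_\mu\Phi$) as a separate proposition, applies the measure-dependent It\^o formula to $\Phi(t,X^\ep_t,\mathscr{L}_{X^\ep_t},Y^\ep_t)$, and closes with Gronwall. Two minor remarks: since $W^1$ and $W^2$ are independent the cross-variation term involving $\partial^2_{xy}\Phi$ does not appear, and $\partial^2_{yy}\Phi$ is absorbed into $L_2\Phi=-(b-\bar b)$, so you only actually need the bounds on $\Phi,\partial_t\Phi,\partial_x\Phi,\partial_y\Phi,\partial_\mu\Phi,\partial^2_{xx}\Phi,\partial_z\partial_\mu\Phi$ --- exactly what the paper proves.
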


\section{Proof of Theorem \ref{main result 1}}

In this section, we intend to use the approach of time discretization to get the strong convergence order. The proof consists of four parts, each of which is presented in the respective subsection below. In the Subsection 3.1, we give some a-priori estimates of the solution $(X^{\ep}_t, Y^{\ep}_t)$.
In the Subsection 3.2, we introduce an auxiliary process $(\hat{X}_{t}^{\ep},\hat{Y}_{t}^{\ep})$, and obtain the convergence rate of the difference process $X^{\ep}_t-\hat{X}_{t}^{\ep}$. We study the frozen equation, and prove the exponential ergodicity of the corresponding semigroup in Subsection 3.3. In the final subsection, we  prove a crucial estimate for $\sup_{t\in[0,T]}\EE|\hat{X}_{t}^{\ep}-\bar{X}_{t}|$ which relies on somewhat delicate arguments. Note that we always assume conditions \ref{A1} and \ref{A2} to hold, and the initial values $x\in\RR^n, y\in \RR^m$ are fixed in this section.

\vskip 0.2cm

 \subsection{Some a-priori estimates for  $(X^{\ep}_t, Y^{\ep}_t)$}

Firstly, we prove some uniform bounds $w.r.t.$ $\ep \in (0,1)$ for the $4$th moment of the solution $(X_{t}^{\ep}, Y_{t}^{\ep})$
to system (\ref{Equation}).
\begin{lemma} \label{PMY}
For any $T>0$, there exists  a constant $ C_T>0$ such that
\begin{eqnarray*}
\sup_{\ep\in(0,1)}\sup_{t\in [0, T]}\mathbb{E}|X_{t}^{\ep}|^{4}\leq C_{T}(1+|x|^{4}+|y|^4)\label{X}
\end{eqnarray*}
and
\begin{eqnarray*}
\sup_{\ep\in(0,1)}\sup_{t\in [0, T]}\mathbb{E}|Y_{t}^{\ep}|^{4}\leq C_{T}(1+|x|^{4}+|y|^{4}).\label{Y}
\end{eqnarray*}
\end{lemma}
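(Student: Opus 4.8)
The plan is to apply It\^o's formula to $|Y^{\ep}_t|^4$ and to $|X^{\ep}_t|^4$ separately, use the strong dissipativity \eref{RE3} to beat the singular $1/\ep$ prefactor in the fast equation, and then close a Gronwall argument for the pair $\big(\sup_{s\le t}\EE|X^{\ep}_s|^4,\ \EE|Y^{\ep}_t|^4\big)$ with constants independent of $\ep\in(0,1)$. To make the computations rigorous one first localizes: with $\tau_N:=\inf\{t\ge 0:|X^{\ep}_t|\vee|Y^{\ep}_t|\ge N\}$ one runs all the estimates below with $t$ replaced by $t\wedge\tau_N$, obtains bounds independent of $N$, and lets $N\to\infty$ via Fatou's lemma (the solution from Theorem \ref{main} is continuous, so $\tau_N\uparrow\infty$ a.s.); below I suppress the localization.

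\emph{Step 1 (the slow component).} Starting from the integral equation \eref{mild solution}, applying Jensen's inequality to the drift term, the Burkholder--Davis--Gundy inequality to the stochastic integral, and the linear growth bound \eref{RE1} (note that $\sigma$ does not depend on $y$, so the martingale part only feeds back $\EE|X^{\ep}_s|^4$), one obtains
$$
\EE|X^{\ep}_t|^4\le C|x|^4+C_T\int_0^t\Big(1+\EE|X^{\ep}_s|^4+\EE|Y^{\ep}_s|^4\Big)\,ds ,
$$
where the distribution-dependent term is controlled via $\big[\mathscr{L}_{X^{\ep}_s}(|\cdot|^2)\big]^2=\big(\EE|X^{\ep}_s|^2\big)^2\le\EE|X^{\ep}_s|^4$. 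Since neither $b$ nor $\sigma$ carries a factor $1/\ep$, the constant here is independent of $\ep$.

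\emph{Step 2 (the fast component).} Applying It\^o's formula to $|Y^{\ep}_t|^4$ and using $\nabla^2|y|^4=4|y|^2 I+8yy^{\top}$, the drift is $\tfrac1\ep\big(4|Y^{\ep}_t|^2\langle f,Y^{\ep}_t\rangle+2|Y^{\ep}_t|^2\|g\|^2+4|g^{\top}Y^{\ep}_t|^2\big)$; bounding $|g^{\top}Y^{\ep}_t|^2\le\|g\|^2|Y^{\ep}_t|^2$ it is dominated by $\tfrac{2}{\ep}|Y^{\ep}_t|^2\big(2\langle f,Y^{\ep}_t\rangle+3\|g\|^2\big)$, i.e.\ exactly twice the left-hand side of \eref{sm} multiplied by $|Y^{\ep}_t|^2/\ep$. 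By \eref{RE3} and Young's inequality this is at most $-\tfrac{\beta}{2\ep}|Y^{\ep}_t|^4+\tfrac{C}{\ep}\big(1+|X^{\ep}_t|^2+\mathscr{L}_{X^{\ep}_t}(|\cdot|^2)\big)$, hence with $v(t):=\EE|Y^{\ep}_t|^4$,
$$
v'(t)\le-\frac{\beta}{2\ep}v(t)+\frac{C}{\ep}\big(1+\EE|X^{\ep}_t|^4\big).
$$
Variation of constants together with $\tfrac1\ep\int_0^t e^{-\beta(t-s)/(2\ep)}\,ds\le\tfrac2\beta$ and $e^{-\beta t/(2\ep)}\le 1$ (both uniform in $\ep$) gives
$$
v(t)\le|y|^4+C\Big(1+\sup_{s\in[0,t]}\EE|X^{\ep}_s|^4\Big).
$$

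\emph{Step 3 (closing the loop and the main obstacle).} Inserting this bound for $v$ into Step 1 and writing $u(t):=\sup_{s\in[0,t]}\EE|X^{\ep}_s|^4$, one gets $u(t)\le C(|x|^4+|y|^4)+C_T\int_0^t(1+u(s))\,ds$, so Gronwall's inequality yields $\sup_{t\in[0,T]}\EE|X^{\ep}_t|^4\le C_T(1+|x|^4+|y|^4)$, uniformly in $\ep\in(0,1)$; feeding this back into Step 2 gives the matching bound for $\sup_{t\in[0,T]}\EE|Y^{\ep}_t|^4$. The only genuine point is Step 2: one must extract from the dissipativity condition \eref{sm}/\eref{RE3} enough exponential decay to compensate the singular $1/\ep$ factor while paying a cost only in $\ep$-free, lower-order quantities — and this works precisely because the Hessian of $|y|^4$ produces the combination $4\langle f,y\rangle+6\|g\|^2=2\big(2\langle f,y\rangle+3\|g\|^2\big)$ appearing in \eref{sm}. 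The distribution-dependence is harmless, since it enters only through the deterministic scalar $\mathscr{L}_{X^{\ep}_t}(|\cdot|^2)=\EE|X^{\ep}_t|^2$, which is absorbed by the same Gronwall estimate.
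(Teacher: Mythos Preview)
Your proof is correct and follows essentially the same strategy as the paper: apply It\^o's formula to $|Y^{\ep}_t|^4$, use the dissipativity \eref{RE3} so that the $1/\ep$ prefactor produces an $\ep$-uniform bound $\EE|Y^{\ep}_t|^4\le|y|^4+C_T(1+\sup_{s\le t}\EE|X^{\ep}_s|^4)$, feed this into the slow-component estimate, and close by Gronwall. The only cosmetic differences are that in Step~1 you bound the integral equation via BDG rather than applying It\^o's formula to $|X^{\ep}_t|^4$ as the paper does (either route gives the same inequality), and you make the localization explicit, which the paper suppresses.
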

\begin{proof}
By It\^{o}'s formula and estimate \eref{RE1}, we obtain for any $t\in [0, T]$,
\begin{eqnarray*}
|X_{t}^{\ep}|^{4}=\!\!\!\!\!\!\!\!&&|x|^{4}+4\int_{0} ^{t}|X_{s}^{\ep}|^{2}\langle X_{s}^{\ep}, b(s, X_{s}^{\ep}, \mathscr{L}_{X_{s}^{\ep}}, Y_{s}^{\ep})\rangle ds+4\int_{0} ^{t}|X_{s}^{\ep}|^{2}\langle X_{s}^{\ep}, \sigma(s,X_{s}^{\ep},\mathscr{L}_{X_{s}^{\ep}})dW^{1}_s\rangle\\
&&+4\int_{0} ^{t}|\langle X_{s}^{\ep}, \sigma(s,X_{s}^{\ep}, \mathscr{L}_{X_{s}^{\ep}} )\rangle|^2ds
+2\int_{0} ^{t}|X_{s}^{\ep}|^{2}\|\sigma(s,X_{s}^{\ep}, \mathscr{L}_{X_{s}^{\ep}} )\|^2ds\\
\leq\!\!\!\!\!\!\!\!&&|x|^{4}+C_T\int_{0} ^{t}(1+|X^{\ep}_s|^{4}+|Y^{\ep}_s|^{4}+[\mathscr{L}_{X_{s}^{\ep}}(|\cdot|^2)]^2)ds+4\int_{0} ^{t}|X_{s}^{\ep}|^{2}\langle X_{s}^{\ep}, \sigma(s,X_{s}^{\ep},\mathscr{L}_{X_{s}^{\ep}})dW^{1}_s\rangle.
\end{eqnarray*}
Note that $\mathscr{L}_{X_{s}^{\ep}}(|\cdot|^2)=\EE|X_{s}^{\ep}|^2$. Hence, we have
\begin{eqnarray}
\sup_{t\in[0,T]}\EE|X_{t}^{\ep}|^4
\leq\!\!\!\!\!\!\!\!&&C_{T}(|x|^{4}+1)+C_T\int^T_0 \EE|Y_{t}^{\ep}|^{4} dt+C_T\int^T_0 \EE|X_{t}^{\ep}|^{4}dt.\label{I3.8}
\end{eqnarray}

Using It\^{o} formula again and taking expectation, we get
\begin{eqnarray*}
\mathbb{E}|Y_{t}^{\ep}|^{4}=\!\!\!\!\!\!\!\!&&|y|^{4}+\frac{4}{\ep}\int_{0} ^{t}\mathbb{E}\left[|Y_{s}^{\ep}|^{2}\langle f(s, X_{s}^{\ep},\mathscr{L}_{X_{s}^{\ep}},Y_{s}^{\ep}),Y_{s}^{\ep}\rangle\right] ds\\
&&+\frac{2}{\ep}\int_{0} ^{t}\mathbb{E}\left[|Y_{s}^{\ep}|^{2}\|g(s, X_{s}^{\ep}, \mathscr{L}_{X_{s}^{\ep}}, Y_{s}^{\ep})\|^2\right] ds+\frac{4}{\ep}\int_{0} ^{t}\mathbb{E}|\langle Y_{s}^{\ep}, g(s, X_{s}^{\ep}, \mathscr{L}_{X_{s}^{\ep}}, Y_{s}^{\ep})\rangle|^2 ds.
\end{eqnarray*}
By \eref{RE3}, there exists $\beta>0$ such that for any $t\in [0, T]$,
\begin{eqnarray*}
\frac{d}{dt}\mathbb{E}|Y_{t}^{\ep}|^{4}\leq\!\!\!\!\!\!\!\!&&\frac{1}{\ep}\EE\left[4|Y_{t}^{\ep}|^{2}\langle f(t, X_{t}^{\ep},\mathscr{L}_{X_{t}^{\ep}},Y_{t}^{\ep}),Y_{t}^{\ep}\rangle+6|Y_{t}^{\ep}|^{2}\|g(t, X_{t}^{\ep},\mathscr{L}_{X_{t}^{\ep}},Y_{t}^{\ep})\|^2\right]\\
 \leq\!\!\!\!\!\!\!\!&&-\frac{\beta}{\ep}\mathbb{E}|Y_{t}^{\ep}|^{4}
+\frac{C_T}{\ep}\left(\EE|X_{t}^{\ep}|^{4}+1\right).
\end{eqnarray*}
The comparison theorem implies
\begin{eqnarray}
\mathbb{E}|Y_{t}^{\ep}|^{4}\leq\!\!\!\!\!\!\!\!&&|y|^{4}e^{-\frac{\beta t}{\ep}}+\frac{C_T}{\ep}\int^t_0 e^{-\frac{\beta(t-s)}{\ep}}\left(\EE|X_{s}^{\ep}|^{4}+1\right)ds\nonumber\\
\leq\!\!\!\!\!\!\!\!&&|y|^{4}+C_T\left( \sup_{s\in [0,t]}\EE |X_s^{\ep}|^4+1\right).\label{I3.9}
\end{eqnarray}
This and \eref{I3.8} yield
\begin{eqnarray*}
\sup_{t\in[0,T]}\EE|X_{t}^{\ep}|^{4}\leq C_{T}(|x|^{4}+|y|^{4}+1)+C_{T}\int^T_0 \sup_{s\in[0,t]}\EE|X_{s}^{\ep}|^{4}dt.
\end{eqnarray*}
Then by Grownall's inequality, we finally obtain
\begin{eqnarray*}
\sup_{t\in[0,T]}\EE|X_{t}^{\ep}|^{4}\leq C_{T}(|x|^{4}+|y|^{4}+1),
\end{eqnarray*}
which also gives
\begin{eqnarray*}
\sup_{t\in[0, T]}\mathbb{E}|Y_{t}^{\ep}|^{4}\leq C_{T}(|x|^{4}+|y|^{4}+1).
\end{eqnarray*}
 The proof is complete.
\end{proof}

\begin{lemma} \label{COX} For any $T>0$, $0\leq t\leq t+h\leq T$ and $\ep\in(0,1)$, there exists a constant $C_{T}>0$ such that
\begin{eqnarray*}
\mathbb{E}|X_{t+h}^{\ep}-X_{t}^{\ep}|^{2}\leq C_{T}(1+|x|^2+|y|^2)h.
\end{eqnarray*}
\end{lemma}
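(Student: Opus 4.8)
The plan is to work directly from the integral form of the slow equation in Theorem \ref{main}. Writing
$$X^{\ep}_{t+h}-X^{\ep}_{t}=\int_{t}^{t+h}b(s,X^{\ep}_{s},\mathscr{L}_{X^{\ep}_{s}},Y^{\ep}_{s})\,ds+\int_{t}^{t+h}\sigma(s,X^{\ep}_{s},\mathscr{L}_{X^{\ep}_{s}})\,dW^{1}_{s},$$
I would use $|a+b|^{2}\leq 2|a|^{2}+2|b|^{2}$ and estimate the drift and diffusion contributions separately, the only inputs being the linear growth bounds \eref{RE1} and the uniform moment bounds of Lemma \ref{PMY}.

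For the drift part, Cauchy--Schwarz (Hölder) in the time variable gives
$$\mathbb{E}\Big|\int_{t}^{t+h}b(s,X^{\ep}_{s},\mathscr{L}_{X^{\ep}_{s}},Y^{\ep}_{s})\,ds\Big|^{2}\leq h\int_{t}^{t+h}\mathbb{E}|b(s,X^{\ep}_{s},\mathscr{L}_{X^{\ep}_{s}},Y^{\ep}_{s})|^{2}\,ds.$$
Applying the squared form of \eref{RE1}, using $\mathscr{L}_{X^{\ep}_{s}}(|\cdot|^{2})=\mathbb{E}|X^{\ep}_{s}|^{2}$, and invoking Lemma \ref{PMY} (whose fourth-moment bounds in particular give uniform-in-$s,\ep$ second-moment bounds), this term is at most $C_{T}(1+|x|^{2}+|y|^{2})h^{2}$, which is $\leq C_{T}(1+|x|^{2}+|y|^{2})h$ since $h\leq T$. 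For the stochastic integral, Itô's isometry yields
$$\mathbb{E}\Big|\int_{t}^{t+h}\sigma(s,X^{\ep}_{s},\mathscr{L}_{X^{\ep}_{s}})\,dW^{1}_{s}\Big|^{2}=\int_{t}^{t+h}\mathbb{E}\|\sigma(s,X^{\ep}_{s},\mathscr{L}_{X^{\ep}_{s}})\|^{2}\,ds,$$
and \eref{RE1} together with Lemma \ref{PMY} bounds the integrand uniformly, contributing $C_{T}(1+|x|^{2}+|y|^{2})h$. Adding the two estimates gives the claim.

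There is no genuine obstacle here; the estimate is routine once Lemma \ref{PMY} is in hand. The only points worth keeping in mind are that the distribution argument $\mathscr{L}_{X^{\ep}_{s}}$ is controlled through $\mathbb{E}|X^{\ep}_{s}|^{2}$, that the drift bound also requires the second moment of the fast component $Y^{\ep}_{s}$, and that the extra factor $h$ from the drift estimate is absorbed using $h\leq T$. Note also that the bound is uniform in $\ep\in(0,1)$, since the $1/\ep$ and $1/\sqrt{\ep}$ prefactors appear only in the fast equation, not in the slow one.
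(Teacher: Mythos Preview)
Your proposal is correct and follows essentially the same approach as the paper: decompose the increment via the integral form of the slow equation, bound the drift term by Cauchy--Schwarz in time and the stochastic term by It\^o's isometry, then apply the linear growth \eref{RE1} and the moment bounds of Lemma \ref{PMY}. The only cosmetic difference is that you explicitly note the drift contributes $h^{2}$ and then absorb one factor of $h$ using $h\leq T$, whereas the paper simply reports the final $h$ bound.
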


\begin{proof}
It is easy to see that
\begin{eqnarray}
X_{t+h}^{\ep}-X_{t}^{\ep}=\int^{t+h}_t b(s, X^{\ep}_s, \mathscr{L}_{X_{s}^{\ep}}, Y^{\ep}_s)ds+\int^{t+h}_t\sigma(s, X^{\ep}_s, \mathscr{L}_{X_{s}^{\ep}})dW^1_s.\nonumber
\end{eqnarray}
Then by estimate \eref{RE1} and Lemma \ref{PMY}, we obtain
\begin{eqnarray*}
\EE|X_{t+h}^{\ep}-X_{t}^{\ep}|^2\leq &&\!\!\!\!\!\!\!\!C\EE\left|\int^{t+h}_t b(s, X^{\ep}_s, \mathscr{L}_{X_{s}^{\ep}},Y^{\ep}_s)ds\right|^2+C\EE\left|\int^{t+h}_t\sigma(s, X^{\ep}_s,\mathscr{L}_{X_{s}^{\ep}})dW^1_s\right|^2\\
\leq&& \!\!\!\!\!\!\!\!C \EE \left|\int^{t+h}_t|b(s, X^{\ep}_s, \mathscr{L}_{X_{s}^{\ep}},Y^{\ep}_s)| ds\right|^2+C\int^{t+h}_t \EE\|\sigma(s, X^{\ep}_s, \mathscr{L}_{X_{s}^{\ep}})\|^2ds\\
\leq &&\!\!\!\!\!\!\!\! C_T h\EE\int^{t+h}_t \!\!(1+|X^{\ep}_s|^{2}+| Y^{\ep}_s|^{2}+\EE|X_{s}^{\ep}|^2) ds\!+\!C_T\int^{t+h}_t\!\! \EE(1+|X^{\ep}_s|^2+\EE|X_{s}^{\ep}|^2)ds\\
\leq &&\!\!\!\!\!\!\!\! C_{T}(1+|x|^2+|y|^2)h.
\end{eqnarray*}
The proof is complete.
\end{proof}

\vskip 0.2cm

\subsection{ Estimates for the auxiliary process $(\hat{X}_{t}^{\ep},\hat{Y}_{t}^{\ep})$}

Following the idea of Khasminskii in \cite{K1},
we introduce an auxiliary process $(\hat{X}_{t}^{\ep},\hat{Y}_{t}^{\ep})\in \RR^n\times \RR^m$
and divide $[0,T]$ into intervals of size $\delta$, where $\delta$ is a fixed positive number depending on $\ep$, which will be chosen later.
We construct a process $\hat{Y}_{t}^{\ep}$ with initial value $\hat{Y}_{0}^{\ep}=Y^{\ep}_{0}=y$ such that for $t\in[k\delta,\min((k+1)\delta,T)]$,
\begin{eqnarray*}
\hat{Y}_{t}^{\ep}=\hat Y_{k\delta}^{\ep}+\frac{1}{\ep}\int_{k\delta}^{t}
f(k\delta, X_{k\delta}^{\ep},\mathscr{L}_{X_{k\delta}^{\ep}},\hat{Y}_{s}^{\ep})ds+\frac{1}{\sqrt{\ep}}\int_{k\delta}^{t}g(k\delta, X_{k\delta}^{\ep},\mathscr{L}_{X_{k\delta}^{\ep}},\hat{Y}_{s}^{\ep})dW^{2}_s,
\end{eqnarray*}
i.e.,
\begin{eqnarray*}
\hat{Y}_{t}^{\ep}=y+\frac{1}{\ep}\int_{0}^{t} f(s(\delta), X_{s(\delta)}^{\ep},\mathscr{L}_{X_{s(\delta)}^{\ep}},\hat{Y}_{s}^{\ep})ds+\frac{1}{\sqrt{\ep}}\int_{0}^{t}g(s(\delta), X_{s(\delta)}^{\ep},\mathscr{L}_{X_{s(\delta)}^{\ep}},\hat{Y}_{s}^{\ep})dW^{2}_s,
\end{eqnarray*}
where $s(\delta)=[{s}/{\delta}]\delta$, and $[{s}/{\delta}]$ is the integer part of ${s}/{\delta}$.  Also, we define the process $\hat{X}_{t}^{\ep}$ by
$$
\hat{X}_{t}^{\ep}=x+\int_{0}^{t}b(s(\delta), X_{s(\delta)}^{\ep}, \mathscr{L}_{X_{s(\delta)}^{\ep}},\hat{Y}_{s}^{\ep})ds+\int_{0}^{t}\sigma(s, X^{\ep}_s, \mathscr{L}_{X_{s}^{\ep}})W^{1}_s.
$$

By the construction of $\hat{Y}_{t}^{\ep}$ and by similar argument as in the proof of Lemma \ref{PMY}, it is easy to obtain the following estimates we omit whose proof here.
\begin{lemma} \label{MDY}
For any $T>0$, there exists  a constant $C_{T}>0$ such that
\begin{eqnarray*}
\sup_{\ep\in(0,1)}\sup_{t\in [0, T]}\mathbb{E}|\hat Y_{t}^{\ep}|^{4}\leq C_{T}(1+|x|^{4}+|y|^{4}).\label{Y}
\end{eqnarray*}
\end{lemma}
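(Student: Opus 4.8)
The plan is to mimic the proof of Lemma \ref{PMY}, but now applied to the frozen-coefficient auxiliary process $\hat{Y}^{\ep}_t$. The key point is that although the coefficients $f$ and $g$ in the definition of $\hat{Y}^{\ep}_t$ are evaluated at the frozen slow data $(s(\delta), X^{\ep}_{s(\delta)}, \mathscr{L}_{X^{\ep}_{s(\delta)}})$ rather than at the current values, the crucial structural dissipativity estimate \eref{RE3} still applies verbatim to $f(s(\delta), X^{\ep}_{s(\delta)}, \mathscr{L}_{X^{\ep}_{s(\delta)}}, \hat{Y}^{\ep}_t)$ and $g(s(\delta), X^{\ep}_{s(\delta)}, \mathscr{L}_{X^{\ep}_{s(\delta)}}, \hat{Y}^{\ep}_t)$ since that estimate is uniform in $(t,x,\mu)$. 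So the frozen slow variables only enter through the lower-order terms $C_T\{1+|X^{\ep}_{s(\delta)}| + [\mathscr{L}_{X^{\ep}_{s(\delta)}}(|\cdot|^2)]^{1/2}\}$, which are controlled by the fourth-moment bound on $X^{\ep}$ already established in Lemma \ref{PMY}.

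Concretely, I would apply It\^o's formula to $|\hat{Y}^{\ep}_t|^4$, take expectations (the martingale term vanishes after a standard localization argument using Lemma \ref{PMY} to justify integrability), and obtain
\[
\frac{d}{dt}\EE|\hat{Y}^{\ep}_t|^4 \leq \frac{1}{\ep}\EE\Big[4|\hat{Y}^{\ep}_t|^2\langle f(t(\delta), X^{\ep}_{t(\delta)}, \mathscr{L}_{X^{\ep}_{t(\delta)}}, \hat{Y}^{\ep}_t), \hat{Y}^{\ep}_t\rangle + 6|\hat{Y}^{\ep}_t|^2\|g(t(\delta), X^{\ep}_{t(\delta)}, \mathscr{L}_{X^{\ep}_{t(\delta)}}, \hat{Y}^{\ep}_t)\|^2\Big].
\]
Invoking \eref{RE3} with $(t,x,\mu) = (t(\delta), X^{\ep}_{t(\delta)}, \mathscr{L}_{X^{\ep}_{t(\delta)}})$ bounds the right-hand side by
\[
-\frac{\beta}{\ep}\EE|\hat{Y}^{\ep}_t|^4 + \frac{C_T}{\ep}\big(\EE|X^{\ep}_{t(\delta)}|^4 + 1\big) \leq -\frac{\beta}{\ep}\EE|\hat{Y}^{\ep}_t|^4 + \frac{C_T}{\ep}\big(1 + |x|^4 + |y|^4\big),
\]
where the last inequality uses Lemma \ref{PMY}. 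Then the comparison principle (exactly as in \eref{I3.9}) gives
\[
\EE|\hat{Y}^{\ep}_t|^4 \leq |y|^4 e^{-\beta t/\ep} + \frac{C_T}{\ep}\int_0^t e^{-\beta(t-s)/\ep}\big(1 + |x|^4 + |y|^4\big)\,ds \leq C_T\big(1 + |x|^4 + |y|^4\big),
\]
uniformly in $\ep \in (0,1)$ and $t \in [0,T]$, which is the assertion.

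I do not anticipate any serious obstacle here; the lemma is genuinely routine given Lemma \ref{PMY} and the dissipativity estimate \eref{RE3}, which is precisely why the authors say they omit the proof. The only mild technical point worth care is the localization justifying that the stochastic integral term has zero expectation: one should stop at $\tau_R = \inf\{t : |\hat{Y}^{\ep}_t| \geq R\}$, run the above argument to get a bound on $\EE|\hat{Y}^{\ep}_{t\wedge\tau_R}|^4$ uniform in $R$, and then let $R \to \infty$ by Fatou's lemma. One also needs an a priori finiteness of the moments to start this off, but that follows from the global Lipschitz/linear growth of $f,g$ (via \eref{RE2}) and standard SDE theory, together with the already-established moment bounds on $X^{\ep}$ that feed into the frozen coefficients.
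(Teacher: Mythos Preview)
Your proposal is correct and follows exactly the approach the paper intends: the authors omit the proof of this lemma, stating only that it follows ``by similar argument as in the proof of Lemma \ref{PMY},'' and you have carried out precisely that argument---It\^o's formula on $|\hat Y^{\ep}_t|^4$, the dissipativity bound \eref{RE3} applied with the frozen slow data, the already-established fourth-moment bound on $X^{\ep}$ from Lemma \ref{PMY}, and the comparison/Gronwall step as in \eref{I3.9}. The localization remark and the observation that the frozen arguments $(t(\delta), X^{\ep}_{t(\delta)}, \mathscr{L}_{X^{\ep}_{t(\delta)}})$ enter only through the lower-order term are exactly the right points to flag.
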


\vskip 0.2cm
Now, we intend to estimate the difference process $Y_{t}^{\ep}-\hat{Y}_{t}^{\ep}$ and furthermore the difference process $X^{\ep}_t-\hat{X}_{t}^{\ep}$.
\begin{lemma} \label{DEY} For any $T>0$, there exists a constant $C_{T}>0$ such that
$$
\sup_{\ep\in(0,1)}\sup_{t\in[0,T]}\mathbb{E}|Y_{t}^{\ep}-\hat{Y}_{t}^{\ep}|^{2}\leq C_{T}(1+|x|^2+|y|^2)\delta.
$$
\end{lemma}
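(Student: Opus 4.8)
The plan is to run a Gronwall-type argument on $\EE|Y^{\ep}_t-\hat Y^{\ep}_t|^2$ driven by the dissipativity condition \eref{sm}. Set $Z_t:=Y^{\ep}_t-\hat Y^{\ep}_t$, so $Z_0=0$, and subtract the integral equations for $Y^{\ep}$ and $\hat Y^{\ep}$ (they are driven by the same $W^2$); this gives
$$
dZ_t=\frac{1}{\ep}\big[f(t,X^{\ep}_t,\mathscr{L}_{X^{\ep}_t},Y^{\ep}_t)-f(t(\delta),X^{\ep}_{t(\delta)},\mathscr{L}_{X^{\ep}_{t(\delta)}},\hat Y^{\ep}_t)\big]dt+\frac{1}{\sqrt{\ep}}\big[g(t,X^{\ep}_t,\mathscr{L}_{X^{\ep}_t},Y^{\ep}_t)-g(t(\delta),X^{\ep}_{t(\delta)},\mathscr{L}_{X^{\ep}_{t(\delta)}},\hat Y^{\ep}_t)\big]dW^2_t .
$$
Applying It\^o's formula to $|Z_t|^2$ and taking expectations (the local martingale part is a genuine martingale by the fourth-moment bounds of Lemmas \ref{PMY} and \ref{MDY}) yields $\frac{d}{dt}\EE|Z_t|^2=\frac{2}{\ep}\EE\langle Z_t,\Delta f_t\rangle+\frac{1}{\ep}\EE\|\Delta g_t\|^2$, where $\Delta f_t$ and $\Delta g_t$ are the two bracketed increments above.

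The decisive step is to split both increments at the frozen coefficient. Write $\Delta f_t=A^f_t+B^f_t$ with $B^f_t:=f(t(\delta),X^{\ep}_{t(\delta)},\mathscr{L}_{X^{\ep}_{t(\delta)}},Y^{\ep}_t)-f(t(\delta),X^{\ep}_{t(\delta)},\mathscr{L}_{X^{\ep}_{t(\delta)}},\hat Y^{\ep}_t)$ and $A^f_t:=\Delta f_t-B^f_t$, and similarly $\Delta g_t=A^g_t+B^g_t$. The pair $(B^f_t,B^g_t)$ differs only in the $y$-slot, so condition \eref{sm} gives $\frac{2}{\ep}\langle Z_t,B^f_t\rangle+\frac{3}{\ep}\|B^g_t\|^2\leq-\frac{\beta}{\ep}|Z_t|^2$. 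The surplus factor $3$ (against the $1$ produced by It\^o) is exactly what absorbs the cross terms: from $\|A^g_t+B^g_t\|^2\leq 3\|B^g_t\|^2+\tfrac32\|A^g_t\|^2$ and Young's inequality $\frac{2}{\ep}\langle Z_t,A^f_t\rangle\leq\frac{\beta}{2\ep}|Z_t|^2+\frac{C}{\ep}|A^f_t|^2$ one arrives at
$$
\frac{d}{dt}\EE|Z_t|^2\leq-\frac{\beta}{2\ep}\EE|Z_t|^2+\frac{C_T}{\ep}\big(\EE|A^f_t|^2+\EE\|A^g_t\|^2\big).
$$

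It then remains to control the frozen-coefficient errors. By the Lipschitz bound \eref{A21}, $|A^f_t|+\|A^g_t\|\leq C_T\big(|t-t(\delta)|+|X^{\ep}_t-X^{\ep}_{t(\delta)}|+\mathbb{W}_2(\mathscr{L}_{X^{\ep}_t},\mathscr{L}_{X^{\ep}_{t(\delta)}})\big)$; bounding the Wasserstein distance by $\big(\EE|X^{\ep}_t-X^{\ep}_{t(\delta)}|^2\big)^{1/2}$, using $t-t(\delta)\leq\delta$ (and $\delta\leq1$, which we may assume) together with Lemma \ref{COX} gives $\EE|A^f_t|^2+\EE\|A^g_t\|^2\leq C_T(1+|x|^2+|y|^2)\delta$. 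Inserting this and applying the comparison theorem with $Z_0=0$ yields, uniformly in $t\in[0,T]$ and $\ep\in(0,1)$,
$$
\EE|Z_t|^2\leq\frac{C_T(1+|x|^2+|y|^2)\delta}{\ep}\int_0^t e^{-\beta(t-s)/(2\ep)}\,ds\leq\frac{2C_T}{\beta}(1+|x|^2+|y|^2)\delta ,
$$
which is the assertion after taking the supremum over $t$.

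The only genuine obstacle is taming the $1/\ep$ prefactors: a naive Gronwall estimate would leave a factor $\delta/\ep$, so it is essential that the dissipativity provides honest exponential contraction at rate $\beta/\ep$, making $\frac{1}{\ep}\int_0^t e^{-\beta(t-s)/(2\ep)}\,ds$ bounded by $2/\beta$; correspondingly one must keep the coefficient of $\|B^g_t\|^2$ at most $3$ when splitting $\|A^g_t+B^g_t\|^2$ so that \eref{sm} can still be invoked, and only then are the $\delta$-small errors $A^f_t,A^g_t$ of the right size.
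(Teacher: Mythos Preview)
Your proof is correct and follows essentially the same approach as the paper's: split the $f$- and $g$-increments into a $y$-only part where the dissipativity condition \eref{sm} applies and a frozen-coefficient remainder, use the factor $3$ in \eref{sm} to absorb the $g$ cross terms, and then apply a comparison argument exploiting the $-\beta/\ep$ contraction together with Lemma \ref{COX}. Your exposition is in fact more explicit than the paper's about why the coefficient $3$ in \eref{sm} is needed and how the $1/\ep$ prefactor is tamed by the exponential decay.
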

\begin{proof}
Note that
\begin{eqnarray*}
Y_{t}^{\ep}-\hat{Y}_{t}^{\ep}=\!\!\!\!\!\!\!\!&&\frac{1}{\ep}\int_{0}^{t} \left[f(s, X_{s}^{\ep}, \mathscr{L}_{X_{s}^{\ep}}, Y_{s}^{\ep})-f(s(\delta), X_{s(\delta)}^{\ep},\mathscr{L}_{X_{s(\delta)}^{\ep}},\hat{Y}_{s}^{\ep})\right]ds\\
&&+\frac{1}{\sqrt{\ep}}\int_{0}^{t}\left[g(s, X_{s}^{\ep},\mathscr{L}_{X_{s}^{\ep}}, Y_{s}^{\ep})-g(s(\delta), X_{s(\delta)}^{\ep},\mathscr{L}_{X_{s(\delta)}^{\ep}},\hat{Y}_{s}^{\ep})\right]dW^{2}_s.
\end{eqnarray*}

By It\^{o}'s formula, we have for any $t\in[0,T]$,
\begin{eqnarray*}
&&\mathbb{E}|Y_{t}^{\ep}-\hat{Y}_{t}^{\ep}|^{2}\\
=\!\!\!\!\!\!\!\!&&\frac{1}{\ep}\int^t_0\mathbb{E}\Big[2\langle f(s,X_{s}^{\ep},\mathscr{L}_{X_{s}^{\ep}}, Y_{s}^{\ep})-f(s(\delta), X_{s(\delta)}^{\ep},\mathscr{L}_{X_{s(\delta)}^{\ep}},\hat{Y}_{s}^{\ep}), Y_{s}^{\ep}-\hat{Y}_{s}^{\ep}\rangle\Big]ds \nonumber\\
&&+\frac{1}{\ep}\int^t_0\EE\|g(s, X_{s}^{\ep},\mathscr{L}_{X_{s}^{\ep}},Y_{s}^{\ep})-g(s(\delta), X_{s(\delta)}^{\ep},\mathscr{L}_{X_{s(\delta)}^{\ep}},\hat{Y}_{s}^{\ep})\|^2ds\nonumber\\
\leq\!\!\!\!\!\!\!\!&&\frac{1}{\ep}\int^t_0\mathbb{E}\Big[2\langle f(s, X_{s}^{\ep},\mathscr{L}_{X_{s}^{\ep}},Y_{s}^{\ep})-f(s, X_{s}^{\ep},\mathscr{L}_{X_{s}^{\ep}},\hat{Y}_{s}^{\ep}), Y_{s}^{\ep}-\hat{Y}_{s}^{\ep}\rangle\\
&&\quad\quad+3\|g(s, X_{s}^{\ep},\mathscr{L}_{X_{s}^{\ep}},Y_{s}^{\ep})-g(s, X_{s}^{\ep},\mathscr{L}_{X_{s}^{\ep}},\hat{Y}_{s}^{\ep})\|^2\Big]ds\\
&&+\frac{2}{\ep}\int^t_0\EE\left[\langle f(s, X_{s}^{\ep},\mathscr{L}_{X_{s}^{\ep}},\hat{Y}_{s}^{\ep})-f(s(\delta), X_{s(\delta)}^{\ep},\mathscr{L}_{X_{s(\delta)}^{\ep}},\hat{Y}_{s}^{\ep}), Y_{s}^{\ep}-\hat{Y}_{s}^{\ep}\rangle\right] ds \nonumber\\
&&+\frac{1}{3\ep}\EE\|g(s, X_{s}^{\ep},\mathscr{L}_{X_{s}^{\ep}}, \hat{Y}_{s}^{\ep})-g(s(\delta), X_{s(\delta)}^{\ep},\mathscr{L}_{X_{s(\delta)}^{\ep}},\hat{Y}_{s}^{\ep})\|^2ds.
\end{eqnarray*}
Then using the following estimate
$$\mathbb{W}_2(\mathscr{L}_{X_{s}^{\ep}},\mathscr{L}_{X_{s(\delta)}^{\ep}})^2\leq \EE|X_{s}^{\ep}-X_{s(\delta)}^{\ep}|^2$$
and the conditions \eref{A21},  \eref{sm}, there exists $\beta>0$ such that for any $t\in [0, T]$,
\begin{eqnarray*}
\frac{d}{dt}\mathbb{E}|Y_{t}^{\ep}-\hat{Y}_{t}^{\ep}|^{2}
\leq\!\!\!\!\!\!\!\!&&\frac{-\beta}{\ep}\mathbb{E}|Y_{t}^{\ep}-\hat{Y}_{t}^{\ep}|^2+\frac{C_T}{\ep}\EE\left[\delta^2+|X_{t}^{\ep}-X_{t(\delta)}^{\ep}|^2+\mathbb{W}_2(\mathscr{L}_{X_{t}^{\ep}},\mathscr{L}_{X_{t(\delta)}^{\ep}})^2\right]\nonumber\\
\leq\!\!\!\!\!\!\!\!&&
-\frac{\beta}{\ep}\mathbb{E}|Y_{t}^{\ep}-\hat{Y}_{t}^{\ep}|^{2}+\frac{C_T}{\ep}\mathbb{E}|X_{t}^{\ep}-X_{t(\delta)}^{\ep}|^2+\frac{C_T\delta^2}{\ep}.\nonumber
\end{eqnarray*}
Finally, the comparison theorem and Lemma \ref{COX} yield
\begin{eqnarray*}
\mathbb{E}|Y_{t}^{\ep}-\hat{Y}_{t}^{\ep}|^{2}\leq\!\!\!\!\!\!\!\!&& \frac{C_T}{\ep}\int^t_0 e^{-\frac{\beta(t-s)}{\ep}}\mathbb{E}|X_{s}^{\ep}-X_{s(\delta)}^{\ep}|^2ds+\frac{C_T\delta^2}{\ep}\int^t_0 e^{-\frac{\beta(t-s)}{\ep}}ds\\
\leq\!\!\!\!\!\!\!\!&&C_{T}(1+|x|^2+|y|^2)\delta.
\end{eqnarray*}
The proof is complete.
\end{proof}

\begin{lemma} \label{DEX} For any $T>0$, there exists a constant $C_{T}>0$ such that
\begin{eqnarray*}
\sup_{t\in[0,T]}\mathbb{E}|X_{t}^{\ep}-\hat{X}_{t}^{\ep}|^2\leq C_{T}(1+|x|^2+|y|^2)\delta.
\end{eqnarray*}
\end{lemma}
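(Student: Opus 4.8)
The plan is to estimate $X^{\ep}_t-\hat{X}^{\ep}_t$ directly from the integral representations of the two processes. Writing
\[
X^{\ep}_t-\hat{X}^{\ep}_t=\int_0^t\big[b(s,X^{\ep}_s,\mathscr{L}_{X^{\ep}_s},Y^{\ep}_s)-b(s(\delta),X^{\ep}_{s(\delta)},\mathscr{L}_{X^{\ep}_{s(\delta)}},\hat{Y}^{\ep}_s)\big]ds,
\]
since the stochastic integrals against $W^1$ coincide in both definitions (the diffusion term in $\hat{X}^{\ep}$ is left unmollified, $\int_0^t\sigma(s,X^{\ep}_s,\mathscr{L}_{X^{\ep}_s})dW^1_s$), so the martingale parts cancel exactly. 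Thus there is no It\^o term to control and one only needs a deterministic (Cauchy--Schwarz in time) bound on the drift difference.

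First I would add and subtract $b(s,X^{\ep}_s,\mathscr{L}_{X^{\ep}_s},\hat{Y}^{\ep}_s)$ and use the Lipschitz assumption \eref{A11} together with $\mathbb{W}_2(\mathscr{L}_{X^{\ep}_s},\mathscr{L}_{X^{\ep}_{s(\delta)}})^2\le\EE|X^{\ep}_s-X^{\ep}_{s(\delta)}|^2$ to bound the integrand, up to a constant, by
\[
|Y^{\ep}_s-\hat{Y}^{\ep}_s|+|s-s(\delta)|+|X^{\ep}_s-X^{\ep}_{s(\delta)}|+\big(\EE|X^{\ep}_s-X^{\ep}_{s(\delta)}|^2\big)^{1/2}.
\]
Then by Cauchy--Schwarz in the time integral and taking expectations,
\[
\EE|X^{\ep}_t-\hat{X}^{\ep}_t|^2\le C_T\int_0^T\Big(\EE|Y^{\ep}_s-\hat{Y}^{\ep}_s|^2+\delta^2+\EE|X^{\ep}_s-X^{\ep}_{s(\delta)}|^2\Big)ds.
\]
Now Lemma \ref{DEY} controls $\EE|Y^{\ep}_s-\hat{Y}^{\ep}_s|^2$ by $C_T(1+|x|^2+|y|^2)\delta$, and Lemma \ref{COX} (applied with $t=s(\delta)$, $h=s-s(\delta)\le\delta$) controls $\EE|X^{\ep}_s-X^{\ep}_{s(\delta)}|^2$ by $C_T(1+|x|^2+|y|^2)\delta$. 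Since $\delta^2\le\delta$ for $\delta\le1$, all three terms are $O(\delta)$ uniformly in $s\in[0,T]$ and $\ep\in(0,1)$, and integrating over $[0,T]$ gives the claimed bound $\sup_{t\in[0,T]}\EE|X^{\ep}_t-\hat{X}^{\ep}_t|^2\le C_T(1+|x|^2+|y|^2)\delta$.

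I do not expect a real obstacle here: the whole point of constructing $\hat{X}^{\ep}$ with the \emph{same} (unfrozen) diffusion coefficient is precisely so that this lemma reduces to a routine Gronwall-free estimate, with the two genuinely nontrivial inputs — the ergodic-type bound on $Y^{\ep}-\hat{Y}^{\ep}$ and the time-continuity of $X^{\ep}$ — already isolated as Lemmas \ref{DEY} and \ref{COX}. The only minor point to be careful about is that one should not apply Gronwall (the right-hand side does not contain $\EE|X^{\ep}_s-\hat{X}^{\ep}_s|^2$), and that the bound is genuinely uniform in $\ep$ because the constants in Lemmas \ref{COX} and \ref{DEY} are.
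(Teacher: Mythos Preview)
Your proof is correct and follows essentially the same approach as the paper: write $X^{\ep}_t-\hat X^{\ep}_t$ as a pure drift integral (the $W^1$-integrals cancel by construction), apply Cauchy--Schwarz in time, use the Lipschitz condition \eref{A11} together with $\mathbb{W}_2(\mathscr{L}_{X^{\ep}_s},\mathscr{L}_{X^{\ep}_{s(\delta)}})^2\le\EE|X^{\ep}_s-X^{\ep}_{s(\delta)}|^2$, and then invoke Lemmas \ref{COX} and \ref{DEY}. The only cosmetic difference is that you explicitly add and subtract $b(s,X^{\ep}_s,\mathscr{L}_{X^{\ep}_s},\hat Y^{\ep}_s)$ before applying Lipschitz, whereas the paper applies \eref{A11} to all four arguments at once; the resulting estimates are identical.
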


\begin{proof}
Recall that
\begin{eqnarray*}
X^{\ep}_t=x+\int^t_0 b(s, X^{\ep}_s, \mathscr{L}_{X^{\ep}_s},Y^{\ep}_s)ds+\int^t_0\sigma(s, X^{\ep}_s,\mathscr{L}_{X^{\ep}_s})dW^{1}_s
\end{eqnarray*}
and that
\begin{eqnarray*}
\hat{X}^{\ep}_t=x+\int^t_0 b(s(\delta), X^{\ep}_{s(\delta)}, \mathscr{L}_{X^{\ep}_{s(\delta)}},\hat{Y}^{\ep}_s)ds+\int^t_0\sigma(s, X^{\ep}_s,\mathscr{L}_{X^{\ep}_s})dW^{1}_s.
\end{eqnarray*}
Then we have
\begin{eqnarray*}
X^{\ep}_t-\hat{X}^{\ep}_t=\int^t_0\big[b(s, X^{\ep}_s, \mathscr{L}_{X^{\ep}_s}, Y^{\ep}_s)-b(s(\delta), X^{\ep}_{s(\delta)}, \mathscr{L}_{X^{\ep}_{s(\delta)}},\hat{Y}^{\ep}_s)\big]ds.
\end{eqnarray*}
By Lemmas \ref{COX} and \ref{DEY}, we obtain
\begin{eqnarray*}
\sup_{t\in[0,T]}\EE|X^{\ep}_{t} -\hat{X}^{\ep}_{t}|^2\leq\!\!\!\!\!\!\!\!&&\EE\left[\int^{T}_0\left|b(s, X^{\ep}_s, \mathscr{L}_{X^{\ep}_s},Y^{\ep}_s)-b(s(\delta), X^{\ep}_{s(\delta)}, \mathscr{L}_{X^{\ep}_{s(\delta)}},\hat{Y}^{\ep}_s)\right| ds\right]^2\nonumber\\
\leq\!\!\!\!\!\!\!\!&& C_{T}\EE\int^{T}_0 (\delta^2+|X^{\ep}_s-X^{\ep}_{s(\delta)}|^2+\mathbb{W}_2(\mathscr{L}_{X^{\ep}_s},\mathscr{L}_{X^{\ep}_{s(\delta)}})^2+|Y^{\ep}_s-\hat Y^{\ep}_s|^2)ds\\
\leq\!\!\!\!\!\!\!\!&& C_{T}(1+|x|^2+|y|^2)\delta. \label{bs}
\end{eqnarray*}
The proof is complete.
\end{proof}

\subsection{The frozen equation}
We first introduce the frozen equation associated to the fast motion for fixed $t\geq 0, x\in \RR^n$ and $\mu\in\mathscr{P}_2$,
 \begin{equation}\left\{\begin{array}{l}\label{FEQ}
\displaystyle
dY_{s}=f(t, x,\mu,Y_{s})dt+g(t, x,\mu,Y_{s})d\tilde{W}_{s}^{2},\\
Y_{0}=y,\\
\end{array}\right.
\end{equation}
where $\{\tilde {W}_{s}^{2}\}_{s\geq 0}$ is a $d_2$-dimensional Brownian motion on another complete probability space $(\tilde{\Omega}, \tilde{\mathscr{F}}, \tilde{\mathbb{P}})$ and $\{\tilde{\mathscr{F}}_{t},t\geq 0\}$ is the natural filtration generated by $\tilde{W}_{t}^{2}$.

Under the conditions \eref{A21} and \eref{sm}, it is easy to prove for any initial data $y\in \RR^m$ that
Eq.$(\ref{FEQ})$ has a unique strong solution $\{Y_{s}^{t,x,\mu,y}\}_{s\geq 0}$, which is a homogeneous Markov process. Moreover, for any $t\in[0,T]$, $\sup_{s\geq 0}\tilde \EE|Y_{s}^{t,x,\mu,y}|^2\leq C_T\left[1+|x|^2+|y|^2+\mu(|\cdot|^2)\right]$.

Let $\{P^{t,x,\mu}_s\}_{s\geq 0}$ be the transition semigroup of $Y_{s}^{t,x,\mu,y}$, i.e., for any bounded measurable function $\varphi:\RR^m\rightarrow \mathbb{R}$,
$$
P^{t,x,\mu}_s\varphi(y):=\tilde \EE\varphi(Y_{s}^{t,x,\mu,y}), \quad y\in\RR^m, s\geq 0,
$$
where $\tilde \EE$ is the expectation on $(\tilde{\Omega}, \tilde{\mathscr{F}}, \tilde{\mathbb{P}})$. Then e.g. by \cite[Theorem 4.3.9]{LR1}, under the assumption \ref{A1}, it is easy to see that $P^{t,x,\mu}_s$ has a unique invariant measure $\nu^{t,x,\mu}$ satisfying
$$\int_{\RR^m}|y|\nu^{t, x,\mu}(dy)\leq C_T\left\{1+|x|+[\mu(|\cdot|^2)]^{1/2}\right\}.$$

\begin{lemma}\label{L3.6}
For any $T>0$, $s>0$, $ t_i\in[0, T]$, $x_i\in\RR^n$, $\mu_i\in\mathscr{P}_2$ and $y_i\in\RR^m$, $i=1,2$, we have
\begin{eqnarray*}
\tilde{\mathbb{E}}|Y^{t_1, x_1,\mu_1,y_1}_s-Y^{t_2, x_2,\mu_2,y_2}_s|^2\leq e^{-\beta s}|y_1-y_2|^2+C_T\left[|t_1-t_2|^2+|x_1-x_2|^2+\mathbb{W}_2(\mu_1,\mu_2)^2\right].
\end{eqnarray*}
\end{lemma}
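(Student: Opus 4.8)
The plan is to compare the two frozen-equation solutions $Y^{t_1,x_1,\mu_1,y_1}_s$ and $Y^{t_2,x_2,\mu_2,y_2}_s$ directly by writing down the SDE satisfied by their difference and applying It\^o's formula to its squared Euclidean norm. Set $Z_s := Y^{t_1,x_1,\mu_1,y_1}_s - Y^{t_2,x_2,\mu_2,y_2}_s$. Using Eq.~\eref{FEQ} for each of the two parameter sets (both driven by the \emph{same} Brownian motion $\tilde W^2$), we get
\begin{eqnarray*}
dZ_s &=& \left[f(t_1,x_1,\mu_1,Y^{t_1,x_1,\mu_1,y_1}_s) - f(t_2,x_2,\mu_2,Y^{t_2,x_2,\mu_2,y_2}_s)\right]ds\\
&& +\left[g(t_1,x_1,\mu_1,Y^{t_1,x_1,\mu_1,y_1}_s) - g(t_2,x_2,\mu_2,Y^{t_2,x_2,\mu_2,y_2}_s)\right]d\tilde W^2_s,
\end{eqnarray*}
with $Z_0 = y_1 - y_2$. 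Then $\frac{d}{ds}\tilde\EE|Z_s|^2$ equals the expectation of $2\langle f_1 - f_2, Z_s\rangle + \|g_1 - g_2\|^2$ where the subscripts abbreviate the two arguments.

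The key step is to split the drift and diffusion increments by inserting intermediate terms so that the dissipativity condition \eref{sm} can be used for the part where only the $y$-variable changes, and the Lipschitz conditions \eref{A21} for the part where $(t,x,\mu)$ changes. Concretely, write $f_1 - f_2 = [f(t_1,x_1,\mu_1,Y^{(1)}_s) - f(t_1,x_1,\mu_1,Y^{(2)}_s)] + [f(t_1,x_1,\mu_1,Y^{(2)}_s) - f(t_2,x_2,\mu_2,Y^{(2)}_s)]$ and similarly for $g$. Applying the algebraic inequality $\|a+b\|^2 \le \tfrac32\|a\|^2 + 3\|b\|^2$ to the $g$-term and Young's inequality to the cross term in $\langle f_1-f_2, Z_s\rangle$, the leading contributions $2\langle f(t_1,x_1,\mu_1,Y^{(1)})-f(t_1,x_1,\mu_1,Y^{(2)}), Z_s\rangle + 3\|g(t_1,x_1,\mu_1,Y^{(1)})-g(t_1,x_1,\mu_1,Y^{(2)})\|^2 \le -\beta|Z_s|^2$ by \eref{sm}, while the remaining terms are controlled using \eref{A21} by $C_T[|t_1-t_2| + |x_1-x_2| + \mathbb{W}_2(\mu_1,\mu_2)]\cdot|Z_s| + C_T[|t_1-t_2|+|x_1-x_2|+\mathbb{W}_2(\mu_1,\mu_2)]^2$, and absorbing the linear-in-$|Z_s|$ term into $\tfrac{\beta}{2}|Z_s|^2$ via Young's inequality. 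This yields a differential inequality of the form
$$
\frac{d}{ds}\tilde\EE|Z_s|^2 \le -\frac{\beta}{2}\tilde\EE|Z_s|^2 + C_T\left[|t_1-t_2|^2 + |x_1-x_2|^2 + \mathbb{W}_2(\mu_1,\mu_2)^2\right].
$$

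Finally I would integrate this via Gronwall's inequality (or the comparison theorem used earlier in the paper), obtaining $\tilde\EE|Z_s|^2 \le e^{-\beta s/2}|y_1-y_2|^2 + \frac{2C_T}{\beta}[|t_1-t_2|^2+|x_1-x_2|^2+\mathbb{W}_2(\mu_1,\mu_2)^2]$, which is of the claimed form after renaming $\beta/2$ to $\beta$ and enlarging $C_T$ (or keeping the constant $\beta$ with a harmless adjustment). A minor technical point: the bound $\tilde\EE|Z_s|^2$ must be known to be finite a priori so that It\^o's formula and the differential inequality are legitimate, but this follows from the a priori second-moment bound on $Y^{t,x,\mu,y}_s$ already recorded before the statement. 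The only mild subtlety — hardly an obstacle — is bookkeeping the constants so that the exponential rate displayed in the statement is exactly $\beta$ rather than $\beta/2$; this is handled by noting \eref{sm} in fact gives room to split off a full $-\beta|Z_s|^2$ and still have a negative definite remainder, or simply by absorbing the discrepancy into $C_T$ and relabeling, so the stated form holds.
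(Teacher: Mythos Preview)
Your proof is correct and takes essentially the same approach as the paper: apply It\^o's formula to $|Z_s|^2$, split the $f$- and $g$-increments into a $y$-only part (controlled by the dissipativity condition \eref{sm}) and a $(t,x,\mu)$-part (controlled by the Lipschitz condition \eref{A21}), then close with the comparison theorem. Your remark about the exponent $\beta$ versus $\beta/2$ is well taken; the paper's own Young's-inequality step implicitly costs an arbitrarily small amount of the dissipation as well, and the precise rate is immaterial since the subsequent applications (e.g.\ Proposition~\ref{Rem 4.1}) only use $\beta/2$.
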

\begin{proof}
Note that
\begin{eqnarray*}
Y^{t_1,x_1,\mu_1,y_1}_s-Y^{t_2,x_2,\mu_2,y_2}_s=\!\!\!\!\!\!\!\!&&y_1-y_2+\int^s_0 f(t_1,x_1, \mu_1, Y^{t_1,x_1,\mu_1,y_1}_r)-f(t_2, x_2, \mu_2,Y^{t_2, x_2,\mu_2,y_2}_r)dr\\
&&+\int^s_0 g(t_1, x_1, \mu_1,Y^{t_1, x_1,\mu_1,y_1}_r)-g(t_2, x_2, \mu_2,Y^{t_2, x_2,\mu_2,y_2}_r)d \tilde {W}^2_r.
\end{eqnarray*}
By It\^{o}'s formula we have
\begin{eqnarray*}
&&\tilde{\mathbb{E}}|Y^{t_1,x_1,\mu_1,y_1}_s-Y^{t_2,x_2,\mu_2,y_2}_s|^2\\
=\!\!\!\!\!\!\!\!&&\int^s_0 \tilde{\mathbb{E}}\big[2\langle f(t_1, x_1, \mu_1,Y^{t_1,x_1,\mu_1,y_1}_r)-f(t_2,x_2, \mu_2,Y^{t_2, x_2,\mu_2,y_2}_r), Y^{t_1,x_1,\mu_1,y_1}_r-Y^{t_2, x_2,\mu_2,y_2}_r\rangle\\
&&+\|g(t_1, x_1, \mu_1,Y^{t_1,x_1,\mu_1,y_1}_r)-g(t_2, x_2, \mu_2,Y^{t_2,x_2,\mu_2,y_2}_r)\|^2\big]dr.
\end{eqnarray*}
Then by Young's inequality and conditions \eref{A21} and \eref{sm}, there exists $\beta>0$ such that
\begin{eqnarray*}
&&\frac{d}{ds}\tilde{\mathbb{E}}|Y^{t_1,x_1,\mu_1,y_1}_s-Y^{t_2,x_2,\mu_2,y_2}_s|^2\\
=\!\!\!\!\!\!\!\!&&\tilde{\mathbb{E}}\big[2\langle f(t_1, x_1, \mu_1,Y^{t_1,x_1,\mu_1,y_1}_s)-f(t_2,x_2, \mu_2,Y^{t_2, x_2,\mu_2,y_2}_s), Y^{t_1,x_1,\mu_1,y_1}_s-Y^{t_2, x_2,\mu_2,y_2}_s\rangle\\
&&+\|g(t_1, x_1, \mu_1,Y^{t_1,x_1,\mu_1,y_1}_s)-g(t_2, x_2, \mu_2,Y^{t_2,x_2,\mu_2,y_2}_s)\|^2\big]\\
\leq\!\!\!\!\!\!\!\!&&\tilde{\mathbb{E}}\big[2\left\langle f(t_1, x_1, \mu_1,Y^{t_1, x_1,\mu_1,y_1}_s)-f(t_1, x_1, \mu_1,Y^{t_2,x_2,\mu_2,y_2}_s), Y^{t_1,x_1,\mu_1,y_1}_s-Y^{t_2,x_2,\mu_2,y_2}_s\right\rangle\\
&&+3\left\|g(t_1,x_1, \mu_1,Y^{t_1,x_1,\mu_1,y_1}_s)-g(t_1, x_1, \mu_1,Y^{t_2, x_2,\mu_2,y_2}_s)\right\|^2\big]\\
&&+\tilde{\mathbb{E}}\left[2\left\langle f(t_1,x_1, \mu_1,Y^{t_2,x_2,\mu_2,y_2}_s)-f(t_2,x_2, \mu_2,Y^{t_2,x_2,\mu_2,y_2}_s), Y^{t_1,x_1,\mu_1,y_1}_s-Y^{t_2,x_2,\mu_2,y_2}_s\right\rangle\right]\\
&&+\frac{1}{3}\tilde{\mathbb{E}}\left\|g(t_1,x_1, \mu_1,Y^{t_2, x_2,\mu_2,y_2}_s)-g(t_2, x_2, \mu_2,Y^{t_2, x_2,\mu_2,y_2}_s)\right\|^2\\
\leq\!\!\!\!\!\!\!\!&& -\beta\tilde{\mathbb{E}}\left|Y^{t_1,x_1,\mu_1,y_1}_s-Y^{t_2,x_2,\mu_2,y_2}_s\right|^2+C_T\left[|t_1-t_2|^2+|x_1-x_2|^2+\mathbb{W}_2(\mu_1,\mu_2)^2\right].
\end{eqnarray*}
Hence, the comparison theorem yields for any $s\geq 0$,
\begin{eqnarray*}
\tilde{\mathbb{E}}|Y^{t_1,x_1,\mu_1,y_1}_s-Y^{t_2, x_2,\mu_2,y_2}_s|^2\leq e^{-\beta s}|y_1-y_2|^2+C_T\left[|t_1-t_2|^2+|x_1-x_2|^2+\mathbb{W}_2(\mu_1,\mu_2)^2\right].
\end{eqnarray*}
The proof is complete.
\end{proof}

\begin{proposition}\label{Rem 4.1}  For any $T>0$, $t\in[0,T], x\in\RR^n$, $\mu\in\mathscr{P}_2$, $s\geq 0$ and $y\in \RR^m$,
\begin{eqnarray}
\left|\tilde \EE b(t, x, \mu, Y^{t,x,\mu,y}_s)-\bar b(t,x,\mu)\right|\leq\!\!\!\!\!\!\!\!&& C_Te^{-\frac{\beta s}{2}}\left\{1+|x|+|y|+[\mu(|\cdot|^2)]^{1/2}\right\}, \label{ergodicity}
\end{eqnarray}
where $\bar b(t,x,\mu)=\int_{\RR^m}b(t, x,\mu, z)\nu^{t,x,\mu}(dz)$.
\end{proposition}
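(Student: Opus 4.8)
The plan is to combine the invariance of $\nu^{t,x,\mu}$ with the exponential contraction in the initial datum provided by Lemma \ref{L3.6}. Fix $t\in[0,T]$, $x\in\RR^n$, $\mu\in\mathscr{P}_2$ (these stay frozen throughout) and write $Y^{y}_s:=Y^{t,x,\mu,y}_s$ for brevity. First I would record the key identity: since $\nu^{t,x,\mu}$ is invariant for $\{P^{t,x,\mu}_s\}_{s\geq 0}$, one has for every $s\geq 0$
\begin{eqnarray*}
\bar b(t,x,\mu)=\int_{\RR^m}b(t,x,\mu,z)\,\nu^{t,x,\mu}(dz)=\int_{\RR^m}\tilde\EE\, b(t,x,\mu,Y^{z}_s)\,\nu^{t,x,\mu}(dz),
\end{eqnarray*}
where the second equality is the relation $\int P^{t,x,\mu}_s\varphi\,d\nu^{t,x,\mu}=\int\varphi\,d\nu^{t,x,\mu}$ applied to $\varphi=b(t,x,\mu,\cdot)$. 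Since $b$ has only linear growth in $y$ by \eref{A11}, this needs a routine truncation argument, using the uniform-in-$s$ bound $\sup_{s\geq 0}\tilde\EE|Y^{z}_s|^2\leq C_T[1+|x|^2+|z|^2+\mu(|\cdot|^2)]$ recalled above together with the first-moment bound for $\nu^{t,x,\mu}$ to pass to the limit.

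Subtracting this identity from $\tilde\EE\, b(t,x,\mu,Y^{y}_s)$, then using that $b$ is $C_T$-Lipschitz in $y$ (also part of \eref{A11}) followed by the Cauchy--Schwarz inequality, I would bound
\begin{eqnarray*}
\big|\tilde\EE\, b(t,x,\mu,Y^{y}_s)-\bar b(t,x,\mu)\big|\leq C_T\int_{\RR^m}\big(\tilde\EE|Y^{y}_s-Y^{z}_s|^2\big)^{1/2}\,\nu^{t,x,\mu}(dz).
\end{eqnarray*}
Now Lemma \ref{L3.6}, applied with $t_1=t_2=t$, $x_1=x_2=x$, $\mu_1=\mu_2=\mu$, $y_1=y$, $y_2=z$, makes the lower-order term vanish and yields $\tilde\EE|Y^{y}_s-Y^{z}_s|^2\leq e^{-\beta s}|y-z|^2$, so the right-hand side is at most $C_Te^{-\beta s/2}\int_{\RR^m}(|y|+|z|)\,\nu^{t,x,\mu}(dz)$.

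To conclude I would insert the first-moment estimate $\int_{\RR^m}|z|\,\nu^{t,x,\mu}(dz)\leq C_T\{1+|x|+[\mu(|\cdot|^2)]^{1/2}\}$ stated just before Lemma \ref{L3.6}, which turns the last bound into $C_Te^{-\beta s/2}\{1+|x|+|y|+[\mu(|\cdot|^2)]^{1/2}\}$, exactly as claimed in \eref{ergodicity}. There is no genuine difficulty in this argument; the only step meriting attention is the justification of the invariance identity for the unbounded (linearly growing) integrand $b(t,x,\mu,\cdot)$, and that is handled by the standard truncation together with the moment bounds already established for $Y^{z}_s$ and for $\nu^{t,x,\mu}$. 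Everything else is a direct combination of \eref{A11}, Lemma \ref{L3.6} and Cauchy--Schwarz.
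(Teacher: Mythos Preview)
Your proposal is correct and follows essentially the same route as the paper: represent $\bar b$ via the invariance of $\nu^{t,x,\mu}$, subtract, use the Lipschitz property of $b$ in $y$ together with Lemma \ref{L3.6} to get the factor $e^{-\beta s/2}|y-z|$, and finish with the first-moment bound on $\nu^{t,x,\mu}$. The only cosmetic difference is that you pass through Cauchy--Schwarz and are explicit about the truncation needed to justify the invariance identity for the linearly growing integrand, whereas the paper leaves these implicit.
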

\begin{proof}
By the definition of an invariant measure and Lemma \ref{L3.6},  for any $s\geq 0$ we have
\begin{eqnarray*}
\left|\tilde \EE b(t, x, \mu, Y^{t,x,\mu,y}_s)-\bar b(t,x,\mu)\right|=\!\!\!\!\!\!\!\!&&\left|\tilde \EE b(t, x, \mu, Y^{t,x,\mu,y}_s)-\int_{\RR^m}b(t, x,\mu, z)\nu^{t,x,\mu}(dz)\right|\\
\leq\!\!\!\!\!\!\!\!&& \left|\int_{\RR^m}\left[\tilde \EE b(t, x, \mu,Y^{t,x,\mu,y}_s)-\tilde \EE b(t, x, \mu,Y^{t,x,\mu,z}_s)\right]\nu^{t,x,\mu}(dz)\right|\\
\leq\!\!\!\!\!\!\!\!&& C_T\int_{\RR^m}\tilde \EE\left| Y^{t,x,\mu,y}_s-Y^{t,x,\mu,z}_s\right|\nu^{t,x,\mu}(dz)\\
\leq\!\!\!\!\!\!\!\!&& C_T e^{-\frac{\beta s}{2}}\int_{\RR^m}|y-z|\nu^{t,x,\mu}(dz)\\
\leq\!\!\!\!\!\!\!\!&&C_T e^{-\frac{\beta s}{2}}\left\{1+|x|+|y|+[\mu(|\cdot|^2)]^{1/2}\right\}.
\end{eqnarray*}
The proof is complete.
\end{proof}

\vskip 0.2cm
\subsection{The averaged equation}
We can introduce the averaged equation as follows,
\begin{equation}\left\{\begin{array}{l}
\displaystyle d\bar{X}_{t}=\bar{b}(t, \bar{X}_{t}, \mathscr{L}_{\bar{X}_{t}})dt+\sigma(t, \bar{X}_{t}, \mathscr{L}_{\bar{X}_{t}})dW^{1}_t,\\
\bar{X}_{0}=x\in \RR^n,\end{array}\right. \label{3.1}
\end{equation}
with
\begin{eqnarray*}
\bar{b}(t,x,\mu)=\int_{\RR^m}b(t,x,\mu,z)\nu^{t,x,\mu}(dz),
\end{eqnarray*}
where $\nu^{t, x,\mu}$ is the unique invariant measure for Eq.(\ref{FEQ}).

\vspace{0.2cm}
The following lemma gives the existence, uniqueness and uniformly estimates for the solution of Eq. \eref{3.1}, whose proof will be presented in the Appendix.

\begin{lemma} \label{PMA} For any $x\in\RR^n$, Eq.(\ref{3.1}) has a unique solution $\bar{X}_t$. Moreover, for any $T>0$, there exists a constant $C_{T}>0$ such that
\begin{eqnarray}
\sup_{t\in [0, T]}\mathbb{E}|\bar{X}_{t}|^{2}\leq C_{T}(1+|x|^{2}).\label{3.9}
\end{eqnarray}
\end{lemma}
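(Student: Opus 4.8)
The plan is to treat Eq.\eref{3.1} as a standard McKean--Vlasov SDE and verify the hypotheses of a classical well-posedness result (e.g. \cite{WFY}). The only non-trivial input is that the averaged drift $\bar b(t,x,\mu)$ inherits sufficient regularity from $b$ and from the family of invariant measures $\nu^{t,x,\mu}$. So the first step is to establish that $\bar b$ is Lipschitz in $(x,\mu)$ and of linear growth. For the growth bound, one simply integrates \eref{RE1} against $\nu^{t,x,\mu}$ and uses the moment estimate $\int_{\RR^m}|z|\,\nu^{t,x,\mu}(dz)\leq C_T\{1+|x|+[\mu(|\cdot|^2)]^{1/2}\}$ recorded just after Eq.\eref{FEQ}, giving $|\bar b(t,x,\mu)|\leq C_T\{1+|x|+[\mu(|\cdot|^2)]^{1/2}\}$. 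For the Lipschitz bound, I would write, for $(x_1,\mu_1)$ and $(x_2,\mu_2)$,
\[
|\bar b(t,x_1,\mu_1)-\bar b(t,x_2,\mu_2)|\leq \Big|\int b(t,x_1,\mu_1,z)\,\nu^{t,x_1,\mu_1}(dz)-\int b(t,x_2,\mu_2,z)\,\nu^{t,x_1,\mu_1}(dz)\Big|+\Big|\int b(t,x_2,\mu_2,z)\,[\nu^{t,x_1,\mu_1}-\nu^{t,x_2,\mu_2}](dz)\Big|,
\]
bound the first term directly by \eref{A11}, and for the second term use the exponential contraction of the frozen semigroup together with Proposition \ref{Rem 4.1}: writing $\bar b$ as a long-time limit, $\bar b(t,x,\mu)=\lim_{s\to\infty}\tilde\EE b(t,x,\mu,Y^{t,x,\mu,y}_s)$, and applying Lemma \ref{L3.6} to control $\tilde\EE|Y^{t,x_1,\mu_1,y}_s-Y^{t,x_2,\mu_2,y}_s|$, the difference of the two integrals is bounded by $C_T[|x_1-x_2|+\mathbb{W}_2(\mu_1,\mu_2)]$ uniformly in $s$, hence in the limit. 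This yields $|\bar b(t,x_1,\mu_1)-\bar b(t,x_2,\mu_2)|\leq C_T[|x_1-x_2|+\mathbb{W}_2(\mu_1,\mu_2)]$.

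Once $\bar b$ is known to be Lipschitz and of linear growth in $(x,\mu)$ (with the same coefficient $\sigma$, which is already Lipschitz by \eref{A11}), existence and uniqueness of a strong solution $\bar X_t$ to Eq.\eref{3.1} follows from the standard theory for distribution-dependent SDEs under Lipschitz coefficients; I would cite \cite{WFY} for this, exactly as in the proof of Theorem \ref{main}. Note that continuity in $t$ of $\bar b(t,\cdot,\cdot)$ follows along the same lines using the $|t_1-t_2|$ term in Lemma \ref{L3.6}, which is enough to run the fixed-point argument on $C([0,T];L^2(\Omega))$.

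For the moment bound \eref{3.9}, apply It\^o's formula to $|\bar X_t|^2$, take expectations, and use the linear growth of $\bar b$ and $\sigma$ together with $\mathscr{L}_{\bar X_t}(|\cdot|^2)=\EE|\bar X_t|^2$: this produces
\[
\EE|\bar X_t|^2\leq |x|^2+C_T\int_0^t\big(1+\EE|\bar X_s|^2\big)\,ds,
\]
and Gronwall's inequality gives $\sup_{t\in[0,T]}\EE|\bar X_t|^2\leq C_T(1+|x|^2)$.

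I expect the main obstacle to be the Lipschitz estimate for $\bar b$ in the measure argument $\mu$: one must pass the $\mathbb{W}_2$-dependence through the invariant measure $\nu^{t,x,\mu}$, which is only accessible via the ergodic representation of $\bar b$ as a limit of the frozen semigroup. The key is that Lemma \ref{L3.6} provides the contraction estimate with a constant depending on $(t,x,\mu)$ only through the stated Lipschitz terms and not through $y$, so the dependence on the initial condition of the frozen equation washes out in the limit $s\to\infty$; everything else is routine.
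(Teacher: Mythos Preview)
Your proposal is correct and follows essentially the same route as the paper: verify that $\bar b$ is Lipschitz in $(t,x,\mu)$ and of linear growth by representing $\bar b$ as the long-time limit of $\tilde\EE b(t,x,\mu,Y^{t,x,\mu,y}_s)$ and invoking Proposition~\ref{Rem 4.1} and Lemma~\ref{L3.6}, then apply \cite[Theorem~4.1]{WFY} for well-posedness and a standard It\^o/Gronwall argument for the moment bound. The only cosmetic difference is that the paper inserts the finite-time quantity $\tilde\EE b(t_i,x_i,\mu_i,Y^{t_i,x_i,\mu_i,0}_s)$ for both parameter sets simultaneously and lets $s\to\infty$ (see \eref{4.16}), whereas you first split off the integrand difference against a fixed invariant measure; both decompositions reduce to the same two ingredients.
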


Now, we estimate the error between the auxiliary process $\hat{X}_{t}^{\ep}$ and the solution $\bar{X}_{t}$ of the averaged equation .
\begin{lemma} \label{ESX} For any $T>0$, there exists a constant $C_{T}>0$ such that
\begin{eqnarray*}
\sup_{t\in[0,T]}\mathbb{E}|\hat{X}_{t}^{\ep}-\bar{X}_{t}|^2\leq C_{T}(1+|x|^3+|y|^3)\left(\frac{\ep}{\delta^{1/2}}+\ep+\frac{\ep^2}{\delta}+\delta\right).
\end{eqnarray*}
\end{lemma}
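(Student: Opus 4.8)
The plan is to bound $\hat X^\ep_t-\bar X_t$ by a Gronwall argument after carefully estimating the ``frozen-in-time'' averaging error on each discretization block. Writing
\[
\hat X^\ep_t-\bar X_t=\int_0^t\!\big[b(s(\delta),X^\ep_{s(\delta)},\mathscr{L}_{X^\ep_{s(\delta)}},\hat Y^\ep_s)-\bar b(s,\bar X_s,\mathscr{L}_{\bar X_s})\big]ds,
\]
I would insert $\pm\bar b(s(\delta),X^\ep_{s(\delta)},\mathscr{L}_{X^\ep_{s(\delta)}})$ and split the integrand into (i) $b(s(\delta),X^\ep_{s(\delta)},\mathscr{L}_{X^\ep_{s(\delta)}},\hat Y^\ep_s)-\bar b(s(\delta),X^\ep_{s(\delta)},\mathscr{L}_{X^\ep_{s(\delta)}})$ and (ii) $\bar b(s(\delta),X^\ep_{s(\delta)},\mathscr{L}_{X^\ep_{s(\delta)}})-\bar b(s,\bar X_s,\mathscr{L}_{\bar X_s})$. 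Term (ii) is the ``easy'' one: using the Lipschitz regularity of $\bar b$ (which follows from Lemma~\ref{L3.6} and Proposition~\ref{Rem 4.1}, or is recorded earlier), together with $|s-s(\delta)|\le\delta$, Lemma~\ref{COX}, Lemma~\ref{DEX}, and $\mathbb{W}_2(\mathscr{L}_{X^\ep_{s(\delta)}},\mathscr{L}_{\bar X_s})^2\le\EE|X^\ep_{s(\delta)}-\bar X_s|^2$, it is controlled by $C_T\delta$ plus $C_T\int_0^t\EE|\hat X^\ep_s-\bar X_s|^2ds$ (after also using Lemma~\ref{DEX} once more to pass between $X^\ep_{s(\delta)}$ and $\hat X^\ep_s$). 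This feeds directly into Gronwall.

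The heart of the proof is term (i), the time-discretized ergodic average. Following Khasminskii's technique, I would estimate
\[
\EE\Big|\int_0^t\!\big[b(s(\delta),X^\ep_{s(\delta)},\mathscr{L}_{X^\ep_{s(\delta)}},\hat Y^\ep_s)-\bar b(s(\delta),X^\ep_{s(\delta)},\mathscr{L}_{X^\ep_{s(\delta)}})\big]ds\Big|^2
\]
by summing over the blocks $[k\delta,(k+1)\delta)$ and, on each block, comparing $\hat Y^\ep_s$ with the frozen process driven by the coefficients evaluated at $(k\delta,X^\ep_{k\delta},\mathscr{L}_{X^\ep_{k\delta}})$. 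After a time rescaling $s\mapsto (s-k\delta)/\ep$ this becomes a genuine ergodic average over a time horizon of order $\delta/\ep$, so Proposition~\ref{Rem 4.1} gives an exponentially small-in-$\delta/\ep$ bound on the conditional expectation $\tilde\EE[\,b(k\delta,X^\ep_{k\delta},\mathscr{L}_{X^\ep_{k\delta}},\cdot)\,]$ against the invariant measure. One then couples $\hat Y^\ep_s$ (which sees $X^\ep_{s(\delta)}$, $\mathscr{L}_{X^\ep_{s(\delta)}}$ on the $k$-th block, i.e.\ constant there) to the stationary frozen dynamics; the construction of $\hat Y^\ep$ is precisely designed so that on each block it solves the frozen SDE with frozen data, modulo a coupling of initial conditions. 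Squaring the block sum produces diagonal terms $O((\delta)^2)$ per block (hence $O(\delta)$ total after multiplying by the number $T/\delta$ of blocks and by another $\delta$) and cross terms which, via the Markov property and the exponential mixing of Proposition~\ref{Rem 4.1}, contribute the genuinely new scales; a careful bookkeeping yields exactly the combination $\ep/\delta^{1/2}+\ep+\ep^2/\delta$. The moment bounds of Lemma~\ref{PMY}, Lemma~\ref{MDY} supply the prefactor $1+|x|^3+|y|^3$ (the cubic power coming from a Cauchy–Schwarz split between a second moment and the square root of a fourth moment).

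I would organize the block estimate as: fix $k$, condition on $\mathscr{F}_{k\delta}$, rescale time, write the block contribution as $\ep\int_0^{\delta/\ep}\big[b(k\delta,\cdot,\cdot,\tilde Y_r)-\bar b(k\delta,\cdot,\cdot)\big]dr$ where $\tilde Y$ is the frozen process with (random but $\mathscr{F}_{k\delta}$-measurable) data and initial value $\hat Y^\ep_{k\delta}$, bound its conditional square using Proposition~\ref{Rem 4.1} together with the elementary identity $\EE\big|\int_0^R h(Y_r)dr\big|^2 = 2\int_0^R\!\int_0^{r}\EE[h(Y_{r'})\EE(h(Y_r)\,|\,\mathscr{F}_{r'})]\,dr'dr$ and the semigroup estimate, getting a bound $\lesssim \ep^2(1+\cdots)(1\wedge \ep/\delta)\cdot(\delta/\ep)$ type; then replace the frozen data $(k\delta,X^\ep_{k\delta},\mathscr{L}_{X^\ep_{k\delta}})$-driven process by the true $\hat Y^\ep$ on the block at the cost of $\mathbb{W}_2$-Lipschitz errors already controlled in Lemma~\ref{DEY}. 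Finally sum the blocks, combine with the contribution of term (ii), invoke Gronwall's inequality on $t\mapsto\sup_{r\le t}\EE|\hat X^\ep_r-\bar X_r|^2$, and collect the four error terms. The main obstacle I anticipate is handling the distribution dependence cleanly in the coupling on each block — ensuring that the $\mathbb{W}_2$-distances between the laws $\mathscr{L}_{X^\ep_s}$, $\mathscr{L}_{X^\ep_{s(\delta)}}$, $\mathscr{L}_{\bar X_s}$ that appear inside $b$ and $\bar b$ are all dominated by quantities already estimated (Lemmas~\ref{COX}, \ref{DEX}, \ref{PMA}) and never reintroduce the very quantity $\EE|\hat X^\ep_s-\bar X_s|^2$ except in a Gronwall-admissible way.
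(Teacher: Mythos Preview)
Your overall architecture---split the integrand into the ergodic block error and a Lipschitz remainder, then Gronwall---matches the paper, and your handling of term~(ii) is fine. Two minor corrections: (a) $\hat X^\ep_t-\bar X_t$ is \emph{not} a pure drift integral, since the diffusion parts carry $\sigma(s,X^\ep_s,\mathscr{L}_{X^\ep_s})$ and $\sigma(s,\bar X_s,\mathscr{L}_{\bar X_s})$ respectively; the resulting $I_4$ is handled exactly like your $I_3$. (b) By construction $\hat Y^\ep$ \emph{is} the frozen process on each block, so no coupling with Lemma~\ref{DEY} is needed there.

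The genuine gap is your treatment of the cross terms in the squared block sum. You claim that ``the Markov property and the exponential mixing of Proposition~\ref{Rem 4.1}'' deliver $\ep/\delta^{1/2}+\ep+\ep^2/\delta$, but Proposition~\ref{Rem 4.1} only gives decay of $\tilde b(t,x,\mu,y,s):=\tilde\EE\,b(t,x,\mu,Y^{t,x,\mu,y}_s)-\bar b(t,x,\mu)$ in $s$ for \emph{fixed} parameters $(t,x,\mu)$. For the inner product of blocks $i<j$, after conditioning on $\mathscr{F}_{(i+1)\delta}$ the frozen data $(j\delta,X^\ep_{j\delta},\mathscr{L}_{X^\ep_{j\delta}})$ of the $j$-th block is not $\mathscr{F}_{(i+1)\delta}$-measurable, so you cannot apply the ergodic estimate over the horizon $[(i+1)\delta,t]$ directly. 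The paper (Step~3) resolves this by a telescoping replacement of $(j\delta,X^\ep_{j\delta},\mathscr{L}_{X^\ep_{j\delta}})$ by $((i+1)\delta,X^\ep_{(i+1)\delta},\mathscr{L}_{X^\ep_{(i+1)\delta}})$ through auxiliary processes $Z^\ep_{k,t}$, and each step of the telescoping is controlled by the estimates \eref{tilde bt}--\eref{tilde bmu}, which assert that $\tilde b$ is Lipschitz in $(t,x,\mu)$ \emph{with the same exponential decay $e^{-\eta s}$}. These are proved separately in the Appendix and rely essentially on assumption~\ref{A2}; exponential ergodicity alone is not enough. The term $\ep/\delta^{1/2}$ comes precisely from this telescoping, via $\big(\EE|X^\ep_{(k+1)\delta}-X^\ep_{k\delta}|^2\big)^{1/2}\lesssim\delta^{1/2}$ against the exponential weight summed over $k$; this is also where the cubic prefactor $1+|x|^3+|y|^3$ actually arises (a triple product of linear-growth factors), not from a second/fourth-moment Cauchy--Schwarz as you suggest. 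Without \eref{tilde bt}--\eref{tilde bmu} or an equivalent parameter-regularity device, your cross-term bound cannot be closed.
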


\begin{proof} We will divide the proof into three steps.

\vspace{0.3cm}
\textbf{Step 1.} Recall that
\begin{eqnarray*}
\hat{X}_{t}^{\ep}-\bar{X}_{t}=\!\!\!\!\!\!\!\!&&\int_{0}^{t}\left[b(s(\delta), X_{s(\delta)}^{\ep},\mathscr{L}_{X_{s(\delta)}^{\ep}},\hat{Y}_{s}^{\ep})-\bar{b}(s,\bar{X}_{s},\mathscr{L}_{\bar X_{s}})\right]ds\\
&&+\int_{0}^{t}\left[\sigma(s, X^{\ep}_{s},\mathscr{L}_{X_{s}^{\ep}})-\sigma(s, \bar{X}_{s},\mathscr{L}_{\bar X_{s}})\right]dW^{1}_s\\
=\!\!\!\!\!\!\!\!&&\int_{0}^{t}\left[b(s(\delta), X_{s(\delta)}^{\ep},\mathscr{L}_{ X_{s(\delta)}^{\ep}},\hat{Y}_{s}^{\ep})-\bar{b}(s(\delta), X^{\ep}_{s(\delta)},\mathscr{L}_{X^{\ep}_{s(\delta)}})\right]ds\\
&&+\int_{0}^{t}\left[\bar{b}(s(\delta), X^{\ep}_{s(\delta)},\mathscr{L}_{X^{\ep}_{s(\delta)}})-\bar{b}(s, X^{\ep}_{s},\mathscr{L}_{X^{\ep}_{s}})\right]ds\\
&&+\int_{0}^{t}\left[\bar{b}(s, X_{s}^{\ep},\mathscr{L}_{X^{\ep}_{s}})-\bar{b}(s, \bar{X}_s,\mathscr{L}_{\bar X_{s}})\right]ds\\
&&+\int_{0}^{t}\left[\sigma(s, X^{\ep}_{s}, \mathscr{L}_{X^{\ep}_{s}})-\sigma(s, \bar{X}_{s},\mathscr{L}_{\bar X_{s}})\right]dW^{1}_s.
\end{eqnarray*}
Then it is esay to see that for any $t\in [0, T]$, we have
\begin{eqnarray}
\EE|\hat{X}_{t}^{\ep}-\bar{X}_{t}|^2\leq\!\!\!\!\!\!\!\!&&C\EE\left|\int_{0}^{t}\left[b(s(\delta), X_{s(\delta)}^{\ep},\mathscr{L}_{ X_{s(\delta)}^{\ep}},\hat{Y}_{s}^{\ep})-\bar{b}(s(\delta), X^{\ep}_{s(\delta)},\mathscr{L}_{X^{\ep}_{s(\delta)}})\right]ds\right|^2\nonumber\\
&&+C_T\EE\int_{0}^{t}\left|\bar{b}(s(\delta), X^{\ep}_{s(\delta)},\mathscr{L}_{X^{\ep}_{s(\delta)}})-\bar{b}(s, X^{\ep}_{s},\mathscr{L}_{X^{\ep}_{s}})\right|^2ds\nonumber\\
&&+C_T\EE\int_{0}^{t}\left|\bar{b}(s, X_{s}^{\ep},\mathscr{L}_{X^{\ep}_{s}})-\bar{b}(s, \bar{X}_s,\mathscr{L}_{\bar X_{s}})\right|^2ds\nonumber\\
&&+C\EE\int_{0}^{t}\left\|\sigma(s, X^{\ep}_{s},\mathscr{L}_{X^{\ep}_{s}})-\sigma(s, \bar{X}_{s},\mathscr{L}_{\bar X_{s}})\right\|^2ds\nonumber\\
:=\!\!\!\!\!\!\!\!&&\sum^4_{i=1}I_i(t).\label{I3.14}
\end{eqnarray}

For $I_2(t)$ we have by the Lipschitz property of $\bar b(\cdot,\cdot,\cdot)$ (see \eref{4.16} below) that
\begin{eqnarray}
\sup_{t\in [0,T]}I_2(t)\leq\!\!\!\!\!\!\!\!&&C_T\delta^2+\EE\int^T_0|X^{\ep}_{s(\delta)}-X^{\ep}_{s}|^2ds\nonumber\\
\leq\!\!\!\!\!\!\!\!&&C_T(1+|x|^2+|y|^2)\delta.\label{I3.15}
\end{eqnarray}

For $I_i(t)$, $i=3,4$, Lemma \ref{DEX} implies
\begin{eqnarray}
\sup_{t\in[0,T]}I_3(t)\leq\!\!\!\!\!\!\!\!&& C_T\int_{0}^{T}\mathbb{E}|X^{\ep}_{t}-\bar X_{t}|^2dt\nonumber\\
\leq\!\!\!\!\!\!\!\!&&C_T\int_{0}^{T}\mathbb{E}|X^{\ep}_{t}-\hat X^{\ep}_{t}|^2dt+C_T\int_{0}^{T}\mathbb{E}|\hat X^{\ep}_{t}-\bar X_{t}|^2dt\nonumber\\
\leq\!\!\!\!\!\!\!\!&& C_T(1+|x|^2+|y|^2)\delta+C_T\int_{0}^{T}\mathbb{E}|\hat X^{\ep}_{t}-\bar X_{t}|^2dt.\label{I3.17}
\end{eqnarray}
Similarly, by condition \eref{A11},
\begin{eqnarray}
\sup_{t\in[0,T]}I_4(t)\leq\!\!\!\!\!\!\!\!&&C_T(1+|x|^2+|y|^2)\delta+C_T\int_{0}^{T}\mathbb{E}|\hat X^{\ep}_{t}-\bar X_{t}|^2dt.\label{I3.18}
\end{eqnarray}

Therefore, \eref{I3.14}-\eref{I3.18} yield
\begin{eqnarray}
\sup_{t\in[0, T]}\EE|\hat{X}_{t}^{\ep}-\bar{X}_{t}|^2\leq\!\!\!\!\!\!\!\!&&\sup_{t\in[0,T]}I_1(t)+C_T(1+|x|^2+|y|^2)\delta\nonumber\\
&&+C_T\int^{T}_0\EE|\hat X^{\ep}_t-\bar{X}_{t}|^2dt.\label{I3.19}
\end{eqnarray}
Then combining this with the following estimate of $I_1(t)$,
\begin{eqnarray}
\sup_{t\in[0,T]}I_1(t)\leq\!\!\!\!\!\!\!\!&&C_{T}(1+|x|^3+|y|^3)(\frac{\ep}{\delta^{1/2}}+\ep+\frac{\ep^2}{\delta}+\delta^2),\label{I_1}
\end{eqnarray}
which will be proved in Step 2, we obtain
\begin{eqnarray*}
\sup_{t\in[0,T]}\EE|\hat{X}_{t}^{\ep}-\bar{X}_{t}|^2\leq\!\!\!\!\!\!\!\!&&C_{T}(1+|x|^3+|y|^3)\left(\frac{\ep}{\delta^{1/2}}+\ep+\frac{\ep^2}{\delta}+\delta\right)+\int^{T}_0\EE|\hat X^{\ep}_t-\bar{X}_{t}|^2dt.
\end{eqnarray*}
Hence, the Grownall's inequality yields
\begin{eqnarray*}
\sup_{t\in[0,T]}\EE|\hat{X}_{t}^{\ep}-\bar{X}_{t}|^2\leq\!\!\!\!\!\!\!\!&&C_{T}(1+|x|^3+|y|^3)\left(\frac{\ep}{\delta^{1/2}}+\ep+\frac{\ep^2}{\delta}+\delta\right),
\end{eqnarray*}
which completes the proof.

\vspace{0.3cm}
\textbf{Step 2.} In this step, we intend to prove estimate \eref{I_1}. Note that
\begin{eqnarray}    \label{J2}
&&\left|\int_{0}^{t}\left[b(s(\delta), X_{s(\delta)}^{\ep},\mathscr{L}_{X^{\ep}_{s(\delta)}},\hat{Y}_{s}^{\ep})-\bar{b}(s(\delta), X^{\ep}_{s(\delta)},\mathscr{L}_{X^{\ep}_{s(\delta)}})\right]ds\right|^2\nonumber\\
\leq\!\!\!\!\!\!\!\!&&2\left|\sum_{k=0}^{[t/\delta]-1}
\int_{k\delta}^{(k+1)\delta}\left[b(k\delta, X_{k\delta}^{\ep},\mathscr{L}_{X^{\ep}_{k\delta}},\hat{Y}_{s}^{\ep})-\bar{b}(k\delta, X_{k\delta}^{\ep},\mathscr{L}_{X^{\ep}_{k\delta}})\right]ds\right|^2\nonumber\\
&&+2\left|\int_{t(\delta)}^{t}\left[b(t(\delta), X_{t(\delta)}^{\ep},\mathscr{L}_{X^{\ep}_{t(\delta)}}\hat{Y}_{s}^{\ep})-\bar{b}(t(\delta), X_{t(\delta)}^{\ep},\mathscr{L}_{X^{\ep}_{t(\delta)}})\right]ds\right|^2\nonumber\\
=\!\!\!\!\!\!\!\!&&2\sum_{k=0}^{[t/\delta]-1}
\left|\int_{k\delta}^{(k+1)\delta}\left[b(k\delta, X_{k\delta}^{\ep},\mathscr{L}_{X^{\ep}_{k\delta}},\hat{Y}_{s}^{\ep})-\bar{b}(k\delta, X_{k\delta}^{\ep},\mathscr{L}_{X^{\ep}_{k\delta}})\right]ds\right|^2\nonumber\\
&&+4\!\!\!\!\!\sum_{0\leq i<j\leq [t/\delta]-1}\left\langle \int_{i\delta}^{(i+1)\delta}\left[b(i\delta, X_{i\delta}^{\ep},\mathscr{L}_{X^{\ep}_{i\delta}},\hat{Y}_{s}^{\ep})-\bar{b}(i\delta, X_{i\delta}^{\ep},\mathscr{L}_{X^{\ep}_{i\delta}})\right]ds, \right.\nonumber\\
&&\quad\quad\quad\quad\quad\quad\left.\int_{j\delta}^{(j+1)\delta}\left[b(j\delta, X_{j\delta}^{\ep},\mathscr{L}_{X^{\ep}_{j\delta}},\hat{Y}_{s}^{\ep})-\bar{b}(j\delta, X_{j\delta}^{\ep},\mathscr{L}_{X^{\ep}_{j\delta}})\right]ds\right\rangle\nonumber\\
&&+2\left|\int_{t(\delta)}^{t}\left[b(t(\delta),X_{t(\delta)}^{\ep},\mathscr{L}_{X^{\ep}_{t(\delta)}},\hat{Y}_{s}^{\ep})-\bar{b}(t(\delta),X_{t(\delta)}^{\ep},\mathscr{L}_{X^{\ep}_{t(\delta)}})\right]ds\right|^2\nonumber\\
:=\!\!\!\!\!\!\!\!&&\sum^3_{i=1}I_{1i}(t).\label{I1F}
\end{eqnarray}

For $I_{13}(t)$, by estimate \eref{4.17} below, Lemmas \ref{PMY} and \ref{MDY}, it is easy to prove that
\begin{eqnarray}
\sup_{t\in [0,T]}\EE I_{13}(t)\leq\!\!\!\!\!\!\!\!&&C_T\delta\EE\int_{t(\delta)}^{t}\left[1+|X^{\ep}_{t(\delta)}|^{2}+\EE|X^{\ep}_{t(\delta)}|^{2}+|\hat{Y}_{s}^{\ep}|^{2}\right]ds\nonumber\\
\leq\!\!\!\!\!\!\!\!&& C_{T}(1+|x|^2+|y|^2)\delta^2.\label{I13}
\end{eqnarray}

For the term $I_{11}(t)$, we have
\begin{eqnarray*}
\mathbb{E}I_{11}(t)=\!\!\!\!\!\!\!\!&&2\sum_{k=0}^{[t/\delta]-1}\mathbb{E}\left|\int_{k\delta}^{(k+1)\delta}\left[b(k\delta, X_{k\delta}^{\ep},\mathscr{L}_{X^{\ep}_{k\delta}},\hat{Y}_{s}^{\ep})-\bar{b}(k\delta,X_{k\delta}^{\ep},\mathscr{L}_{X^{\ep}_{k\delta}})\right]ds\right|^{2}\nonumber\\
=\!\!\!\!\!\!\!\!&&2\sum_{k=0}^{[t/\delta]-1}\mathbb{E}\left|\int_{0}^{\frac{\delta}{\ep}}
\left[b(k\delta, X_{k\delta}^{\ep},\mathscr{L}_{X^{\ep}_{k\delta}},\hat{Y}_{s\ep+k\delta}^{\ep})-\bar{b}(k\delta, X_{k\delta}^{\ep},\mathscr{L}_{X^{\ep}_{k\delta}})\right]ds\right|^{2}  \nonumber\\
=\!\!\!\!\!\!\!\!&&2\ep^2\sum_{k=0}^{[t/\delta]-1}\int_{0}^{\frac{\delta}{\ep}}
\int_{r}^{\frac{\delta}{\ep}}\Psi_{k}(s,r)dsdr,  \nonumber
\end{eqnarray*}
where for any $0\leq r\leq s\leq \frac{\delta}{\ep}$,
\begin{eqnarray*}
\Psi_{k}(s,r):=\!\!\!\!\!\!\!\!&&\mathbb{E}\left[
\big\langle b(k\delta, X_{k\delta}^{\ep},\mathscr{L}_{X^{\ep}_{k\delta}},\hat{Y}_{s\ep+k\delta}^{\ep})-\bar{b}(k\delta, X_{k\delta}^{\ep},\mathscr{L}_{X^{\ep}_{k\delta}}),\right.\\
&&\quad\quad\left.b(k\delta, X_{k\delta}^{\ep},\mathscr{L}_{X^{\ep}_{k\delta}},\hat{Y}_{r\ep+k\delta}^{\ep})-\bar{b}(k\delta, X_{k\delta}^{\ep},\mathscr{L}_{X^{\ep}_{k\delta}})\big\rangle\right].
\end{eqnarray*}
For any $s>0$, $\mu\in\mathscr{P}_2$ and random variables $x,y\in\mathscr{F}_s$, we consider the following equation
\begin{eqnarray*}
\tilde{Y}^{\ep,s,x,\mu,y}_t=y+\frac{1}{\ep}\int^t_s f(s,x,\mu,\tilde{Y}^{\ep,s,x,\mu,y}_r)dr+\frac{1}{\sqrt{\ep}}\int^t_s g(s,x,\mu,\tilde{Y}^{\ep,s,x,\mu,y}_r)dW^2_r,\quad t\geq s.
\end{eqnarray*}
Then by the construction of $\hat{Y}_{t}^{\ep}$,
for any $k\in \mathbb{N}_{\ast}$, we have
$$
\hat{Y}_{t}^{\ep}=\tilde Y^{\ep,k\delta,X_{k\delta}^{\ep},\mathscr{L}_{X^{\ep}_{k\delta}},\hat{Y}_{k\delta}^{\ep}}_t,\quad t\in[k\delta,(k+1)\delta],
$$
which implies
\begin{eqnarray*}
\Psi_{k}(s,r)=\!\!\!\!\!\!\!\!&&\mathbb{E}\left[
\langle b(k\delta, X_{k\delta}^{\ep},\mathscr{L}_{X^{\ep}_{k\delta}},\tilde {Y}^{\ep, k\delta, X_{k\delta}^{\ep}, \mathscr{L}_{X^{\ep}_{k\delta}},\hat Y_{k\delta}^{\ep}}_{s\ep+k\delta})-\bar{b}(k\delta, X_{k\delta}^{\ep},\mathscr{L}_{X^{\ep}_{k\delta}}),\right.\\
&&\quad\quad\quad \left.b(k\delta, X_{k\delta}^{\ep},\mathscr{L}_{X^{\ep}_{k\delta}},\tilde{Y}^{\ep, k\delta,X_{k\delta}^{\ep}, \mathscr{L}_{X^{\ep}_{k\delta}},\hat Y_{k\delta}^{\ep}}_{r\ep+k\delta})-\bar{b}(k\delta, X_{k\delta}^{\ep},\mathscr{L}_{X^{\ep}_{k\delta}})\rangle \right].
\end{eqnarray*}
Note that since for any fixed $x\in\RR^n$, $y\in\RR^m$, $\tilde Y^{\ep, k\delta, x, \mu, y}_{s\ep+k\delta}$ is independent of $\mathscr{F}_{k\delta}$, and $X_{k\delta}^{\ep}$ , $\hat Y_{k\delta}^{\ep}$ are $\mathscr{F}_{k\delta}$-measurable, we have
\begin{eqnarray*}
\Psi_{k}(s,r)=\!\!\!\!\!\!\!\!&&\EE\Bigg\{\mathbb{E}\left[
\langle b(k\delta, X_{k\delta}^{\ep},\mathscr{L}_{X^{\ep}_{k\delta}},\tilde {Y}^{\ep, k\delta, X_{k\delta}^{\ep},\mathscr{L}_{X^{\ep}_{k\delta}}, \hat Y_{k\delta}^{\ep}}_{s\ep+k\delta})-\bar{b}(k\delta, X_{k\delta}^{\ep},\mathscr{L}_{X^{\ep}_{k\delta}}),\right.\\
&&\quad\quad\quad \left.b(k\delta, X_{k\delta}^{\ep},\mathscr{L}_{X^{\ep}_{k\delta}},\tilde{Y}^{\ep, k\delta, X_{k\delta}^{\ep}, \mathscr{L}_{X^{\ep}_{k\delta}},\hat Y_{k\delta}^{\ep}}_{r\ep+k\delta})-\bar{b}(k\delta, X_{k\delta}^{\ep},\mathscr{L}_{X^{\ep}_{k\delta}})\rangle \Big|\mathscr{F}_{k\delta}\right](\omega)\Bigg\}\\
=\!\!\!\!\!\!\!\!&&\EE\Bigg\{\EE\left[\langle b(k\delta, X^{\ep}_{k\delta}(\omega),\mathscr{L}_{X^{\ep}_{k\delta}},\tilde {Y}^{\ep, k\delta, X^{\ep}_{k\delta}(\omega), \mathscr{L}_{X^{\ep}_{k\delta}},\hat Y_{k\delta}^{\ep}(\omega)}_{s\ep+k\delta})-\bar{b}(k\delta, X^{\ep}_{k\delta}(\omega),\mathscr{L}_{X^{\ep}_{k\delta}}),\right.\\
&&\quad\quad\quad\left.b(k\delta, X^{\ep}_{k\delta}(\omega),\mathscr{L}_{X^{\ep}_{k\delta}},\tilde {Y}^{\ep, k\delta, X^{\ep}_{k\delta}(\omega),\mathscr{L}_{X^{\ep}_{k\delta}}, \hat Y_{k\delta}^{\ep}(\omega)}_{r\ep+k\delta})-\bar{b}(k\delta, X^{\ep}_{k\delta}(\omega),\mathscr{L}_{X^{\ep}_{k\delta}})\rangle\right] \Bigg\}.
\end{eqnarray*}
By the definition of the process $\{\tilde{Y}^{\ep,s,x,\mu,y}_t\}_{t\geq 0}$, it is easy to see that
\begin{eqnarray}
\tilde{Y}^{\ep,k\delta,x,\mu,y}_{s\ep+k\delta}=\!\!\!\!\!\!\!\!&&y+\frac{1}{\ep}\int^{s\ep+k\delta}_{k\delta} f(k\delta,x,\mu,\tilde{Y}^{\ep,k\delta,x,\mu,y}_r)dr+\frac{1}{\sqrt{\ep}}\int^{s\ep+k\delta}_{k\delta} g(k\delta,x,\mu,\tilde{Y}^{\ep,k\delta,x,\mu,y}_r)dW^2_r\nonumber\\
=\!\!\!\!\!\!\!\!&&y+\frac{1}{\ep}\int^{s\ep}_{0} f(k\delta,x,\mu,\tilde{Y}^{\ep,k\delta,x,\mu,y}_{r+k\delta})dr+\frac{1}{\sqrt{\ep}}\int^{s\ep}_{0} g(k\delta,x,\mu,\tilde{Y}^{\ep,k\delta,x,\mu,y}_{r+k\delta})dW^{2,k\delta}_r\nonumber\\
=\!\!\!\!\!\!\!\!&&y+\int^{s}_{0} f(k\delta,x,\mu,\tilde{Y}^{\ep,k\delta,x,\mu,y}_{r\ep+k\delta})dr+\int^{s}_{0} g(k\delta,x,\mu,\tilde{Y}^{\ep,k\delta,x,\mu,y}_{r\ep+k\delta})d\hat{W}^{2,k\delta}_r,\label{E3.15}
\end{eqnarray}
where $\{W^{2, k\delta}_r:=W^2_{r+k\delta}-W^2_{k\delta}\}_{r\geq 0}$ and $\{\hat W^{2,k\delta}_t:=\frac{1}{\sqrt{\ep}}W^{2,k\delta}_{t\ep}\}_{t\geq 0}$. Recall the solution of the frozen equation satisfies
\begin{eqnarray}
Y_{s}^{k\delta, x, \mu,y}=\!\!\!\!\!\!\!\!&& y
+\int_{0}^{{s}}f(k\delta, x, \mu,Y_{r}^{k\delta, x,\mu,y})dr
+\int_{0}^{{s}}g(k\delta, x, \mu,Y_{r}^{k\delta, x,\mu,y})d\tilde{W}^2_r.  \label{E3.16}
\end{eqnarray}
The uniqueness of the solutions of Eq. (\ref{E3.15}) and Eq. (\ref{E3.16}) implies
that the distribution of $\{\tilde Y^{\ep, k\delta, x,\mu,y}_{s\ep+k\delta}\}_{0\leq s\leq \delta/\ep}$
coincides with the distribution of $\{Y_{s}^{k\delta,x, \mu,y}\}_{0\leq s\leq \delta/\ep}$.
Then by Proposition \ref{Rem 4.1}, we have
\begin{eqnarray*}
\Psi_{k}(s,r)=\!\!\!\!\!\!\!\!&&\EE\Big[\tilde \EE\big\langle b(k\delta, X^{\ep}_{k\delta}(\omega),\mathscr{L}_{X^{\ep}_{k\delta}},Y^{k\delta, X^{\ep}_{k\delta}(\omega),\mathscr{L}_{X^{\ep}_{k\delta}},\hat Y_{k\delta}^{\ep}(\omega)}_{s})-\bar{b}(k\delta,  X^{\ep}_{k\delta}(\omega),\mathscr{L}_{X^{\ep}_{k\delta}}),\\
&&\quad\quad b(k\delta,  X^{\ep}_{k\delta}(\omega),\mathscr{L}_{X^{\ep}_{k\delta}},Y^{k\delta,  X^{\ep}_{k\delta}(\omega),\mathscr{L}_{X^{\ep}_{k\delta}},\hat Y_{k\delta}^{\ep}(\omega)}_{r})-\bar{b}(k\delta,  X^{\ep}_{k\delta}(\omega),\mathscr{L}_{X^{\ep}_{k\delta}})\big\rangle\Big]\\
=\!\!\!\!\!\!\!\!&&\EE\Big[\tilde \EE\big\langle \tilde \EE\big[ b(k\delta, X^{\ep}_{k\delta}(\omega),\mathscr{L}_{X^{\ep}_{k\delta}},Y^{k\delta, X^{\ep}_{k\delta}(\omega),\mathscr{L}_{X^{\ep}_{k\delta}},\hat Y_{k\delta}^{\ep}(\omega)}_{s})|\tilde{\mathscr{F}}_r\big](\tilde \omega)-\bar{b}(k\delta,  X^{\ep}_{k\delta}(\omega),\mathscr{L}_{X^{\ep}_{k\delta}}),\\
&&\quad\quad b(k\delta,  X^{\ep}_{k\delta}(\omega),\mathscr{L}_{X^{\ep}_{k\delta}},Y^{k\delta,  X^{\ep}_{k\delta}(\omega),\mathscr{L}_{X^{\ep}_{k\delta}},\hat Y_{k\delta}^{\ep}(\omega)}_{r}(\tilde \omega))-\bar{b}(k\delta,  X^{\ep}_{k\delta}(\omega),\mathscr{L}_{X^{\ep}_{k\delta}})\big\rangle\Big]\\
\leq\!\!\!\!\!\!\!\!&&C_T\EE\left\{\tilde \EE\left[1+|X^{\ep}_{k\delta}(\omega)|^{2}+|Y_{r}^{k\delta, X^{\ep}_{k\delta}(\omega), \mathscr{L}_{X^{\ep}_{k\delta}},\hat Y_{k\delta}^{\ep}(\omega)}(\tilde \omega)|^{2}+\mathscr{L}_{X^{\ep}_{k\delta}}(|\cdot|^2)\right] e^{-\frac{(s-r)\beta}{2}}\right\}\\
\leq\!\!\!\!\!\!\!\!&&C_T\EE\left(1+|X^{\ep}_{k\delta}|^{2}+|\hat Y_{k\delta}^{\ep}|^{2}+\EE|X^{\ep}_{k\delta}|^{2}\right)e^{-\frac{(s-r)\beta}{2}}\\
\leq\!\!\!\!\!\!\!\!&&C_{T}(1+|x|^2+|y|^2)e^{-\frac{(s-r)\beta}{2}},
\end{eqnarray*}
where the last inequality is consequence of Lemmas \ref{PMY} and \ref{MDY}. Hence we have
\begin{eqnarray}
\sup_{t\in[0,T]}\mathbb{E}I_{11}(t)\leq\!\!\!\!\!\!\!\!&&C_{T}(1+|x|^2+|y|^2)\frac{\ep^2}{\delta}
\int_{0}^{\frac{\delta}{\ep}}\int_{r}^{\frac{\delta}{\ep}}e^{-\frac{(s-r)\beta}{2}}dsdr\nonumber\\
=\!\!\!\!\!\!\!\!&&C_{T}(1+|x|^2+|y|^2)\frac{\ep^2}{\delta}\Big(\frac{\delta}{\beta\ep}-\frac{1}{\beta^{2}}
+\frac{1}{\beta^{2}}e^{-\frac{\beta\delta}{\ep}}\Big)  \nonumber\\
\leq\!\!\!\!\!\!\!\!&&C_{T}(1+|x|^2+|y|^2)(\ep+\frac{\ep^2}{\delta}).\label{I11}
\end{eqnarray}

For the term $I_{12}(t)$, in Step 3 we will prove the following estimate:
\begin{eqnarray}
\sup_{t\in[0,T]}\mathbb{E}I_{12}(t)\leq\!\!\!\!\!\!\!\!&&C_{T}(1+|x|^3+|y|^3)(\frac{\ep}{\delta^{1/2}}+\ep).\label{I12}
\end{eqnarray}

As a consequence, estimates \eref{I1F}, \eref{I13}, \eref{I11} and \eref{I12} imply \eref{I_1}.

\vspace{0.3cm}
\textbf{Step 3.} In this step, we intend to prove estimate \eref{I12}. For convenience, for any $i\in \mathbb{N}$, setting $Z^{\ep}_{i,t}:=\tilde{Y}^{\ep,i\delta,X^{\ep}_{i\delta},\mathscr{L}_{X^{\ep}_{i\delta}},\hat{Y}^{\ep}_{i\delta}}_t$ with $i\delta\leq t$, we obtain that
\begin{equation}\left\{\begin{array}{l}\label{AP}
\displaystyle
dZ^{\ep}_{i,t}=\frac{1}{\ep}f(i\delta, X^{\ep}_{i\delta}, \mathscr{L}_{X^{\ep}_{i\delta}}Z^{\ep}_{i,t})dt+\frac{1}{\sqrt{\ep}}g(i\delta,X^{\ep}_{i\delta},\mathscr{L}_{X^{\ep}_{i\delta}}, Z^{\ep}_{i,t})dW_{t}^{2},\\
Z^{\ep}_{i,i\delta}=\hat{Y}^{\ep}_{i\delta}.
\end{array}\right.
\end{equation}
By the definition above, it is easy to see that
\begin{eqnarray*}
Z^{\ep}_{k,t}=\hat{Y}^{\ep}_t,\quad t\in [k\delta, (k+1)\delta]
\end{eqnarray*}
and continuity implies that
\begin{eqnarray*}
Z^{\ep}_{k,(k+1)\delta}=Z^{\ep}_{k+1,(k+1)\delta}=\hat{Y}^{\ep}_{(k+1)\delta}.
\end{eqnarray*}
Let $\EE_{s}$ be the conditional expectation $w.r.t.$ $\mathscr{F}_{s}$,$s\geq 0$.  Then for any $0\leq i<j\leq [t/\delta]-1$,
\begin{eqnarray}
&&\EE\left\langle \int_{i\delta}^{(i+1)\delta}\left[b(i\delta, X_{i\delta}^{\ep},\mathscr{L}_{X^{\ep}_{i\delta}}, \hat{Y}_{s}^{\ep})-\bar{b}(i\delta, X_{i\delta}^{\ep},\mathscr{L}_{X^{\ep}_{i\delta}})\right]ds, \right.\nonumber\\
&&\quad\quad\left.\int_{j\delta}^{(j+1)\delta}\left[b(j\delta, X_{j\delta}^{\ep},\mathscr{L}_{X^{\ep}_{j\delta}}, \hat{Y}_{s}^{\ep})-\bar{b}(j\delta, X_{j\delta}^{\ep},\mathscr{L}_{X^{\ep}_{j\delta}})\right]ds\right\rangle\nonumber\\
=\!\!\!\!\!\!\!\!&&\int_{i\delta}^{(i+1)\delta}\int_{j\delta}^{(j+1)\delta}\EE\left\langle b(i\delta, X_{i\delta}^{\ep},\mathscr{L}_{X^{\ep}_{i\delta}},\hat{Y}_{s}^{\ep})-\bar{b}(i\delta, X_{i\delta}^{\ep},\mathscr{L}_{X^{\ep}_{i\delta}}), \right.\nonumber\\
&&\quad\quad\quad\quad\quad\quad\quad\quad\left. b(j\delta, X_{j\delta}^{\ep},\mathscr{L}_{X^{\ep}_{j\delta}},\hat{Y}_{t}^{\ep})-\bar{b}(j\delta, X_{j\delta}^{\ep},\mathscr{L}_{X^{\ep}_{j\delta}})\right\rangle dsdt\nonumber\\
\leq\!\!\!\!\!\!\!\!&&\int_{i\delta}^{(i+1)\delta}\int_{j\delta}^{(j+1)\delta}\EE\left\{\left |b(i\delta, X_{i\delta}^{\ep},\mathscr{L}_{X^{\ep}_{i\delta}},\hat{Y}_{s}^{\ep})-\bar{b}(i\delta, X_{i\delta}^{\ep},\mathscr{L}_{X^{\ep}_{i\delta}})\right|\right.\nonumber\\
&&\quad\quad\quad\quad\quad\quad\quad\quad\left.\cdot\left|\EE_{(i+1)\delta}\left[b(j\delta, X_{j\delta}^{\ep},\mathscr{L}_{X^{\ep}_{j\delta}}\hat{Y}_{t}^{\ep})-\bar{b}(j\delta, X_{j\delta}^{\ep},\mathscr{L}_{X^{\ep}_{j\delta}})\right]\right| \right\}dsdt\nonumber\\
\leq\!\!\!\!\!\!\!\!&&C_T\int_{i\delta}^{(i+1)\delta}\!\!\int_{j\delta}^{(j+1)\delta}\!\!\EE\left\{(1+|X_{i\delta}^{\ep}|+|\hat{Y}_{s}^{\ep}|)\right.\big|\EE_{(i+1)\delta}\left[\left(b(j\delta, X_{j\delta}^{\ep},\mathscr{L}_{X^{\ep}_{j\delta}},\hat{Y}_{t}^{\ep})-\bar{b}(j\delta, X_{j\delta}^{\ep},\mathscr{L}_{X^{\ep}_{j\delta}})\right)\right.\nonumber\\
&&\quad\quad\quad\quad\left.\left.-\left(b((i+1)\delta, X_{(i+1)\delta}^{\ep}, \mathscr{L}_{X^{\ep}_{(i+1)\delta}},Z_{i+1, t}^{\ep})-\bar{b}((i+1)\delta, X_{(i+1)\delta}^{\ep},\mathscr{L}_{X^{\ep}_{(i+1)\delta}})\right)\right]\big|\right\}dsdt\nonumber\\
&&+C_T\int_{i\delta}^{(i+1)\delta}\int_{j\delta}^{(j+1)\delta}\EE\left\{(1+|X_{i\delta}^{\ep}|+|\hat{Y}_{s}^{\ep}|)\right.\nonumber\\
&&\left.\cdot\left|\EE_{(i+1)\delta}\left[b((i+1)\delta, X_{(i+1)\delta}^{\ep}, \mathscr{L}_{X^{\ep}_{(i+1)\delta}}, Z_{i+1, t}^{\ep})-\bar{b}((i+1)\delta, X_{(i+1)\delta}^{\ep},\mathscr{L}_{X^{\ep}_{(i+1)\delta}})\right]\right| \right\}dsdt\nonumber\\
:=\!\!\!\!\!\!\!\!&&B_1+B_2.\label{B1+B2}
\end{eqnarray}
On one hand, by a similar argument for $I_{11}(t)$, we obtain
\begin{eqnarray}
B_2\leq\!\!\!\!\!\!\!\!&&C_T\int_{i\delta}^{(i+1)\delta}\int_{j\delta}^{(j+1)\delta}\EE\left[(1+|X_{i\delta}^{\ep}|+|\hat{Y}_{s}^{\ep}|)(1+|X_{(i+1)\delta}^{\ep}|+|\hat{Y}_{(i+1)\delta}^{\ep}|)\right]e^{\frac{-\beta[t-(i+1)\delta]}{2\ep}}dsdt \nonumber\\
\leq\!\!\!\!\!\!\!\!&&C_{T}(1+|x|^2+|y|^2)\int_{i\delta}^{(i+1)\delta}\int_{j\delta}^{(j+1)\delta}e^{\frac{-\beta[t-(i+1)\delta]}{2\ep}}dsdt\nonumber\\
\leq\!\!\!\!\!\!\!\!&&C_{T}(1+|x|^2+|y|^2)\ep\delta e^{\frac{-\beta(j-i)\delta}{2\ep}}(1-e^{\frac{-\beta\delta}{2\ep}}).\label{B2}
\end{eqnarray}
On the other hand,
\begin{eqnarray*}
B_1=\!\!\!\!\!\!\!\!&&C_T\int_{i\delta}^{(i+1)\delta}\!\!\int_{j\delta}^{(j+1)\delta}\!\!\sum^{j-1}_{k=i+1}\!\!\EE\left\{(1+|X_{i\delta}^{\ep}|+|\hat{Y}_{s}^{\ep}|)\right.\big|\EE_{(i+1)\delta}\left[\left(b((k+1)\delta, X_{(k+1)\delta}^{\ep},\mathscr{L}_{X_{(k+1)\delta}^{\ep}}, Z_{k+1, t}^{\ep})\right.\right.\nonumber\\
&&\left.-\bar{b}((k+1)\delta, X_{(k+1)\delta}^{\ep},\mathscr{L}_{X_{(k+1)\delta}^{\ep}})\right)\left.\left.-\left(b(k\delta, X_{k\delta}^{\ep},\mathscr{L}_{X_{k\delta}^{\ep}},Z_{k, t}^{\ep})-\bar{b}(k\delta, X_{k\delta}^{\ep},\mathscr{L}_{X_{k\delta}^{\ep}})\right)\right]\big|\right\}dsdt\nonumber\\
=\!\!\!\!\!\!\!\!&&C_T\int_{i\delta}^{(i+1)\delta}\!\!\int_{j\delta}^{(j+1)\delta}\!\!\sum^{j-1}_{k=i+1}\!\!\EE\left\{(1+|X_{i\delta}^{\ep}|+|\hat{Y}_{s}^{\ep}|)\right.\big|\EE_{k\delta}\left[\left(b((k+1)\delta, X_{(k+1)\delta}^{\ep},\mathscr{L}_{X_{(k+1)\delta}^{\ep}}, Z_{k+1, t}^{\ep})\right.\right.\nonumber\\
&&\left.-\bar{b}((k+1)\delta, X_{(k+1)\delta}^{\ep},\mathscr{L}_{X_{(k+1)\delta}^{\ep}})\right)\left.\left.-\left(b(k\delta, X_{k\delta}^{\ep},\mathscr{L}_{X_{k\delta}^{\ep}},Z_{k, t}^{\ep})-\bar{b}(k\delta, X_{k\delta}^{\ep},\mathscr{L}_{X_{k\delta}^{\ep}})\right)\right]\big|\right\}dsdt.
\end{eqnarray*}
Thanks to the Markov property, we get
\begin{eqnarray*}
&&\EE_{k\delta}\left[b((k+1)\delta, X_{(k+1)\delta}^{\ep},\mathscr{L}_{X_{(k+1)\delta}^{\ep}},Z_{k+1, t}^{\ep})-\bar{b}((k+1)\delta, X_{(k+1)\delta}^{\ep},\mathscr{L}_{X_{(k+1)\delta}^{\ep}})\right]\\
=\!\!\!\!\!\!\!\!&&\EE_{k\delta}\left[\tilde b((k+1)\delta, X_{(k+1)\delta}^{\ep},\mathscr{L}_{X_{(k+1)\delta}^{\ep}},\hat Y_{(k+1)\delta}^{\ep}, [t-(k+1)\delta]/\ep)\right]
\end{eqnarray*}
and
\begin{eqnarray*}
&&\EE_{k\delta}\left[b(k\delta, X_{k\delta}^{\ep},\mathscr{L}_{X_{k\delta}^{\ep}},Z_{k, t}^{\ep})-\bar{b}(k\delta, X_{k\delta}^{\ep},\mathscr{L}_{X_{k\delta}^{\ep}})\right]\\
=\!\!\!\!\!\!\!\!&&\EE_{k\delta}\left[\tilde b(k\delta, X_{k\delta}^{\ep},\mathscr{L}_{X_{k\delta}^{\ep}},\hat Y_{(k+1)\delta}^{\ep},[t-(k+1)\delta]/\ep)\right],
\end{eqnarray*}
where  $\tilde b(t,x,\mu,y,s)=\tilde \EE b(t,x, \mu,Y^{t,x,\mu,y}_s)-\bar b(t,x,\mu)$.

Recall the following  properties of $\tilde b$ (see the detailed proof in Section 5.3):
\begin{itemize}
\item{For any $t_1,t_2\in[0,T], s\geq 0$, $x\in\RR^n$, $y\in\RR^m$ and $\mu\in\mathscr{P}_2$,
\begin{eqnarray}
&&|\tilde b(t_1, x, \mu, y, s)-\tilde b(t_2, x,\mu, y, s)|\nonumber\\
\leq\!\!\!\!\!\!\!\!&& C_T|t_1-t_2|e^{-\eta s}\left\{1+|x|^{\gamma_1}+|y|^{\gamma_1}+[\mu(|\cdot|^2)]^{\gamma_1/2}\right\}; \label{tilde bt}
\end{eqnarray}}
\item{For any $t\in[0, T], s\geq 0$, $x_1,x_2\in\RR^n$, $y\in\RR^m$ and $\mu\in\mathscr{P}_2$,
\begin{eqnarray}
&&|\tilde b(t, x_1, \mu, y, s)-\tilde b(t, x_2, \mu, y, s)|\nonumber\\
\leq\!\!\!\!\!\!\!\!&& C_T|x_1-x_2|e^{-\eta s}\left\{1+|x_1|^{\gamma_1}+|x_2|^{\gamma_1}+|y|^{\gamma_1}+[\mu(|\cdot|^2)]^{\gamma_1/2}\right\}; \label{tilde bx}
\end{eqnarray}
}
\item{For any $t\in[0,T], s\geq 0$, $x\in\RR^n$, $y\in\RR^m$ and $\mu_1,\mu_2\in\mathscr{P}_2$,
\begin{eqnarray}
&&|\tilde b(t, x, \mu_1, y, s)-\tilde b(t, x, \mu_2,y, s)|\nonumber\\
\leq\!\!\!\!\!\!\!\!&&C_T\mathbb{W}_2(\mu_1,\mu_2)e^{-\eta s}\left\{1+|x|^{\gamma_1}+|y|^{\gamma_1}+[\mu_1(|\cdot|^2)]^{\gamma_1/2}+[\mu_2(|\cdot|^2)]^{\gamma_1/2}\right\}, \label{tilde bmu}
\end{eqnarray}
}
\end{itemize}
where $\eta$ is a positive constant. Then by estimates \eref{tilde bt}-\eref{tilde bmu} and Lemma \ref{PMY}, we have
\begin{eqnarray}
B_1\leq\!\!\!\!\!\!\!\!&&C\int_{i\delta}^{(i+1)\delta}\!\!\int_{j\delta}^{(j+1)\delta}\!\sum^{j-1}_{k=i+1}\EE\left\{(1+|X_{i\delta}^{\ep}|+|\hat{Y}_{s}^{\ep}|)\right.\nonumber\\
&&\quad\quad\left.\left|\tilde b((k+1)\delta, X_{(k+1)\delta}^{\ep},\mathscr{L}_{X_{(k+1)\delta}^{\ep}},\hat Y_{(k+1)\delta}^{\ep}, [t-(k+1)\delta]/\ep)\right.\right.\nonumber\\
&&\quad\left.-\left.\tilde b(k\delta, X_{k\delta}^{\ep},\mathscr{L}_{X_{k\delta}^{\ep}},\hat Y_{(k+1)\delta}^{\ep},[t-(k+1)\delta]/\ep)\right|\right\}dsdt\nonumber\\
\leq\!\!\!\!\!\!\!\!&&C_T\int_{i\delta}^{(i+1)\delta}\!\!\int_{j\delta}^{(j+1)\delta}\!\sum^{j-1}_{k=i+1}\EE\left[(1+|X_{i\delta}^{\ep}|+|\hat{Y}_{s}^{\ep}|)(1+|X^{\ep}_{k\delta}|^{\gamma_1}+|X^{\ep}_{(k+1)\delta}|^{\gamma_1}+|\hat Y^{\ep}_{(k+1)\delta}|^{\gamma_1})\right.\nonumber\\
&&\quad\quad\quad\quad\left.\left(\delta+|X_{(k+1)\delta}^{\ep}-X_{k\delta}^{\ep}|+[\EE|X_{(k+1)\delta}^{\ep}-X_{k\delta}^{\ep}|^2]^{1/2}\right)\right] e^{\frac{-\beta [t-(k+1)\delta]}{4\ep}}dsdt\nonumber\\
\leq\!\!\!\!\!\!\!\!&&C_{T}(1+|x|^3+|y|^3)\delta^{1/2}\int_{i\delta}^{(i+1)\delta}\!\!\int_{j\delta}^{(j+1)\delta}\!\sum^{j-1}_{k=i+1}e^{\frac{-\beta [t-(k+1)\delta]}{4\ep}}dsdt\nonumber\\
\leq\!\!\!\!\!\!\!\!&&C_{T}(1+|x|^3+|y|^3)\delta^{1/2}\int_{i\delta}^{(i+1)\delta}\!\!\int_{j\delta}^{(j+1)\delta}\frac{e^{\frac{-\beta(t-j\delta)}{4\ep}}}{1-e^{\frac{-\beta\delta}{4\ep}}}dsdt\nonumber\\
\leq\!\!\!\!\!\!\!\!&&C_{T}(1+|x|^3+|y|^3)\delta^{3/2}\ep.\label{B1}
\end{eqnarray}
Combining estimates \eref{B1+B2}, \eref{B2} and \eref{B1}, we obtain
\begin{eqnarray*}
\sup_{t\in[0,T]}\mathbb{E}I_{12}(t)\leq\!\!\!\!\!\!\!\!&&C_{T}(1+|x|^3+|y|^3)\sum_{0\leq i<j\leq [T/\delta]-1}\left[\delta^{3/2}\ep+\ep\delta e^{\frac{-\beta(j-i)\delta}{4\ep}}(1-e^{\frac{-\beta\delta}{4\ep}})\right]\nonumber\\
\leq\!\!\!\!\!\!\!\!&&C_{T}(1+|x|^3+|y|^3)(\frac{\ep}{\delta^{1/2}}+\ep),
\end{eqnarray*}
which is the estimate \eref{I12}. The proof is complete.
\end{proof}

\vspace{0.2cm}
Now we are in a position to complete our first result.

\noindent\textbf{Proof of Theorem \ref{main result 1}:} Taking $\delta=\ep^{2/3}$, Lemmas $\ref{DEX}$ and $\ref{ESX}$ imply that for any $T>0$, initial values $x\in\RR^n$ and $y\in\RR^m$, there exists $C_T>0$ such that
\begin{eqnarray*}
\sup_{t\in[0, T]}\mathbb{E}|X_{t}^{\ep}-\bar{X}_{t}|^2\leq\!\!\!\!\!\!\!\!&& C_{T}(1+|x|^3+|y|^3)\ep^{2/3}.
\end{eqnarray*}
which proves the first part of Theorem \ref{main result 1}, i.e., $\eref{R1}$ holds.

Furthermore, if there is no noise in the slow equation (i.e., $\sigma=0$), we can improve the H\"{o}lder continuity in time in Lemma \ref{COX}, i.e., for any $T>0$, $0\leq t\leq t+h\leq T$, there exists a positive constant $C_{T}$ such that
\begin{eqnarray*}
\sup_{\ep\in(0,1)}\mathbb{E}|X_{t+h}^{\ep}-X_{t}^{\ep}|^{2}\leq C_{T}(1+|x|^2+|y|^2)h^2.
\end{eqnarray*}
Then, following almost the same procedure as above,  it is easy to see that
\begin{eqnarray*}
\sup_{t\in[0, T]}\mathbb{E}|X_{t}^{\ep}-\bar{X}_{t}|^2\leq\!\!\!\!\!\!\!\!&& C_{T}(1+|x|^3+|y|^3)\left(\ep+\frac{\ep^2}{\delta}+\delta^2\right).
\end{eqnarray*}
Hence,  taking $\delta=\ep$ yields $\eref{R2}$. The proof is complete.

\section{Proof of Theorem \ref{main result 2}}

In this section, we will use the technique of Poisson equation to prove the strong convergence order, which is quite different from the method used in Section 3. Because we will study the regularity of second-order derivatives of the solution for the corresponding Poisson equation, more conditions (see assumption \ref{A3}) are needed. This section is divided into two subsections. In Subsection 4.1, we study the regularity of the solution for the corresponding Poisson equation. In Subsection 4.2, we prove Theorem \ref{main result 2} by using the technique of Poisson equation. Note that we always assume conditions \ref{A1}-\ref{A3} hold.

\vskip 0.2cm
\subsection{Poisson equation}
Consider the following Poisson equation:
\begin{equation}
-\mathscr{L}_{2}(t,x,\mu)\Phi(t,x,\mu,y)=b(t,x,\mu,y)-\bar{b}(t,x,\mu),\label{PE}
\end{equation}
where
\begin{eqnarray*}
&&\Phi(t,x,\mu,y)=(\Phi_1(t,x,\mu,y),\ldots, \Phi_n(t,x,\mu,y));\\
&&\mathscr{L}_{2}(t,x,\mu)\Phi(t,x,\mu,y):=(\mathscr{L}_{2}(t,x,\mu)\Phi_1(t,x,\mu,y),\ldots, \mathscr{L}_{2}(t,x,\mu)\Phi_n(t,x,\mu,y))
\end{eqnarray*}
and for any $k=1,\ldots,n.$
\begin{eqnarray*}
\mathscr{L}_{2}(t,x,\mu)\Phi_k(t,x,\mu,y):=\!\!\!\!\!\!\!\!&&\langle f(t,x,\mu,y), \partial_y \Phi_k(t,x,\mu,y)\rangle\\
&&+\frac{1}{2}\text{Tr}[g g^{*}(t,x,\mu,y)\partial^2_{yy} \Phi_k(t,x,\mu,y)].
\end{eqnarray*}

The smoothness of the solution of the Poisson equation with respect to parameters have been studied in many references, see \cite{PV1,PV2,RSX} for example. Note that here the solution for the Poisson equation \eref{PE} depends on the parameter $\mu$, so here we have to check the regularity $w.r.t.$ $\mu$. The main result of this subsection is the following:
\begin{proposition}\label{P3.6}
Assume the assumptions \ref{A1}-\ref{A3} hold. Define
\begin{eqnarray}
\Phi(t,x,\mu,y):=\int^{\infty}_{0}\tilde \EE[b(t,x,\mu,Y^{t,x,\mu,y}_s)]-\bar{b}(t,x,\mu)ds.\label{SPE}
\end{eqnarray}
Then $\Phi(t,x,\mu,y)$ is the unique solution of Eq. \eref{PE} and it satisfies that $\Phi(\cdot,\cdot,\mu,\cdot)\in C^{1,2,2}([0,\infty)\times\RR^n\times\RR^m,\RR^n)$, $\Phi(t,x,\cdot,y)\in C^{1,1}(\mathscr{P}_2, \RR^n)$. Moreover, for any $t\in[0, T]$,
\begin{eqnarray}
&&\max\{|\Phi(t,x,\mu,y)|,\|\partial_y \Phi(t,x,\mu,y)\|,|\partial_t \Phi(t,x,\mu,y)|,\|\partial_x \Phi(t,x,\mu,y)\|, \|\partial_{\mu}\Phi(t,x,\mu,y)\|_{L^2(\mu)}\}\nonumber\\
\leq\!\!\!\!\!\!\!\!&& C_T\{1+|x|+|y|+[\mu(|\cdot|^2)]^{1/2}\} \label{E1}
\end{eqnarray}
and
\begin{eqnarray}
&&\max\{\|\partial^2_{xx} \Phi(t,x,\mu,y)\| ,\|\partial_{z}\partial_{\mu}\Phi(t,x,\mu,y)(\cdot)\|_{L^2(\mu)}\}\nonumber\\
\leq\!\!\!\!\!\!\!\!&& C_T\{1+|x|+|y|+[\mu(|\cdot|^2)]^{1/2}\}.\label{E2}
\end{eqnarray}
\end{proposition}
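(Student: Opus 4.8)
The plan is to read off every assertion directly from the integral representation \eref{SPE}, using as the sole analytic input the exponential ergodicity of the frozen equation \eref{FEQ} (Proposition \ref{Rem 4.1}) together with the analogous contraction of its linearized flows, which is built into the one-sided Lipschitz condition \eref{sm}.

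\emph{Well-posedness of \eref{SPE}, the Poisson property and uniqueness.} By Proposition \ref{Rem 4.1} the integrand in \eref{SPE} is bounded by $C_T e^{-\beta s/2}\{1+|x|+|y|+[\mu(|\cdot|^2)]^{1/2}\}$, so the integral converges absolutely and locally uniformly, which already gives the bound on $|\Phi|$ in \eref{E1}. Setting $u(s,t,x,\mu,y):=\tilde\EE b(t,x,\mu,Y^{t,x,\mu,y}_s)$, the map $s\mapsto u(s,\cdot)$ solves $\partial_s u=\mathscr{L}_2(t,x,\mu)u$ with $u(0,\cdot)=b(t,x,\mu,y)$; integrating in $s$ and using $u(s,\cdot)\to\bar b(t,x,\mu)$ yields $\mathscr{L}_2(t,x,\mu)\Phi=\bar b-b$, which is \eref{PE}, once the interchange of $\mathscr{L}_2$ with $\int_0^\infty ds$ is justified by the derivative bounds obtained in the next two steps. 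For uniqueness, any two solutions of \eref{PE} differ by a $\Psi$ with $\mathscr{L}_2(t,x,\mu)\Psi=0$ of at most linear growth; then $\tilde\EE\Psi(Y^{t,x,\mu,y}_s)$ is constant in $s$ and, by ergodicity, converges to $\int_{\RR^m}\Psi(z)\,\nu^{t,x,\mu}(dz)$, so $\Psi$ is constant in $y$, and the normalization $\int_{\RR^m}\Phi(t,x,\mu,z)\,\nu^{t,x,\mu}(dz)=0$ — valid for \eref{SPE} by invariance of $\nu^{t,x,\mu}$ — picks out \eref{SPE}.

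\emph{Regularity in $(t,x,y)$.} I would differentiate \eref{SPE} under the integral sign, as in \cite{PV1,PV2,RSX}. The $y$-derivatives of $u(s,\cdot)$ involve the flows $\partial_y Y^{t,x,\mu,y}_s$ and $\partial^2_{yy} Y^{t,x,\mu,y}_s$, which solve linear SDEs whose homogeneous part is governed by $\partial_y f$, $\partial_y g$; differentiating \eref{sm} shows this part is strictly dissipative, so $\tilde\EE|\partial_y Y^{t,x,\mu,y}_s|^2+\tilde\EE\|\partial^2_{yy}Y^{t,x,\mu,y}_s\|^2\leq Ce^{-\beta s}$, whence $\partial_y u$ and $\partial^2_{yy}u$ decay like $e^{-\beta s/2}$ and their $s$-integrals obey \eref{E1}. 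For $\partial_t$, $\partial_x$ and $\partial^2_{xx}$ the relevant flows $\partial_x Y^{t,x,\mu,y}_s$, $\partial_t Y^{t,x,\mu,y}_s$, $\partial^2_{xx}Y^{t,x,\mu,y}_s$ solve linear SDEs with bounded inhomogeneous forcing (Assumptions \ref{A1}--\ref{A2}) and the same dissipative homogeneous part, so the joint process $(Y^{t,x,\mu,y}_s,\partial_x Y^{t,x,\mu,y}_s,\partial^2_{xx}Y^{t,x,\mu,y}_s,\ldots)$ is exponentially ergodic; hence $\partial_x u(s,\cdot)$, $\partial^2_{xx}u(s,\cdot)$, $\partial_t u(s,\cdot)$ converge exponentially fast to $\partial_x\bar b$, $\partial^2_{xx}\bar b$, $\partial_t\bar b$, and integrating the centred quantities $\partial_x u-\partial_x\bar b$ etc. over $s\in[0,\infty)$ — using the moment bound $\sup_s\tilde\EE|Y^{t,x,\mu,y}_s|^2\leq C_T[1+|x|^2+|y|^2+\mu(|\cdot|^2)]$ for the linear-growth bookkeeping — gives the remaining bounds in \eref{E1}--\eref{E2}. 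Continuity of these derivatives, hence $\Phi(\cdot,\cdot,\mu,\cdot)\in C^{1,2,2}$, then follows by dominated convergence.

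\emph{Regularity in $\mu$: the main obstacle.} Lifting to $L^2(\Omega,\PP;\RR^n)$, fix $\xi$ with $\mathscr{L}_\xi=\mu$ and view $Y^{t,x,\mathscr{L}_\xi,y}_s$ as a function of $\xi$. One shows $\xi\mapsto Y^{t,x,\mathscr{L}_\xi,y}_s$ is Fr\'echet differentiable, with derivative $\partial_\mu Y^{t,x,\mu,y}_s(\cdot)$ solving the linear SDE obtained by formal differentiation: the forcing terms are $\partial_\mu f(t,x,\mu,Y_s)(\cdot)$ and $\partial_\mu g(t,x,\mu,Y_s)(\cdot)$, evaluated along an independent copy of the state, while the homogeneous part is again governed by $\partial_y f$, $\partial_y g$, so \eref{sm} yields $\sup_s\|\partial_\mu Y^{t,x,\mu,y}_s\|^2_{L^2(\mu)}\leq C_T\{1+|x|^2+|y|^2+\mu(|\cdot|^2)\}$; Assumption \ref{A3} (boundedness of $\partial_z\partial_\mu f$, $\partial_z\partial_\mu g$ and H\"older continuity of the second-order derivatives) gives the analogous bound for $\partial_z\partial_\mu Y^{t,x,\mu,y}_s$. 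As in the $x$-variable, the joint processes $(Y_s,\partial_\mu Y_s)$ and $(Y_s,\partial_\mu Y_s,\partial_z\partial_\mu Y_s)$ are exponentially ergodic, so $\partial_\mu\tilde\EE b(t,x,\mu,Y^{t,x,\mu,y}_s)$ and $\partial_z\partial_\mu\tilde\EE b(t,x,\mu,Y^{t,x,\mu,y}_s)$ converge exponentially to $\partial_\mu\bar b(t,x,\mu)$ and $\partial_z\partial_\mu\bar b(t,x,\mu)$, and integrating the centred differences over $s\in[0,\infty)$ yields $\Phi(t,x,\cdot,y)\in C^{1,1}(\mathscr{P}_2,\RR^n)$ with the bound on $\|\partial_\mu\Phi\|_{L^2(\mu)}$ in \eref{E1} and on $\|\partial_z\partial_\mu\Phi\|_{L^2(\mu)}$ in \eref{E2}. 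I expect the bulk of the work to lie in: (i) rigorously establishing Fr\'echet differentiability of the frozen flow in $\mu$ and well-posedness plus $L^2(\mu)$-estimates for the linear SDE solved by $\partial_\mu Y_s$, and justifying the interchange of $\partial_\mu$ with $\int_0^\infty ds$; (ii) the differentiated-in-$\mu$ ergodicity, i.e. the exponential decay of $\partial_\mu\tilde\EE b(t,x,\mu,Y^{t,x,\mu,y}_s)-\partial_\mu\bar b(t,x,\mu)$, which implicitly also requires controlling the dependence of the invariant measure $\nu^{t,x,\mu}$ on $\mu$; and (iii) the constant bookkeeping, keeping every estimate of the form $C_T\{1+|x|+|y|+[\mu(|\cdot|^2)]^{1/2}\}$, for which the uniform moment and derivative-flow bounds above are used repeatedly.
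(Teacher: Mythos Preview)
Your strategy---differentiate \eref{SPE} under the integral sign, control the derivative flows $\partial_x Y$, $\partial^2_{xx}Y$, $\partial_\mu Y$, $\partial_z\partial_\mu Y$ via the dissipativity \eref{sm}, and integrate the resulting exponentially decaying centred quantities---is the paper's. The one substantive difference is how you obtain the exponential decay of $\partial_x u(s,\cdot)-\partial_x\bar b$, $\partial^2_{xx}u(s,\cdot)-\partial^2_{xx}\bar b$, $\partial_\mu u(s,\cdot)-\partial_\mu\bar b$, etc. You invoke ``exponential ergodicity of the joint process $(Y_s,\partial_x Y_s,\ldots)$'' and flag as an obstacle the implicit need to differentiate $\nu^{t,x,\mu}$ in the parameters. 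The paper bypasses this entirely: by the Markov property it writes
\[
\hat b(t,x,\mu,y,s)-\hat b(t,x,\mu,y,s+s_0)=\hat b(t,x,\mu,y,s)-\tilde\EE\,\hat b\bigl(t,x,\mu,Y^{t,x,\mu,y}_{s_0},s\bigr),
\]
differentiates this identity in $x$ (or $\mu$), and thereby reduces the question to a pure contraction-in-initial-data estimate, namely that $\partial_x\hat b(t,x,\mu,y_1,s)-\partial_x\hat b(t,x,\mu,y_2,s)$ (resp.\ its $\mu$- and second-order analogues) is $O(e^{-\eta s}(1+|y_1-y_2|))$; only at the very end does one let $s_0\to\infty$. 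This device completely avoids any direct differentiation of $\bar b$ or of $\nu^{t,x,\mu}$---it resolves precisely the obstacle you flag in point~(ii). Your joint-ergodicity route is not wrong, but to make it rigorous you would end up proving the same $y$-contraction estimates on $\partial_x\hat b$, $\partial_\mu\hat b$, and you would still have to identify the limit as $\partial_x\bar b$ (resp.\ $\partial_\mu\bar b$); the paper's trick delivers both at once and is what actually carries the weight in Steps~2--3 of its proof.
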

\begin{proof}
We will divide the proof into three steps.

\vspace{0.2cm}
\textbf{Step 1.}
Noting that $\mathscr{L}_{2}(t,x,\mu)$ is the infinitesimal generator of the frozen process $\{Y^{t,x,\mu}_s\}$, we easily check  that \eref{SPE} is the unique solution of the Poisson equation \eref{PE} under the assumptions \ref{A1}-\ref{A3}. Moreover, by a straightforward computation, we also have that $\Phi(\cdot,\cdot,\mu,\cdot)\in C^{1,2,2}([0,\infty)\times\RR^n\times\RR^m,\RR^n)$, $\Phi(t,x,\cdot,y)\in C^{1,1}(\mathscr{P}_2, \RR^n)$.

By Proposition \ref{Rem 4.1}, we get
\begin{eqnarray*}
|\Phi(t,x,\mu,y)|\leq\!\!\!\!\!\!\!\!&&\int^{\infty}_{0}|\tilde \EE[b(t,x,\mu,Y^{t,x,\mu,y}_s)]-\bar{b}(t,x,\mu)|ds\\
\leq\!\!\!\!\!\!\!\!&& C_T\{1+|x|+|y|+[\mu(|\cdot|^2)]^{1/2}\}\int^{\infty}_{0}e^{-\frac{\beta s}{2}}ds\\
\leq\!\!\!\!\!\!\!\!&& C_T\{1+|x|+|y|+[\mu(|\cdot|^2)]^{1/2}\}.
\end{eqnarray*}
By Lemma \ref{L3.6}, we have $\tilde \EE\|\partial_{y} Y^{t,x,\mu,y}_s\|^2\leq C_T e^{-\beta s}$, which implies
$$
\|\partial_y \Phi(t,x,\mu,y)\|\leq C_T.
$$
Furthermore, the remaining estimates in \eref{E1} can be obtained easily by \eref{tilde bt}-\eref{tilde bmu}. Therefore, it is sufficient to estimate \eref{E2} below.

We first recall that (see Subsection \ref{sub6.3} in the Appendix)
\begin{eqnarray*}
\tilde b_{s_0}(t, x, \mu, y, s)=\hat b(t, x, \mu, y, s)-\hat b(t, x, \mu, y, s+s_0),
\end{eqnarray*}
where $\hat b(t, x, \mu, y, s)=\tilde \EE b(t,x, \mu, Y^{t,x,\mu, y}_s)$. Note that
$$
\lim_{s_0\rightarrow \infty} \tilde b_{s_0}(t, x, \mu, y, s)=\tilde \EE[b(t,x,\mu,Y^{t,x,\mu,y}_s)]-\bar{b}(t,x,\mu).
$$
So, in order to prove \eref{E2},  it suffices to show there exists $\eta>0$ such that for any $s_0>0$, $t\in[0,T], s\geq 0$, $x\in\RR^n$, $y\in\RR^m$ and $\mu\in\mathscr{P}_2$,
\begin{eqnarray}
\|\partial^2_{xx} \tilde b_{s_0}(t,x,\mu,y,s)\|\leq C_Te^{-\eta s}\{1+|x|+|y|+[\mu(|\cdot|^2)]^{1/2}\}\label{E21}
\end{eqnarray}
and
\begin{eqnarray}
\|\partial_{z}\partial_{\mu}\Phi(t,x,\mu,y)(\cdot)\|_{L^2(\mu)}\leq C_T e^{-\eta s}\{1+|x|+|y|+[\mu(|\cdot|^2)]^{1/2}\},\label{E22}
\end{eqnarray}
which will be proved in the following two steps.
\vspace{0.3cm}

\textbf{Step 2.} In this step, we intend to prove estimate \eref{E21}. We recall that in \eref{MP} below
\begin{eqnarray*}
\tilde b_{s_0}(t, x, \mu, y, s)=\!\!\!\!\!\!\!\!&& \hat b(t, x, \mu, y, s)-\tilde \EE \hat b(t, x, \mu, Y^{t,x,\mu,y}_{s_0},s).
\end{eqnarray*}
Then the chain rule yields
\begin{eqnarray*}
\partial_{x}\tilde b_{s_0}(t, x, \mu, y, s)=\!\!\!\!\!\!\!\!&& \partial_{x} \hat b(t, x, \mu, y, s)-\tilde \EE \partial_{x}\hat b(t, x, \mu, Y^{t,x,\mu,y}_{s_0},s)\nonumber\\
&&-\tilde \EE \left[\partial_y\hat b(t, x, \mu, Y^{t,x,\mu, y}_{s_0},s) \cdot \partial_{x} Y^{t,x,\mu,y}_{s_0}\right],
\end{eqnarray*}
and furthermore,
\begin{eqnarray*}
\partial^2_{xx}\tilde b_{s_0}(t, x, \mu, y, s)=\!\!\!\!\!\!\!\!&& \partial^2_{xx} \hat b(t, x, \mu, y, s)-\tilde \EE \partial^2_{xx}\hat b(t, x, \mu, Y^{t,x,\mu,y}_{s_0},s)\nonumber\\
&&-\tilde \EE \left[\partial^2_{xy}\hat b(t, x, \mu, Y^{t,x,\mu, y}_{s_0},s)\cdot \partial_{x} Y^{t,x,\mu,y}_{s_0}\right]\\
&&-\tilde \EE \left\{\left[\partial^2_{yx}\hat b(t, x, \mu, Y^{t,x,\mu, y}_{s_0},s)+\partial^2_{yy}\hat b(t, x, \mu, Y^{t,x,\mu, y}_{s_0},s) \cdot \partial_{x} Y^{t,x,\mu,y}_{s_0}\right]\cdot \partial_{x} Y^{t,x,\mu,y}_{s_0}\right\}\\
&&-\tilde \EE \left[ \partial_{y}\hat b(t, x, \mu, Y^{t,x,\mu, y}_{s_0},s) \cdot \partial^2_{xx} Y^{t,x,\mu,y}_{s_0} \right]\\
:=\!\!\!\!\!\!\!\!&&\sum^4_{i=1} J_i.
\end{eqnarray*}

(i) For the term $J_1$, note that
\begin{eqnarray*}
\partial_{x}\hat b(t, x, \mu, y, s)=\!\!\!\!\!\!\!\!&&\tilde \EE \left[\partial_x b(t, x, \mu, Y^{t,x,\mu, y}_{s})\right]+\tilde\EE\left[ \partial_{y}b(t,x,\mu,Y^{t,x,\mu,y}_{s})\cdot\partial_{x}Y^{t,x,\mu,y}_s \right],
\end{eqnarray*}
which implies
\begin{eqnarray*}
\partial^2_{xx}\hat b(t, x, \mu, y, s)=\!\!\!\!\!\!\!\!&&\tilde \EE \left[\partial^2_{xx}b(t, x, \mu,Y^{t,x,\mu,y}_s)\right]+\tilde \EE \left[\partial^2_{xy} b(t, x, \mu, Y^{t,x,\mu,y}_s) \cdot \partial_x Y^{t,x,\mu,y}_s\right]\nonumber\\
&&+\tilde \EE \left[\partial^2_{yx} b(t, x, \mu, Y^{t,x,\mu, y}_s)\cdot\partial_{x} Y^{t,x,\mu,y}_{s}\right]\\
&&+\tilde \EE \left[\partial^2_{yy} b(t, x, \mu, Y^{t,x,\mu, y}_s)\cdot(\partial_{x} Y^{t,x,\mu,y}_{s}, \partial_{x} Y^{t,x,\mu,y}_{s})\right]\\
&&+\tilde \EE \left[ \partial_{y} b(t, x, \mu, Y^{t,x,\mu, y}_s)\cdot\partial^2_{xx}Y^{t,x,\mu,y}_{s} \right].
\end{eqnarray*}
Then for any $y_1,y_2\in\RR^m$,
\begin{eqnarray}
&&\|\partial^2_{xx}\hat b(t, x, \mu, y_1, s)-\partial^2_{xx}\hat b(t, x, \mu, y_2, s)\|\nonumber\\
\leq\!\!\!\!\!\!\!\!&&\left\|\tilde \EE \left[\partial^2_{xx}b(t, x, \mu,Y^{t,x,\mu,y_1}_s)-\partial^2_{xx}b(t, x, \mu,Y^{t,x,\mu,y_2}_s)\right]\right\|\nonumber\\
&&+\left\|\tilde \EE \left[\partial^2_{xy} b(t, x, \mu, Y^{t,x,\mu,y_1}_s) \cdot \partial_x Y^{t,x,\mu,y_1}_s-\partial^2_{xy} b(t, x, \mu, Y^{t,x,\mu,y_2}_s) \cdot \partial_x Y^{t,x,\mu,y_2}_s\right]\right\|\nonumber\\
&&+\left\|\tilde \EE \left[\partial^2_{yx} b(t, x, \mu, Y^{t,x,\mu, y_1}_s)\cdot\partial_{x} Y^{t,x,\mu,y_1}_{s}-\partial^2_{yx} b(t, x, \mu, Y^{t,x,\mu, y_2}_s)\cdot\partial_{x} Y^{t,x,\mu,y_2}_{s}\right]\right\|\nonumber\\
&&+\left\|\tilde \EE \left[\partial^2_{yy} b(t, x, \mu, Y^{t,x,\mu, y_1}_s)\cdot(\partial_{x} Y^{t,x,\mu,y_1}_{s}, \partial_{x} Y^{t,x,\mu,y_1}_{s})\right.\right.\nonumber\\
&&\left.\left.\quad\quad\quad-\partial^2_{yy} b(t, x, \mu, Y^{t,x,\mu, y_2}_s)\cdot(\partial_{x} Y^{t,x,\mu,y_2}_{s}, \partial_{x} Y^{t,x,\mu,y_2}_{s})\right]\right\|\nonumber\\
&&+\left\|\tilde \EE \left[ \partial_{y} b(t, x, \mu, Y^{t,x,\mu, y_1}_s)\cdot\partial^2_{xx}Y^{t,x,\mu,y_1}_{s}-\partial_{y} b(t, x, \mu, Y^{t,x,\mu, y_2}_s)\cdot\partial^2_{xx}Y^{t,x,\mu,y_2}_{s} \right]\right\|\nonumber\\
:=\!\!\!\!\!\!\!\!&&\sum^5_{i=1} J_{1i}.\label{J1}
\end{eqnarray}

By condition \eref{A41} and Lemma \ref{L3.6}, there exists $\eta>0$ such that
\begin{eqnarray}
J_{11}\leq C\tilde \EE |Y^{t,x,\mu,y_1}_s-Y^{t,x,\mu,y_2}_s|^{\gamma_2}\leq Ce^{-\eta s}|y_1-y_2|^{\gamma_2}.\label{J11}
\end{eqnarray}

By the boundedness of $\|\partial^2_{xy}b\|$ and condition \eref{A42}, we have
\begin{eqnarray*}
J_{12}\leq\!\!\!\!\!\!\!\!&&\tilde \EE \left\|\partial^2_{xy} b(t, x, \mu, Y^{t,x,\mu,y_1}_s) \cdot \partial_x Y^{t,x,\mu,y_1}_s-\partial^2_{xy} b(t, x, \mu, Y^{t,x,\mu,y_2}_s) \cdot \partial_x Y^{t,x,\mu,y_1}_s\right\|\\
&&+\tilde \EE \left\|\partial^2_{xy} b(t, x, \mu, Y^{t,x,\mu,y_2}_s) \cdot \partial_x Y^{t,x,\mu,y_1}_s-\partial^2_{xy} b(t, x, \mu, Y^{t,x,\mu,y_2}_s) \cdot \partial_x Y^{t,x,\mu,y_2}_s\right\|\\
\leq\!\!\!\!\!\!\!\!&&\tilde \EE \left[\|\partial^2_{xy} b(t, x, \mu, Y^{t,x,\mu,y_1}_s)-\partial^2_{xy} b(t, x, \mu, Y^{t,x,\mu,y_2}_s)\| \|\partial_x Y^{t,x,\mu,y_1}_s\|\right]\\
&&+\tilde \EE \|\partial^2_{xy} b(t, x, \mu, Y^{t,x,\mu,y_2}_s)\| \|\partial_x Y^{t,x,\mu,y_1}_s-\partial_x Y^{t,x,\mu,y_2}_s\|\\
\leq\!\!\!\!\!\!\!\!&&C_T\left[\tilde \EE |Y^{t,x,\mu,y_1}_s-Y^{t,x,\mu,y_2}_s|^{2\gamma_2}\right]^{1/2} \left[\tilde \EE\|\partial_x Y^{t,x,\mu,y_1}_s\|^2\right]^{1/2}\\
&&+C_T\tilde \EE\|\partial_x Y^{t,x,\mu,y_1}_s-\partial_x Y^{t,x,\mu,y_2}_s\|,
\end{eqnarray*}
where $\partial_x Y^{t,x,\mu,y_2}_s$ satisfies
 \begin{equation}\left\{\begin{array}{l}\label{Dx}
\displaystyle
d\partial_{x} Y^{t,x,\mu,y}_s=[\partial_x f(t,x,\mu,Y^{t,x,\mu,y}_{s})+\partial_y f(t,x,\mu,Y^{t,x,\mu,y}_{s})\partial_{x} Y^{t,x,\mu,y}_s]ds\\
\quad\quad\quad\quad\quad\quad+\left[\partial_{x} g(t, x,\mu,Y^{t,x,\mu,y}_s)+\partial_{y} g(t, x,\mu,Y^{t,x,\mu,y}_s)\partial_{x} Y^{t,x,\mu,y}_s\right]d\tilde{W}_{s}^{2},\\
\partial_{x} Y^{t,x,\mu,y}_0=0.\\
\end{array}\right.
\end{equation}
Under the assumptions \ref{A1}, it is easy to prove that
\begin{eqnarray}\label{EDx}
\sup_{t\in[0, T], s\geq 0, x\in\RR^n, y\in\RR^m, \mu\in\mathscr{P}_2}\tilde \EE\|\partial_{x} Y^{t,x,\mu,y}_s\|^4\leq C_T,
\end{eqnarray}
and by Lemma \ref{L3.6} and the boundedness of $\partial_{xy} f$, $\partial_{yy} f$, $\partial_{xy} g$ and $\partial_{yy} g$,  we have
\begin{eqnarray}\label{EDxy}
\sup_{t\in[0, T], x\in\RR^n, \mu\in\mathscr{P}_2}\tilde \EE\|\partial_x Y^{t,x,\mu,y_1}_s-\partial_x Y^{t,x,\mu,y_2}_s\|^2\leq C_T e^{-\frac{\beta s}{2}}|y_1-y_2|^2.
\end{eqnarray}
Then Lemma \ref{L3.6}, \eref{EDx} and \eref{EDxy} imply that there exists $\eta>0$ such that
\begin{eqnarray}
J_{12}\leq  C_T e^{-\eta s}(|y_1-y_2|+1).\label{J12}
\end{eqnarray}

By condition \eref{A42} and a similar arguments as in estimating $J_{12}$, we also have
\begin{eqnarray}
J_{13}\leq Ce^{-\eta s}(|y_1-y_2|+1).\label{J13}
\end{eqnarray}

By condition \eref{A431} and a straightforward computation,
\begin{eqnarray*}
J_{14}\leq\!\!\!\!\!\!\!\!&&\tilde \EE \left\|\partial^2_{yy} b(t, x, \mu, Y^{t,x,\mu,y_1}_s) \cdot (\partial_x Y^{t,x,\mu,y_1}_s, \partial_x Y^{t,x,\mu,y_1}_s)\right.\\
&&\quad\left.-\partial^2_{yy} b(t, x, \mu, Y^{t,x,\mu,y_2}_s) \cdot( \partial_x Y^{t,x,\mu,y_1}_s\, \partial_x Y^{t,x,\mu,y_1}_s)\right\|\\
&&+\tilde \EE \left\|\partial^2_{yy} b(t, x, \mu, Y^{t,x,\mu,y_2}_s) \cdot (\partial_x Y^{t,x,\mu,y_1}_s,  \partial_x Y^{t,x,\mu,y_1}_s)\right.\\
&&\quad\quad\left.-\partial^2_{yy} b(t, x, \mu, Y^{t,x,\mu,y_2}_s) \cdot (\partial_x Y^{t,x,\mu,y_2}_s, \partial_x Y^{t,x,\mu,y_2}_s)\right\|\\
\leq\!\!\!\!\!\!\!\!&&\tilde \EE \left[\|\partial^2_{yy} b(t, x, \mu, Y^{t,x,\mu,y_1}_s)-\partial^2_{yy} b(t, x, \mu, Y^{t,x,\mu,y_2}_s)\| \|\partial_x Y^{t,x,\mu,y_1}_s\|^2\right]\\
&&+\tilde \EE \left[\|\partial^2_{yy} b(t, x, \mu, Y^{t,x,\mu,y_2}_s)\| \|\partial_x Y^{t,x,\mu,y_1}_s-\partial_x Y^{t,x,\mu,y_2}_s\|(\|\partial_x Y^{t,x,\mu,y_1}_s\|+\|\partial_x Y^{t,x,\mu,y_2}_s\|)\right]\\
\leq\!\!\!\!\!\!\!\!&&C_T\left[\tilde \EE |Y^{t,x,\mu,y_1}_s-Y^{t,x,\mu,y_2}_s|^{2\gamma_2}\right]^{1/2} \left[\tilde \EE\|\partial_x Y^{t,x,\mu,y_1}_s\|^4\right]^{1/2}\\
&&+C_T\left[\tilde \EE\|\partial_x Y^{t,x,\mu,y_1}_s-\partial_x Y^{t,x,\mu,y_2}_s\|^2\right]^{1/2}\left[\tilde \EE\left(\|\partial_x Y^{t,x,\mu,y_1}_s\|^2+\|\partial_x Y^{t,x,\mu,y_2}_s\|^2\right)\right]^{1/2}.
\end{eqnarray*}
Then by Lemma \ref{L3.6}, \eref{EDx} and \eref{EDxy}, we get
\begin{eqnarray}
J_{14}\leq  C_T e^{-\eta s}(|y_1-y_2|+1).\label{J14}
\end{eqnarray}

Under the assumptions \ref{A1}-\ref{A3}, it is easy to prove that
\begin{eqnarray}\label{EDxx}
\sup_{t\in[0, T], s\geq 0, x\in\RR^n, y\in\RR^m, \mu\in\mathscr{P}_2}\tilde \EE\|\partial^2_{xx} Y^{t,x,\mu,y}_s\|^2\leq C_T
\end{eqnarray}
and
\begin{eqnarray*}\label{EDxxy}
\sup_{t\in[0, T], x\in\RR^n, \mu\in\mathscr{P}_2}\tilde \EE\|\partial^2_{xx} Y^{t,x,\mu,y_1}_s-\partial^2_{xx} Y^{t,x,\mu,y_2}_s\|^2\leq C_T e^{-2\eta s}|y_1-y_2|^{2\gamma_2}.
\end{eqnarray*}
Then, we get
\begin{eqnarray}
J_{15}\leq C_T e^{-\eta s}|y_1-y_2|^{\gamma_2}.\label{J15}
\end{eqnarray}

Hence, by \eref{J1}, \eref{J11}, \eref{J12}, \eref{J13}, \eref{J14} and \eref{J15} we obtain
\begin{eqnarray*}
J_1\leq Ce^{-\eta s}(\tilde \EE|y-Y^{t,x,\mu,y}_{s_0}|+1)\leq C_Te^{-\eta s}\left\{1+|x|+|y|+[\mu(|\cdot|^2)]^{1/2}\right\}.
\end{eqnarray*}

(ii) For the term $J_2$, note that
\begin{eqnarray*}
\partial^2_{xy}\hat b(t, x, \mu, y, s)=\!\!\!\!\!\!\!\!&& \partial_{y}\tilde \EE \left[\partial_x b(t, x, \mu, Y^{t,x,\mu, y}_{s})\right]+\partial_{y}\tilde\EE\left[ \partial_{y}b(t,x,\mu,Y^{t,x,\mu,y}_{s})\cdot\partial_{x}Y^{t,x,\mu,y}_s \right]\nonumber\\
=\!\!\!\!\!\!\!\!&&\tilde \EE \left[ \partial^2_{xy}b(t, x, \mu, Y^{t,x,\mu,y}_s)\partial_{y}Y^{t,x,\mu,y}_s\right]+\tilde\EE\left[ \partial^2_{yy}b(t,x,\mu,Y^{t,x,\mu,y}_{s})\cdot(\partial_{x}Y^{t,x,\mu,y}_s, \partial_{y}Y^{t,x,\mu,y}_s)\right]\\
&&+\tilde \EE \left[ \partial_{y}b(t, x, \mu, Y^{t,x,\mu,y}_s)\partial^2_{xy}Y^{t,x,\mu,y}_s\right].
\end{eqnarray*}
Lemma \ref{L3.6} and \eref{EDxy} imply
$$
\sup_{t\in[0, T], x\in\RR^n, \mu\in\mathscr{P}_2,y\in\RR^m}\left(\tilde \EE\|\partial_{y}Y^{t,x,\mu,y}_s\|^2+\EE\|\partial^2_{xy}Y^{t,x,\mu,y}_s\|^2\right)\leq C_Te^{-\frac{\beta s}{2}}.
$$
Hence we have
$$
\sup_{t\in[0, T], x\in\RR^n, \mu\in\mathscr{P}_2,y\in\RR^m}\|\partial^2_{xy}\hat b(t, x, \mu, y, s)\|\leq C_Te^{-\frac{\beta s}{4}}.
$$
Hence, it is easy to see that
\begin{eqnarray*}
J_2\leq\!\!\!\!\!\!\!\!&& C_Te^{-\frac{\beta s}{4}}\tilde \EE\|\partial_{x} Y^{t,x,\mu,y}_{s_0}\|\leq C_T e^{\frac{-\beta s}{4}}.
\end{eqnarray*}

(iii) For the term $J_3$, by a similar argument as in (ii), we have
$$
\sup_{t\in[0, T], x\in\RR^n, \mu\in\mathscr{P}_2,y\in\RR^m}\|\partial^2_{yx}\hat b(t, x, \mu, y, s)\|\leq C_Te^{-\frac{\beta s}{4}}
$$
and
$$
\sup_{t\in[0, T], x\in\RR^n, \mu\in\mathscr{P}_2,y\in\RR^m}\|\partial^2_{yy}\hat b(t, x, \mu, y, s)\|\leq C_Te^{-\frac{\beta s}{4}}.
$$
Hence, it is easy to see that
\begin{eqnarray*}
J_3\leq C_T e^{\frac{-\beta s}{4}}.
\end{eqnarray*}

(iv) For the term $J_4$, by estimates \eref{EDxx} and \eref{S1},
we easily get
\begin{eqnarray*}
J_4\leq C_T e^{\frac{-\beta s}{4}}.
\end{eqnarray*}
Hence, combining (i)-(iv), we prove estimate \eref{E21}.

\vspace{0.3cm}
\textbf{Step 3.}  In this step, we intend to prove estimate \eref{E22}. Recall that

\begin{eqnarray*}
\partial_{\mu}\tilde b_{s_0}(t, x, \mu, y, s)(z)=\!\!\!\!\!\!\!\!&& \partial_{\mu} \hat b(t, x, \mu, y, s)(z)-\tilde \EE \partial_{\mu}\hat b(t, x, \mu, Y^{t,x,\mu,y}_{s_0},s)(z)\nonumber\\
&&-\tilde \EE \left[\langle \partial_y\hat b(t, x, \mu, Y^{t,x,\mu, y}_{s_0},s) , \partial_{\mu} Y^{t,x,\mu,y}_{s_0}(z)\rangle \right].
\end{eqnarray*}
So we have
\begin{eqnarray*}
\partial_{z}\partial_{\mu}\tilde b_{s_0}(t, x, \mu, y, s)(z)=\!\!\!\!\!\!\!\!&& \partial_{z}\partial_{\mu} \hat b(t, x, \mu, y, s)(z)-\tilde \EE \partial_{z}\partial_{\mu}\hat b(t, x, \mu, Y^{t,x,\mu,y}_{s_0},s)(z)\nonumber\\
&&-\tilde \EE \left[\langle \partial_y\hat b(t, x, \mu, Y^{t,x,\mu, y}_{s_0},s) , \partial_{z}\partial_{\mu} Y^{t,x,\mu,y}_{s_0}(z)\rangle \right],
\end{eqnarray*}
where $\partial_z\partial_{\mu}Y^{t,x,\mu,y}_s(z)$ satisfies
 \begin{equation}\left\{\begin{array}{l}\label{V}
\displaystyle
d\partial_{z}\partial_{\mu} Y^{t,x,\mu,y}_s(z)=\partial_{z}\partial_{\mu} f(t, x,\mu,Y^{t,x,\mu,y}_{s})(z)ds+\partial_y f(t,x,\mu,Y^{t,x,\mu,y}_{s})\partial_{z}\partial_{\mu} Y^{t,x,\mu,y}_s(z)ds\\
\quad\quad\quad\quad\quad\quad+\left[\partial_{z}\partial_{\mu} g(t, x,\mu,Y^{t,x,\mu,y}_s)(z)+\partial_{y} g(t, x,\mu,Y^{t,x,\mu,y}_s)\partial_{z}\partial_{\mu} Y^{t,x,\mu,y}_s(z)\right]d\tilde{W}_{s}^{2},\\
\partial_{z}\partial_{\mu} Y^{t,x,\mu,y}_s(z)=0.\\
\end{array}\right.
\end{equation}
Under the assumptions \ref{A1}-\ref{A3}, it is easy to prove that for any $T>0$, we have
\begin{eqnarray}
\sup_{t\in [0, T], s\geq 0, x\in\RR^n, y\in\RR^m, \mu\in\mathscr{P}_2}\tilde \EE\|\partial_{z}\partial_{\mu} Y^{t,x,\mu,y}_s\|^2_{L^2(\mu)}\leq C_T \label{Ezm}
\end{eqnarray}
and there exists $\eta>0$ such that
\begin{eqnarray}
\sup_{t\in [0, T], x\in\RR^n,\mu\in\mathscr{P}_2}\tilde\EE\|\partial_{z}\partial_{\mu} Y^{t,x,\mu,y_1}_s-\partial_{z}\partial_{\mu} Y^{t,x,\mu,y_2}_s\|^2_{L^2(\mu)}\leq\!\!\!\!\!\!\!\!&& C_T e^{-2\eta s}|y_1-y_2|^{2\gamma_2}.\label{Ezmy}
\end{eqnarray}
Then we have
\begin{eqnarray*}
&&\|\partial_{z}\partial_{\mu}\hat b(t, x, \mu, y_1, s)-\partial_{z}\partial_{\mu}\hat b(t, x, \mu, y_2, s)\|_{L^2(\mu)}\\
=\!\!\!\!\!\!\!\!&& \|\partial_{z} \partial_{\mu} \tilde\EE b(t, x, \mu, Y^{t,x,\mu, y_1}_s)-\partial_{z}\partial_{\mu}\tilde\EE b(t, x, \mu, Y^{t,x,\mu,y_2}_s)\|_{L^2(\mu)}\nonumber\\
\leq\!\!\!\!\!\!\!\!&& \tilde\EE \left\|\partial_{z}\partial_{\mu} b(t, x, \mu, Y^{t,x,\mu,y_1}_s)-\partial_{z}\partial_{\mu} b(t, x, \mu,Y^{t,x,\mu,y_2}_s)\right\|_{L^2(\mu)}\\
&&+\EE \left\|\partial_y b(t, x, \mu, Y^{t,x,\mu,y_1}_s)\partial_{z}\partial_{\mu} Y^{t,x,\mu,y_1}_s(z)-\partial_y b(t, x, \mu, Y^{t,x,\mu,y_2}_s)\partial_{z}\partial_{\mu} Y^{t,x,\mu,y_2}_s\right\|_{L^2(\mu)}\\
\leq\!\!\!\!\!\!\!\!&& \tilde\EE \left\|\partial_{z}\partial_{\mu} b(t, x, \mu, Y^{t,x,\mu, y_1}_s)- \partial_{z}\partial_{\mu} b(t, x, \mu, Y^{t,x,\mu,y_2}_s)\right\|_{L^2(\mu)}\\
&&+\tilde\EE \left\|\partial_y b(t, x, \mu, Y^{t,x,\mu, y_1}_s)\partial_{z}\partial_{\mu} Y^{t,x,\mu,y_1}_s-\partial_y b(t, x, \mu, Y^{t,x,\mu,y_2}_s)\partial_{z}\partial_{\mu} Y^{t,x,\mu,y_1}_s\right\|_{L^2(\mu)}\\
&&+\tilde\EE \left\|\partial_y b(t, x, \mu, Y^{t,x,\mu,y_2}_s)\partial_{z}\partial_{\mu} Y^{t,x,\mu,y_1}_s-\partial_y b(t, x, \mu, Y^{t,x,\mu,y_2}_s)\partial_{z}\partial_{\mu} Y^{t,x,\mu,y_2}_s\right\|_{L^2(\mu)}\\
:=\!\!\!\!\!\!\!\!&&\sum^{3}_{i=1}K_i.
\end{eqnarray*}

For the terms $K_1$ and $K_2$, it follows from condition \eref{A44} that
\begin{eqnarray}
K_1\leq C_T\tilde\EE|Y^{t,x,\mu,y_1}_s-Y^{t,x,\mu,y_2}_s|^{\gamma_2}\leq C_T e^{-\eta s}|y_1-y_2|^{\gamma_2}\label{K1}
\end{eqnarray}
and by \eref{Ezm}
\begin{eqnarray}
K_2\leq\!\!\!\!\!\!\!\!&&C_T\tilde\EE\|(Y^{t,x,\mu,y_1}_s-Y^{t,x,\mu,y_2}_s)\partial_{z}\partial_{\mu} Y^{t,x,\mu,y_1}_s\|_{L^2(\mu)}\nonumber\\
\leq\!\!\!\!\!\!\!\!&& C_T\left[\tilde\EE|Y^{t,x,\mu,y_1}_s-Y^{t,x,\mu,y_2}_s|^2\right]^{1/2}\left[\tilde \EE\|\partial_{z}\partial_{\mu} Y^{t,x,\mu,y_1}_s\|^2_{L^2(\mu)}\right]^{1/2}\nonumber\\
\leq\!\!\!\!\!\!\!\!&& C_T e^{\frac{-\beta s}{2}}|y_1-y_2|.\label{K2}
\end{eqnarray}

For the term $K_3$, by \eref{Ezmy}, it is easy to see that
\begin{eqnarray}
K_3\leq\!\!\!\!\!\!\!\!&&C_T\tilde\EE\|\partial_{z}\partial_{\mu} Y^{t,x,\mu,y_1}_s-\partial_{z}\partial_{\mu} Y^{t,x,\mu,y_2}_s\|_{L^2(\mu)}\nonumber\\
\leq\!\!\!\!\!\!\!\!&& C_T e^{-\eta s}|y_1-y_2|^{\gamma_2}.\label{K3}
\end{eqnarray}

Therefore, estimates \eref{K1} to \eref{K3} imply
\begin{eqnarray*}
\|\partial_{z}\partial_{\mu}\hat b(t, x, \mu, y_1, s)-\partial_{z}\partial_{\mu}\hat b(t, x, \mu, y_2, s)\|_{L^2(\mu)}\leq C_T e^{-\eta s}(|y_1-y_2|+1).
\end{eqnarray*}
Hence, we finally have
\begin{eqnarray*}
\|\partial_{z}\partial_{\mu}\tilde b_{s_0}(t, x, \mu, y, s)\|_{L^2(\mu)}\leq\!\!\!\!\!\!\!\!&& \|\partial_{z}\partial_{\mu} \hat b(t, x, \mu, y, s)(z)-\tilde \EE \partial_{z}\partial_{\mu}\hat b(t, x, \mu, Y^{t,x,\mu,y}_{s_0},s)(z)\|_{L^2(\mu)}\nonumber\\
&&+\tilde \EE \left[ \|\partial_y\hat b(t, x, \mu, Y^{t,x,\mu, y}_{s_0},s)\| \|\partial_{z}\partial_{\mu} Y^{t,x,\mu,y}_{s_0}\|_{L^2(\mu)}\right]\\
\leq\!\!\!\!\!\!\!\!&& C_T e^{-\eta s}(\tilde \EE |y-Y^{t,x,\mu,y}_{s_0}|+1)+C_T e^{-\eta s}\left[\tilde \EE\|\partial_{z}\partial_{\mu} Y^{t,x,\mu,y}_{s_0}\|^2_{L^2(\mu)}\right]^{1/2}\\
\leq\!\!\!\!\!\!\!\!&& C_T e^{-\eta s}\{1+|x|+|y|+[\mu(|\cdot|^2)]^{1/2}\},
\end{eqnarray*}
which completes the proof of estimate \eref{E22}.
\end{proof}

\vskip 0.2cm
\subsection{The Proof of Theorem \ref{main result 2}}
\begin{proof}
Note that
\begin{eqnarray*}
X_{t}^{\ep}-\bar{X}_{t}=\!\!\!\!\!\!\!\!&&\int_{0}^{t}\left[b(s, X_{s}^{\ep},\mathscr{L}_{X_{s}^{\ep}},Y_{s}^{\ep})-\bar{b}(s,\bar{X}_{s},\mathscr{L}_{\bar X_{s}})\right]ds\\
&&+\int_{0}^{t}\left[\sigma(s, X^{\ep}_{s},\mathscr{L}_{X_{s}^{\ep}})-\sigma(s, \bar{X}_{s},\mathscr{L}_{\bar X_{s}})\right]dW^{1}_s\\
=\!\!\!\!\!\!\!\!&&\int_{0}^{t}\left[b(s, X_{s}^{\ep},\mathscr{L}_{ X_{s}^{\ep}},Y_{s}^{\ep})-\bar{b}(s, X^{\ep}_{s},\mathscr{L}_{X^{\ep}_{s}})\right]ds\\
&&+\int_{0}^{t}\left[\bar{b}(s, X^{\ep}_{s},\mathscr{L}_{X^{\ep}_{s}})-\bar{b}(s, \bar{X}_{s},\mathscr{L}_{\bar{X}_{s}})\right]ds\\
&&+\int_{0}^{t}\left[\sigma(s, X^{\ep}_{s}, \mathscr{L}_{X^{\ep}_{s}})-\sigma(s, \bar{X}_{s},\mathscr{L}_{\bar X_{s}})\right]dW^{1}_s.
\end{eqnarray*}
Then it is easy to see that for any $t\in [0, T]$, we have
\begin{eqnarray*}
\sup_{t\in [0, T]}\EE|X_{t}^{\ep}-\bar{X}_{t}|^2\leq\!\!\!\!\!\!\!\!&&C\sup_{t\in[0,T]}\EE\left|\int_{0}^{t}b(s, X_{s}^{\ep},\mathscr{L}_{ X_{s}^{\ep}},Y_{s}^{\ep})-\bar{b}(s, X^{\ep}_{s},\mathscr{L}_{X^{\ep}_{s}})ds\right|^2\nonumber\\
&&+C_T\EE\int_{0}^{T}|X_{t}^{\ep}-\bar{X}_{t}|^2 dt.
\end{eqnarray*}
Then Grownall's inequality implies that
\begin{eqnarray}
\sup_{t\in [0, T]}\EE|X_{t}^{\ep}-\bar{X}_{t}|^2\leq\!\!\!\!\!\!\!\!&&C_T\sup_{t\in[0,T]}\EE\left|\int_{0}^{t}b(s, X_{s}^{\ep},\mathscr{L}_{ X_{s}^{\ep}},Y_{s}^{\ep})-\bar{b}(s, X^{\ep}_{s},\mathscr{L}_{X^{\ep}_{s}})ds\right|^2.\label{I3.10}
\end{eqnarray}

By Proposition \ref{P3.6}, there exists $\Phi(t,x,\mu,y)$ such that
$$-\mathscr{L}_{2}(t,x,\mu)\Phi(t,x,\mu,y)=b(t,x,\mu,y)-\bar{b}(t,x,\mu).$$
Then by It\^o's formula for a function which depends on measures (see \cite[Theorem 7.1]{BLPR}), we have
\begin{eqnarray*}
\Phi(t,X_{t}^{\ep},\mathscr{L}_{X^{\ep}_{t}},Y^{\ep}_{t})=\!\!\!\!\!\!\!\!&&\Phi(0,x,\delta_{x},y)+\int^t_0 \partial_t \Phi(s, X_{s}^{\ep},\mathscr{L}_{X^{\ep}_{s}},Y^{\ep}_{s})ds\\
&&+\int^t_0 \EE\left[b(s,X^{\ep}_s,\mathscr{L}_{ X^{\ep}_{s}}, Y^{\ep}_s)\partial_{\mu}\Phi(s,x,\mu,y)(X^{\ep}_s)\right]\mid_{x=X_{s}^{\ep},\mu=\mathscr{L}_{X^{\ep}_{s}},y=Y^{\ep}_{s}}ds\\
&&+\int^t_0 \frac{1}{2}\EE \text{Tr}\left[\sigma\sigma^{*}(s,X^{\ep}_s,\mathscr{L}_{ X^{\ep}_{s}})\partial_z\partial_{\mu}\Phi(s,x,\mu,y)(X^{\ep}_s)\right]\mid_{x=X_{s}^{\ep},\mu=\mathscr{L}_{X^{\ep}_{s}},y=Y^{\ep}_{s}}ds\\
&&+\int^t_0 \mathscr{L}_{1}(s,\mathscr{L}_{X^{\ep}_{s}},Y^{\ep}_{s})\Phi(s,X_{s}^{\ep},\mathscr{L}_{X^{\ep}_{s}},Y^{\ep}_{s})ds\\
&&+\frac{1}{\ep}\int^t_0 \mathscr{L}_{2}(s,X_{s}^{\ep},\mathscr{L}_{X^{\ep}_{s}})\Phi(s,X_{s}^{\ep},\mathscr{L}_{X^{\ep}_{s}},Y^{\ep}_{s})ds+M^{\ep,1}_t+\frac{1}{\sqrt{\ep}}M^{\ep,2}_t,
\end{eqnarray*}
where $\mathscr{L}_{1}(t,\mu,y)\Phi(t,x,\mu,y):=(\mathscr{L}_{1}(t,\mu,y)\Phi_1(t,x,\mu,y),\ldots, \mathscr{L}_{2}(t,\mu,y)\Phi_n(t,x,\mu,y))$ with
\begin{eqnarray*}
 \mathscr{L}_{1}(t,\mu,y)\Phi_k(t,x,\mu,y):=\!\!\!\!\!\!\!\!&&\langle b(t,x,\mu,y), \partial_x \Phi(t,x,\mu,y)\rangle\\
 &&+\frac{1}{2}\text{Tr}[\sigma\sigma^{*}(t,x,\mu)\partial^2_{xx} \Phi_k(t,x,\mu,y)],\quad k=1,\ldots, n,
\end{eqnarray*}
and $M^{\ep,1}_t, M^{\ep,2}_t$ are two martingales, which are defined by
\begin{eqnarray*}
&&M^{\ep,1}_t:=\int^t_0 \partial_x \Phi(s,X_{s}^{\ep},\mathscr{L}_{X^{\ep}_{s}})\cdot \sigma(s,X^{\ep}_s,\mathscr{L}_{ X^{\ep}_{s}}) dW^1_s;\\
&&M^{\ep,2}_t:=\int^t_0 \partial_y \Phi(s,X_{s}^{\ep},\mathscr{L}_{X^{\ep}_{s}})\cdot g(s,X^{\ep}_s,\mathscr{L}_{ X^{\ep}_{s}}, Y^{\ep}_s) dW^2_s.
\end{eqnarray*}
Then we have
\begin{eqnarray*}
&&\sup_{t\in[0,T]}\EE\left|\int_{0}^{t}b(s, X_{s}^{\ep},\mathscr{L}_{ X_{s}^{\ep}},Y_{s}^{\ep})-\bar{b}(s, X^{\ep}_{s},\mathscr{L}_{X^{\ep}_{s}})ds\right|^2\\
=\!\!\!\!\!\!\!\!&&\sup_{t\in[0,T]}\EE\left|\int^t_0\mathscr{L}_{2}(s,X_{s}^{\ep},\mathscr{L}_{X^{\ep}_{s}})\Phi(s,X_{s}^{\ep},\mathscr{L}_{X^{\ep}_{s}},Y^{\ep}_{s})ds\right|^2\\
\leq\!\!\!\!\!\!\!\!&&\ep^2\sup_{t\in[0, T]}\EE\left|\Phi(t,X_{t}^{\ep},\mathscr{L}_{X^{\ep}_{t}},Y^{\ep}_{t})-\Phi(0,x,\delta_{x},y)-\int^t_0 \partial_t \Phi(s, X_{s}^{\ep},\mathscr{L}_{X^{\ep}_{s}},Y^{\ep}_{s})ds\right.\\
&&-\int^t_0 \EE\left[b(s,X^{\ep}_s,\mathscr{L}_{ X^{\ep}_{s}}, Y^{\ep}_s)\partial_{\mu}\Phi(s,x,\mu,y)(X^{\ep}_s)\right]\mid_{x=X_{s}^{\ep},\mu=\mathscr{L}_{X^{\ep}_{s}},y=Y^{\ep}_{s}}ds\\
&&-\int^t_0 \EE \text{Tr}\left[\sigma\sigma^{*}(s,X^{\ep}_s,\mathscr{L}_{ X^{\ep}_{s}})\partial_z\partial_{\mu}\Phi(s,x,\mu,y)(X^{\ep}_s)\right]\mid_{x=X_{s}^{\ep},\mu=\mathscr{L}_{X^{\ep}_{s}},y=Y^{\ep}_{s}}ds\\
&&-\left.\int^t_0 \mathscr{L}_{1}(s,\mathscr{L}_{X^{\ep}_{s}},Y^{\ep}_{s})\Phi(s,X_{s}^{\ep},\mathscr{L}_{X^{\ep}_{s}},Y^{\ep}_{s})ds\right|^2\\
&&+\ep^2\sup_{t\in[0, T]}\EE\left|M^{\ep,1}_t\right|^2+\ep\sup_{t\in[0, T]}\EE\left|M^{\ep,2}_t\right|^2.
\end{eqnarray*}
By It\^o's isometry and estimates \eref{E1} and \eref{E2}, we finally get
\begin{eqnarray*}
\sup_{t\in[0,T]}\EE\left|\int_{0}^{t}b(s, X_{s}^{\ep},\mathscr{L}_{ X_{s}^{\ep}},Y_{s}^{\ep})-\bar{b}(s, X^{\ep}_{s},\mathscr{L}_{X^{\ep}_{s}})ds\right|^2
\leq\!\!\!\!\!\!\!\!&&C_T\ep\left[\sup_{t\in[0, T]}\EE|X_{t}^{\ep}|^4+\sup_{t\in[0, T]}\EE|Y^{\ep}_{t}|^4+1\right]\\
\leq\!\!\!\!\!\!\!\!&&C_T(1+|x|^4+|y|^4)\ep.
\end{eqnarray*}
This and \eref{I3.10} imply the assertion.
\end{proof}

\section{Example}

Here we give a simple example as an application of our results.

\begin{example} Let $b_0:\RR^n\times\RR^m\rightarrow \RR^n$, $f_0:\RR^n\times\RR^m\rightarrow \RR^m$ and satisfying the following conditions:

\vspace{0.2cm}
(1) The first-order partial derivatives $\partial_{x}b_0(x,y), \partial_{y}b_0(x,y), \partial_{x}f_0(x,y), \partial_{y}f_0(x,y)$ exist for any $x\in \RR^n, y\in\RR^m$. Moreover, all these first-order partial derivatives are bounded uniformly in $(x,y)$ and Lipschitz continuous $w.r.t.$ $y$ uniformly in $x$.

\vspace{0.2cm}
(2) There exists $\beta>0$ such that for any $x\in\RR^n$ and $y_1,y_2\in\RR^m$,
\begin{eqnarray*}
\langle f_0(x, y_1)-f_0(x,y_2), y_1-y_2\rangle\leq -\beta |y_1-y_2|^2;
\end{eqnarray*}

\vspace{0.2cm}
(3) The second-order partial derivatives $\partial^2_{xx} b_0(x,y)$, $\partial^2_{xy} b_0(x,y)$, $\partial^2_{xx} f_0(x,y)$ and $\partial^2_{xy} f_0(x,y)$ exist for any $x\in \RR^n, y\in\RR^m$. Moreover, all these second-order partial derivatives are bounded uniformly in $(x,y)$ and Lipschitz continuous $w.r.t.$ $y$ uniformly in $x$.

Now, let us consider the following slow-fast distribution dependent stochastic differential equations,
\begin{equation}\left\{\begin{array}{l}\label{EX1}
\displaystyle
d X^{\ep}_t = b( X^{\ep}_t,\mathscr{L}_{X^{\ep}_t},Y^{\ep}_t) dt+ d W^{1}_t,\quad X^{\ep}_0=x\in \RR^n\\
\displaystyle
d Y^{\ep}_t =\frac{1}{\ep} f( X^{\ep}_t,\mathscr{L}_{X^{\ep}_t},Y^{\ep}_t) dt+\frac{1}{\sqrt{\ep}}d W^{2}_t,\quad Y^{\ep}_0=y\in \RR^m,
\end{array}\right.
\end{equation}
where $\{W^{1}_t\}_{t\geq 0}$ and $\{W^{2}_t\}_{t\geq 0}$ are mutually independent $n-$ and $m-$ dimensional standard Brownian motions and
$$
b(x,\mu,y):=\int_{\RR^n}b_0(x+z,y)\mu(dz),\quad
f(x,\mu,y):=\int_{\RR^n}f_0(x+z,y)\mu(dz).
$$
Then we have
$$
\partial_{\mu}b(x,\mu,y)(\cdot)=\partial_x b_0(x+\cdot,y),\quad \partial_{z}\partial_{\mu}b(x,\mu,y)(z)=\partial^2_{xx} b_0(x+z,y)
$$
and
$$
\partial_{\mu}f(x,\mu,y)(\cdot)=\partial_x f_0(x+\cdot,y)\quad \partial_{z}\partial_{\mu}f(x,\mu,y)(z)=\partial^2_{xx} f_0(x+z,y).
$$

If the conditions (1) and (2) hold, it is easy to check that the coefficients above satisfy assumptions \ref{A1}-\ref{A2}. Hence, by Theorem \ref{main result 1}, we have
\begin{eqnarray*}
\sup_{t\in[0,T]}\mathbb{E}|X_{t}^{\ep}-\bar{X}_{t}|^2\leq C\ep^{2/3},
\end{eqnarray*}
where $\bar{X}$ solves the corresponding averaged equation.

If the conditions (1)-(3) hold, it is easy to check that the coefficients above satisfy assumptions \ref{A1}-\ref{A3}. Hence, by Theorem \ref{main result 2}, we have
\begin{eqnarray*}
\sup_{t\in[0,T]}\mathbb{E}|X_{t}^{\ep}-\bar{X}_{t}|^2\leq C\ep,
\end{eqnarray*}
where $\bar{X}$ solves the corresponding averaged equation.

\end{example}

\section{Appendix}

In this section, by using the result due to Wang  in  \cite{WFY}, we prove the existence and uniqueness of solutions to system (\ref{Equation}) and the corresponding averaged equation.

\subsection{Proof of Theorem \ref{main}}
\begin{proof}
We set
$$
Z^{\ep}_t:=\left(
                               \begin{array}{c}
                                 X^{\ep}_t \\
                                 Y^{\ep}_t \\
                               \end{array}
                             \right)
,\quad \tilde{b}^{\ep}(t,x,y,\tilde \mu):=\left(
                             \begin{array}{c}
                               b(t,x,\mu,y) \\
                               \frac{1}{\ep}f(t,x,\mu,y) \\
                             \end{array}
                           \right)
$$
and
$$
\tilde{\sigma}^{\ep}(t,x,y,\tilde \mu):=\left(
                                  \begin{array}{cc}
                                    \sigma(t,x,\mu) & 0 \\
                                    0 & \frac{1}{\sqrt{\ep}}g(t,x,\mu,y) \\
                                  \end{array}
                                \right)
, \quad W_t:=\left(
             \begin{array}{c}
               W^1_t \\
               W^2_t \\
             \end{array}
           \right).
$$
where $t\geq 0$, $x\in\RR^n$, $y\in\RR^m$, $\tilde \mu\in\mathscr{P}_{2}(\RR^{n+m})$ with its marginal distribution $\mu$ on $\RR^n$.
Then system \eref{Equation} can be rewritten as the following equation:
\begin{equation}
 dZ^{\ep}_t=\tilde{b}^{\ep}(t, Z^{\ep}_t, \mathscr{L}_{Z^{\ep}_t})dt+\tilde{\sigma}^{\ep}(t, Z^{\ep}_t,\mathscr{L}_{Z^{\ep}_t})d W_t,\quad
Z^{\ep}_{0}=\left(
                      \begin{array}{c}
                        x \\
                        y \\
                      \end{array}
                    \right)
.\label{Eq2}
\end{equation}
Under the assumption \ref{A1}, we intend to prove that the coefficients in equation \eref{Eq2} satisfy Lipschitz and linear growth conditions, uniformly $w.r.t.$ $t\in[0, T]$.

In fact, for  $T>0$, and any $z_i=(x_i, y_i)\in \RR^{n+m}$, $\tilde \mu_i\in\mathscr{P}_{2}(\RR^{n+m})$ with its marginal distributions $\mu_i$ on $\RR^n$, $i=1,2$, $t\in [0,T]$
\begin{eqnarray*}
&&|\tilde{b}^{\ep}(t,z_1,\tilde \mu_1)-\tilde{b}^{\ep}(t,z_2,\tilde \mu_2)|+\|\tilde{\sigma}^{\ep}(t,z_1,\tilde \mu_1)-\tilde{\sigma}^{\ep}(t,z_2,\tilde \mu_2)\|\\
\leq\!\!\!\!\!\!\!\!&&|b(t,x_1,\mu_1, y_1)-b(t,x_2, \mu_2, y_2)|+\|\sigma(t,x_1,\mu_1)-\sigma(t,x_2,\mu_2)\|\\
&&+\frac{1}{\ep}|f(t,x_1, \mu_1,y_1)-f(t,x_2, \mu_2,y_2)|+\frac{1}{\ep}\|g(t,x_1,\mu_1, y_1)-g(t,x_2, \mu_2,y_2)\|\\
\leq\!\!\!\!\!\!\!\!&& C_T\left(1+\frac{1}{\ep}\right)\big[|x_1-x_2|+|y_1-y_2|+\mathbb{W}_2(\mu_1,\mu_2)\big]\\
\leq\!\!\!\!\!\!\!\!&& C_T\left(1+\frac{1}{\ep}\right)\big[|z_1-z_2|+\mathbb{W}_2(\tilde \mu_1,\tilde \mu_2)\big].
\end{eqnarray*}
Furthermore,
\begin{eqnarray*}
&&|\tilde{b}^{\ep}(t, z_1,\tilde \mu_1)|+\|\tilde{\sigma}^{\ep}(t, z_1,\tilde \mu_1)\|\\
\leq\!\!\!\!\!\!\!\!&&|b(t, x_1, \mu_1, y_1)|+\|\sigma(t, x_1,\mu_1)\|+\frac{1}{\ep}|f(t,x_1,\mu_1, y_1)|+\frac{1}{\ep}\|g(t,x_1,\mu_1, y_1)\|\\
\leq\!\!\!\!\!\!\!\!&&C_T\left(1+\frac{1}{\ep}\right)\big[1+|x_1|+|y_1|+\mu_1(|\cdot|^2)\big]\\
\leq\!\!\!\!\!\!\!\!&&C_T\left(1+\frac{1}{\ep}\right)\big[1+|z_1|+\tilde \mu_1(|\cdot|^2)\big].
\end{eqnarray*}
Hence by \cite[Theorem 4.1]{WFY}, there exists a unique solution $\{(X^{\ep}_t,Y^{\ep}_t), t\geq 0\}$ to system (\ref{Equation}).
The proof is complete.
\end{proof}

\subsection{Proof of Lemma \ref{PMA}}

\begin{proof}
We first check that the coefficients of Eq. (\ref{3.1}) satisfy the following condition:

For any $T>0$, there exists $C_T>0$ such that for any $t_i\in [0, T]$, $x_i\in\RR^n$, $\mu_i\in\mathscr{P}_2$, $i=1,2$,
\begin{eqnarray}
&&|\bar{b}(t_1,x_1,\mu_1)-\bar{b}(t_2,x_2,\mu_2)|+\|\sigma(t_1,x_1,\mu_1)-\sigma(t_2,x_2,\mu_2)\|\nonumber\\
\leq\!\!\!\!\!\!\!\!&& C_T\left[|t_1-t_2|+|x_1-x_2|+\mathbb{W}_2(\mu_1,\mu_2)\right].\label{4.16}
\end{eqnarray}

Indeed, by Proposition \ref{Rem 4.1} and Lemma \ref{L3.6}, for any $s>0$, we have
\begin{eqnarray*}
&&|\bar{b}(t_1,x_1,\mu_1)-\bar{b}(t_2,x_2,\mu_2)|+\|\sigma(t_1,x_1,\mu_1)-\sigma(t_2,x_2,\mu_2)\|\\
\leq\!\!\!\!\!\!\!\!&&\left|\bar b(t_1,x_1,\mu_1)-\tilde\EE b(t_1,x_1,\mu_1, Y^{t_1,x_1,\mu_1,0}_s)\right|+\left|\tilde \EE b(t_2,x_2, \mu_2,Y^{t_2,x_2,\mu_2,0}_s)-\bar b(t_2,x_2,\mu_2,)\right|\\
&&+\tilde \EE \left|b(t_1,x_1,\mu_1, Y^{t_1,x_1,\mu_1,0}_s)-b(t_2,x_2, \mu_2,Y^{t_2,x_2,\mu_2,0}_s)\right|+\|\sigma(t_1,x_1,\mu_1)-\sigma(t_2,x_2,\mu_2)\|\\
\leq\!\!\!\!\!\!\!\!&&C_Te^{-\frac{\beta s}{2}}\left(1+|x_1|+|x_2|+[\mu_1(|\cdot|^2)]^{1/2}+[\mu_2(|\cdot|^2)]^{1/2}\right)\\
&&+C_T\left\{|t_1-t_2|+|x_1-x_2|+\tilde \EE|Y^{t_1,x_1,\mu_1,0}_s-Y^{t_2,x_2,\mu_2,0}_s|+\mathbb{W}_2(\mu_1,\mu_2)\right\}\\
\leq\!\!\!\!\!\!\!\!&&C_Te^{-\frac{\beta s}{2}}\left\{1+|x_1|+|x_2|+[\mu_1(|\cdot|^2)]^{1/2}+[\mu_2(|\cdot|^2)]^{1/2}\right\}\\
&&+C_T\left[|t_1-t_2|+|x_1-x_2|+\mathbb{W}_2(\mu_1,\mu_2)\right].
\end{eqnarray*}
Then  \eref{4.16} follows by letting $s\rightarrow \infty$. Moreover, the estimate \eref{4.16} implies
\begin{eqnarray}
|\bar{b}(t_1,x_1,\mu_1)|+\|\sigma(t_1, x_1,\mu_1)\|\leq C_T\left\{1+|x_1|+[\mu_1(|\cdot|^2)]^{1/2}\right\}.\label{4.17}
\end{eqnarray}

Hence by \cite[Theorem 4.1]{WFY}, there exists a unique solution $\{\bar X_t, t\geq 0\}$ to Eq. (\ref{3.1}) and (\ref{3.9}) can be easily obtained by following the same arguments as in the proof of Lemma \ref{PMY}. The proof is complete.
\end{proof}

\subsection{Proof of \eref{tilde bt}-\eref{tilde bmu}}\label{sub6.3}
\begin{proof}
We here only prove \eref{tilde bmu}. \eref{tilde bt} and \eref{tilde bx} can be proved by the same procedure.
For any $s_0>0$, we define
\begin{eqnarray*}
\tilde b_{s_0}(t, x, \mu, y, s):=\hat b(t, x, \mu, y, s)-\hat b(t, x, \mu, y, s+s_0),
\end{eqnarray*}
where $\hat b(t, x, \mu, y, s):=\tilde \EE b(t,x, \mu, Y^{t,x,\mu, y}_s)$. The Proposition \ref{Rem 4.1} implies that
$$
\lim_{s_0\rightarrow \infty} \tilde b_{s_0}(t, x, \mu, y, s)=\tilde b(t,x, \mu, y,s).
$$
As a result, in order to prove \eref{tilde bmu}, it suffices to show there exists $\eta>0$ such that for any $s_0>0$, $t\in[0,T], s\geq 0$, $x\in\RR^n$, $y\in\RR^m$ and $\mu_1,\mu_2\in\mathscr{P}_2$,
\begin{eqnarray*}
&&|\tilde b_{s_0}(t, x, \mu_1, y, s)-\tilde b_{s_0}(t, x, \mu_2,y, s)|\nonumber\\
\leq\!\!\!\!\!\!\!\!&&C_T\mathbb{W}_2(\mu_1,\mu_2)e^{-\eta s}\left\{1+|x|^{\gamma_1}+|y|^{\gamma_1}+[\mu_1(|\cdot|^2)]^{\gamma_1/2}+[\mu_2(|\cdot|^2)]^{\gamma_1/2}\right\},
\end{eqnarray*}
which can be obtained by
\begin{eqnarray}
\sup_{t\in [0, T]}\|\partial_{\mu}\tilde b_{s_0}(t, x, \mu, y, s)\|_{L^2(\mu)}\leq C_Te^{-\eta s}\left\{1+|x|^{\gamma_1}+|y|^{\gamma_1}+[\mu(|\cdot|^2)]^{\gamma_1/2}\right\}. \label{tilde b0}
\end{eqnarray}

Indeed, by the Markov property,
\begin{eqnarray}
\tilde b_{s_0}(t, x, \mu, y, s)=\!\!\!\!\!\!\!\!&& \hat b(t, x, \mu, y, s)-\tilde \EE b(t, x, \mu, Y^{t,x,\mu,y}_{s+s_0})\nonumber\\
=\!\!\!\!\!\!\!\!&& \hat b(t, x, \mu, y, s)-\tilde \EE \{\tilde \EE[b(t, x, \mu, Y^{t,x,\mu,y}_{s+s_0})|\tilde{\mathscr{F}}_{s_0}]\}\nonumber\\
=\!\!\!\!\!\!\!\!&& \hat b(t, x, \mu, y, s)-\tilde \EE \hat b(t, x, \mu, Y^{t,x,\mu,y}_{s_0},s).\label{MP}
\end{eqnarray}
Then we obtain
\begin{eqnarray}
\partial_{\mu}\tilde b_{s_0}(t, x, \mu, y, s)=\!\!\!\!\!\!\!\!&& \partial_{\mu} \hat b(t, x, \mu, y, s)-\tilde \EE \partial_{\mu}\hat b(t, x, \mu, Y^{t,x,\mu,y}_{s_0},s)\nonumber\\
&&-\tilde \EE \left[\langle \partial_y\hat b(t, x, \mu, Y^{t,x,\mu, y}_{s_0},s) , \partial_{\mu} Y^{t,x,\mu,y}_{s_0}\rangle \right].\label{5.8}
\end{eqnarray}
Next, we intend to prove the following two statements.
\begin{itemize}
\item{For any $t\in[0, T], s\geq 0$, $x\in\RR^n$, $y\in\RR^m$ and $\mu\in\mathscr{P}_2$,
\begin{eqnarray}
\|\partial_y\hat b(t, x, \mu, y, s)\|\leq C_T e^{-\frac{\beta s}{2}}.\label{S1}
\end{eqnarray}}
\item{For any $t\in[0,T], s\geq 0$, $x\in\RR^n$, $y_1,y_2\in\RR^m$ and $\mu\in\mathscr{P}_2$,
\begin{eqnarray}
\|\partial_{\mu}\hat b(t, x, \mu, y_1, s)-\partial_{\mu}\hat b(t, x, \mu, y_2, s)\|_{L^2(\mu)}\leq C_T e^{-\eta s}|y_1-y_2|.\label{S2}
\end{eqnarray}
}
\end{itemize}

For the first statement, by Lemma \ref{L3.6},
\begin{eqnarray*}
|\hat b(t, x, \mu, y_1, s)-\hat b(t, x, \mu, y_2, s)|=\!\!\!\!\!\!\!\!&& |\tilde \EE b(t, x, \mu, Y^{t,x,\mu,y_1}_s)-\tilde \EE  b(t, x, \mu,Y^{t,x,\mu,y_2}_s)|\nonumber\\
\leq\!\!\!\!\!\!\!\!&& C_T\tilde\EE|Y^{t,x,\mu,y_1}_s-Y^{t,x,\mu,y_2}_s|\nonumber\\
\leq\!\!\!\!\!\!\!\!&&C_T e^{-\frac{\beta s}{2}}|y_1-y_2|,
\end{eqnarray*}
which implies \eref{S1}.

For the second statement, the assumptions \ref{A1} and \ref{A2} imply $Y^{t,x,\mu,y}_s$ that is differentiable $w.r.t$ $\mu$ and its derivative $\partial_{\mu} Y^{t,x,\mu,y}_s(z)$  satisfies
 \begin{equation}\left\{\begin{array}{l}\label{V}
\displaystyle
d\partial_{\mu} Y^{t,x,\mu,y}_s(z)=\partial_{\mu} f(t, x,\mu,Y^{t,x,\mu,y}_{s})(z)ds+\partial_y f(t,x,\mu,Y^{t,x,\mu,y}_{s})\partial_{\mu} Y^{t,x,\mu,y}_s(z)ds\\
\quad\quad\quad\quad\quad\quad+\left[\partial_{\mu} g(t, x,\mu,Y^{t,x,\mu,y}_s)(z)+\partial_{y} g(t, x,\mu,Y^{t,x,\mu,y}_s)\partial_{\mu} Y^{t,x,\mu,y}_s(z)\right]d\tilde{W}_{s}^{2},\\
\partial_{\mu} Y^{t,x,\mu,y}_s(z)=0.\\
\end{array}\right.
\end{equation}
Moreover, it is easy to see that for any $T>0$,  there exists $C_T$ such that
$$
\sup_{t\in[0, T], s\geq 0, x\in\RR^n, y\in\RR^m, \mu\in\mathscr{P}_2}\tilde \EE\|\partial_{\mu} Y^{t,x,\mu,y}_s\|^2_{L^2(\mu)}\leq C_T.
$$
Then we have
\begin{eqnarray*}
&&\|\partial_{\mu}\hat b(t, x, \mu, y_1, s)-\partial_{\mu}\hat b(t, x, \mu, y_2, s)\|_{L^2(\mu)}\\
=\!\!\!\!\!\!\!\!&& \| \partial_{\mu} \tilde\EE b(t, x, \mu, Y^{t,x,\mu, y_1}_s)-\partial_{\mu}\tilde\EE b(t, x, \mu, Y^{t,x,\mu,y_2}_s)\|_{L^2(\mu)}\nonumber\\
\leq\!\!\!\!\!\!\!\!&& \tilde\EE \left\|\partial_{\mu} b(t, x, \mu, Y^{t,x,\mu,y_1}_s)-\partial_{\mu} b(t, x, \mu,Y^{t,x,\mu,y_2}_s)\right\|_{L^2(\mu)}\\
&&+\EE \left\|\partial_y b(t, x, \mu, Y^{t,x,\mu,y_1}_s)\partial_{\mu} Y^{t,x,\mu,y_1}_s-\partial_y b(t, x, \mu, Y^{t,x,\mu,y_2}_s)\partial_{\mu} Y^{t,x,\mu,y_2}_s\right\|_{L^2(\mu)}\\
\leq\!\!\!\!\!\!\!\!&& \tilde\EE \left\|\partial_{\mu} b(t, x, \mu, Y^{t,x,\mu, y_1}_s)- \partial_{\mu} b(t, x, \mu, Y^{t,x,\mu,y_2}_s)\right\|_{L^2(\mu)}\\
&&+\tilde\EE \left\|\partial_y b(t, x, \mu, Y^{t,x,\mu, y_1}_s)\partial_{\mu} Y^{t,x,\mu,y_1}_s-\partial_y b(t, x, \mu, Y^{t,x,\mu,y_2}_s)\partial_{\mu} Y^{t,x,\mu,y_1}_s\right\|_{L^2(\mu)}\\
&&+\tilde\EE \left\|\partial_y b(t, x, \mu, Y^{t,x,\mu,y_2}_s)\partial_{\mu} Y^{t,x,\mu,y_1}_s-\partial_y b(t, x, \mu, Y^{t,x,\mu,y_2}_s)\partial_{\mu} Y^{t,x,\mu,y_2}_s\right\|_{L^2(\mu)}\\
:=\!\!\!\!\!\!\!\!&&\sum^{3}_{i=1}S_i.
\end{eqnarray*}
For the terms $S_1$ and $S_2$, it follows from conditions \eref{A33}, \eref{A34} and Lemma \ref{L3.6} that there exists $\eta>0$ such that
\begin{eqnarray}
S_1\leq C_T\tilde\EE|Y^{t,x,\mu,y_1}_s-Y^{t,x,\mu,y_2}_s|^{\gamma_1}\leq C_T e^{-\eta s}|y_1-y_2|^{\gamma_1}\label{S_1}
\end{eqnarray}
and
\begin{eqnarray}
S_2\leq\!\!\!\!\!\!\!\!&& C_T\left[\tilde\EE|Y^{t,x,\mu,y_1}_s-Y^{t,x,\mu,y_2}_s|^{2\gamma_1}\right]^{1/2}\left[\tilde \EE\|\partial_{\mu} Y^{t,x,\mu,y_1}_s\|^2_{L^2(\mu)}\right]^{1/2}\nonumber\\
\leq\!\!\!\!\!\!\!\!&& C_T e^{-\eta s}|y_1-y_2|^{\gamma_1}.\label{S_2}
\end{eqnarray}
For the term $S_3$, by a straightforward computer, we obtain that
\begin{eqnarray*}
\tilde\EE\|\partial_{\mu} Y^{t,x,\mu,y_1}_s-\partial_{\mu} Y^{t,x,\mu,y_2}_s\|^2_{L^2(\mu)}\leq\!\!\!\!\!\!\!\!&& C_T e^{-\frac{\beta s}{2}}|y_1-y_2|^{2\gamma_1},
\end{eqnarray*}
which implies
\begin{eqnarray}
S_3\leq C_T\tilde\EE\|\partial_{\mu} Y^{t,x,\mu,y_1}_s-\partial_{\mu} Y^{t,x,\mu,y_2}_s\|_{L^2(\mu)}
\leq C_T e^{\frac{-\beta s}{4}}|y_1-y_2|^{\gamma_1}.\label{S_3}
\end{eqnarray}
Therefore, estimates \eref{S_1} to \eref{S_3} imply \eref{S2}.

Finally, by estimates  \eref{5.8}, \eref{S1} and \eref{S2},  there exists $\eta>0$  such that
\begin{eqnarray*}
\|\partial_{\mu}\tilde b_{s_0}(t, x, \mu, y, s)\|_{L^2(\mu)}\leq\!\!\!\!\!\!\!\!&& Ce^{-\eta s}\tilde \EE|y-Y^{t,x,\mu,y}_{s_0}|^{\gamma_1}+Ce^{-\eta t}\\\leq\!\!\!\!\!\!\!\!&&C_Te^{-\eta s}\left\{1+|x|^{\gamma_1}+|y|^{\gamma_1}+[\mu(|\cdot|^2)]^{\gamma_1/2}\right\},
\end{eqnarray*}
which proves \eref{tilde b0}. The proof is complete.

\end{proof}
\vskip 0.2cm

\vskip 0.5cm
\textbf{Acknowledge}.
X. Sun is supported by the NNSF of China (No. 11601196) and NSF of Jiangsu Province (No. BK20160004); Y. Xie is supported by the NNSF of China (No. 11771187,11931004). The PAPD of Jiangsu Higher Education Institutions and financial support by the DFG through CRC 1283 are also gratefully acknowledged.

\end{document}